\documentclass[11pt]{article}
\usepackage[a4paper,margin=3cm]{geometry}
\usepackage[utf8]{inputenc} 
\usepackage[T1]{fontenc} 
\usepackage{lmodern}
\usepackage[autolanguage]{numprint} 
\usepackage{hyperref} 
\usepackage{graphicx} 
\usepackage[all]{xy}
\usepackage{amsmath,amssymb,amsthm}



\newcommand\NN{\mathbb{N}} 
\newcommand\RR{\mathbb{R}} 
\newcommand\CC{\mathbb{C}} 
\newcommand\PP{\mathbb{P}} 


\newtheorem{theorem}{Theorem}[section]%
\newtheorem{proposition}[theorem]{Proposition}%
\newtheorem{lemma}[theorem]{Lemma}%

\newtheorem{definition}[theorem]{Definition}%
\newtheorem{remark}[theorem]{Remark}%

\author{Raphaël Butez\footnote{CEREMADE, Université Paris Dauphine, butez@ceremade.dauphine.fr}}


\title{Large deviations for the empirical measure of random polynomials: revisit of the Zeitouni-Zelditch theorem.}


\begin{document} 

\maketitle 

\begin{abstract}
	This article revisits the work by Ofer Zeitouni and Steve Zelditch on large deviations for the empirical measures of random orthogonal polynomials with i.i.d. Gaussian complex coefficients, and extends this result to real Gaussian coefficients. This article does not require any knowledge in geometry. For clarity, we focus on two classical cases: Kac polynomials and elliptic polynomials.

\end{abstract}
\tableofcontents

\section{Introduction}
We study three different models of random polynomials, orthogonal polynomials, Kac polynomials, and elliptic polynomial, the two last being examples of orthogonal polynomials. The coefficients are i.i.d. random variables which can be either:

\begin{tabular}{l@{\hspace{2cm}}l}
Complex Gaussian coefficients & $a_k= b_k+i c_k$ where $\left(\begin{matrix} b_k \\ c_k \end{matrix}\right) \sim \mathcal{N}(0,\frac{1}{2}I_2)$ \\
	Real Gaussian coefficients & $a_k \sim \mathcal{N}(0,\frac{1}{2}). $\\
\end{tabular} 
\newline
\newline
We will always refer to those two possibilities as the complex and real case. In all the article, we assume that the $a_k$'s are independent. Given $(R_0,\dots,R_n)$ a basis of $\CC_n[X]$ we consider the random polynomials:
\begin{equation}
P_n= a_0R_0 + a_1R_1+ \dots+a_n R_n.
\end{equation}
In order to study the zeros $z_1, \dots ,z_n$ of the random polynomials $P_n$ we introduce their empirical measure:
\begin{equation}
\mu_n = \frac{1}{n} \sum_{i=1}^{n} \delta_{z_i}.  
\end{equation} 
We will focus on three classes of random polynomials corresponding to different choices of the polynomials $R_j$'s:
\begin{figure}[h!]
\centering
\begin{tabular}{l@{\hspace{1cm}}l}
Orthogonal polynomials & $R_k$  orthonormal family in $L^2$ \vspace{0.06cm} \\ 
Kac polynomials & $R_k= X^k $  \\
Elliptic polynomials & $R_k= \sqrt{\binom{n}{k}}X^k$ \\

\end{tabular}
\end{figure}

The study of the zeros of random polynomials started with articles by Kac \cite{kac}, Littlewood and Offord \cite{littlehoodofford}, Hammersley \cite{hammersley} which focused on the number of real zeros. The literature about random polynomials is vast, we refer to the book by Bharucha-Reid and Sambandham \cite{bharucha} and the article by Tao and Vu \cite{taovu} for a nice account of the classical results. The study of the complex roots was initiated by Polya, Sparo and Sur. Recently, the minimal condition to obtain the convergence of $(\mu_n)_{n \in \NN}$ was given by Kabluchko and Zaporozhets \cite{kabluchkozapopoly}.

The purpose of this article is to revisit the article of Zeitouni and Zelditch \cite{zeitounizelditch}. They prove that the empirical measures of the zeros of random orthonormal polynomials with respect to a  scalar product in $L^2$ and complex Gaussian coefficients satisfy a large deviation principle in the projective space $\mathbb{CP}^1$. Here we revisit their proof of their theorem in an elementary way although the techniques used are mainly a reformulation of their work. Using a compactification technique based on inverse stereographic projection, we prove a large deviation principle for the push-forward problem on a sphere of $\RR^3$ and then obtain the result in $\CC$. The proof adapts to the case of real coefficients, which allows us to extend the theorem. The compactification technique was first introduced by Zelditch in \cite{zeitounizelditch}, and discovered again independently by Hardy \cite{hardy} in order to prove a large deviation principle for Coulomb gases with weakly confining potential. The compactification method was also used by Bloom in \cite{bloom} in a more general framework.

Large deviations for empirical measures of random polynomials are only known for Gaussian complex coefficients \cite{zeitounizelditch}, which is the subject of the present work, and for exponential coefficients in the Kac case studied by Ghosh and Zeitouni in \cite{ghoshzeitouni}. All these cases rely on the ability to compute the law of the roots of $P_n$. These results should be compared with their equivalent in random matrix theory: the Ginibre ensemble, real or complex. Many authors used the link between Coulomb gases and eigenvalues of random matrices to obtain large deviation principles as in Ben Arous and Guionnet \cite{benarousguionnet}, Ben Arous and Zeitouni \cite{benarouszeitouni}, Hiai and Petz \cite{hiaipetz}. In a more general setup, large deviation principle for empirical measures of a Coulomb gas are valid. See for example \cite{chafai} for a similar result in any dimension with general repulsion, Hardy \cite{hardy}, or Bloom \cite{bloom}.

\subsection*{Orthogonal polynomials} 

Given a probability measure $\nu$ and a continuous function $\phi$, we consider the scalar products on $\CC_n[X]$:
\begin{equation}\label{produitscal}
\langle P,Q \rangle= \int P(z) \overline{Q(z)} e^{-n\phi(z)}d\nu(z).
\end{equation}
Let $R_0,\dots,R_n$ be an orthonormal basis for this scalar product, we define
\begin{equation}\label{polyortho}
P_n= \sum_{k=0}^{n}a_k R_k.
\end{equation}
We call $K$ the support of $\nu$. We assume that its compactification by inverse stereographic projection is non-thin at all the points of its closure. This notion comes from potential theory and is detailed in \cite[p.\ 78]{ransford1995potential}. We can understand it as the requirement that the support of $\nu$ is not too degenerated. For instance, if the support of $\nu$ is connected and has more than one point, it is non-thin at all its points \cite[Thoerem 3.8.3 p 79]{ransford1995potential}. It also holds if it has a finite number of connected components with more than one point. On the other hand, a polar set is thin at every point.
We define the Berstein-Markov property which was introduced in \cite{zeitounizelditch}. This property is the key of the proof of the large deviations upper bound.
\begin{definition}[Bernstein-Markov property]\label{bersteinmarkov}
	We say that the couple $(\phi,\mu)$ satisfies the Bernstein-Markov property if, for every $\varepsilon>0$, there exists a constant $C_{\varepsilon}>0$ such that, for any $n\in \NN$ and for any polynomial $P \in \CC_n[X]$ we have:
	\begin{equation*}
	\sup_{z\in K} \{ |P(z)|^2e^{-n\phi(z)}\} \leq C_{\varepsilon}e^{\varepsilon n} \|P\|_{L^2}^2
	\end{equation*}
where $K$ is the support of the measure $\nu$.
\end{definition}
We define the Hamiltonian:
\begin{equation} \label{hamiltonianortho}
H_O(z_1,\dots,z_n)=-\frac{1}{n^2}\sum_{i\neq j} \log|z_i-z_j| + \frac{n+1}{n^2}\log \int \prod_{i=1}^{n}|z-z_i|^2 e^{-n\phi(z)}d\nu(z).
\end{equation}
In the complex case, the distribution of the roots $(z_1,\dots,z_n)$ of $P_n$ is given by:
\begin{equation}\label{gazgeneral}
\frac{1}{Z_n}\exp \left(-\beta_n H_O(z_1,\dots,z_n)\right)d\ell_{\CC^n}(z_1,\dots,z_n).
\end{equation}
where $\ell_{\CC^n}$ is the Lebesgue measure on $\CC^n$, and $Z_n$ is a constant. 
$\beta_n$ is the inverse of a temperature, so we can see $1/\beta_n$ as a cooling scheme. 
In this article, we will always consider: 
 $$\beta_n = n^2$$ 
 which corresponds to the distribution of the roots of random polynomials. We can see $(z_1,\dots,z_n)$ as a system of particles in interaction. The term $$-\frac{1}{n^2}\sum_{i\neq j} \log|z_i-z_j|$$ corresponds to a repulsion between the particles and is compensated by the confinement $$\frac{n+1}{n^2}\log \int \prod_{i=1}^{n}|z-z_i|^2 e^{-n\phi(z)}d\nu(z).$$ This model is very close to the classical Coulomb gas model, where the confinement takes the simpler form $\sum_{i=1}^{n} V(z_i)$, which does not involve any interaction between the particles. This non-interaction property can be seen as linearity with respect to the empirical measure via the relation:
 \[ \frac{1}{n} \sum_{i=1}^{n} V(z_i)= \int V(w) d\mu_n(w)= \langle V , \mu_n\rangle.\]
The confinement term associated to the Hamiltonian \eqref{hamiltonianortho} is more complicated, but can still be compared to a classical potential thanks to the Jensen inequality.

The study of the real case is interesting only if the polynomials $R_k$'s are real.
In the real case, the distribution of the roots is not absolutely continuous with respect to the Lebesgue measure of $\CC^n$ as the probability to have a real root is positive. This distribution is given by the following mixture:
\begin{equation}\label{gazgeneral2}
\sum_{k=0}^{\lfloor n/2 \rfloor} \frac{1}{Z_{n,k}}  \exp\left( -\beta_n\frac{1}{2}H_O(z_1,\dots,z_n)\right) \\   d\ell_{n,k}(z_1,\dots,z_n).
\end{equation}
where we defined, $\ell_{\RR}$ and $\ell_{\CC}$ being the Lebesgue measures on $\RR$ and $\CC$,
\begin{equation}
d\ell_{n,k}(z_1,\dots,z_n)= d\ell_{\RR}(z_1) \dots   d\ell_{\RR}(z_{n-2k}) d\ell_{\CC}(z_{n-k})  \dots  d\ell_{\CC}(z_n)
\end{equation}
and where $Z_{n,k}$ are constants. The first $n-2k$ particles are on the real line and with $k$ pairs of complex numbers and their conjugates. In the complex case, all the results of this article are valid for any sequence satisfying
$$\beta_n \gg n.$$ In the real case, additional assumptions are needed, they are given in \eqref{assumption0}, \eqref{assumption} and \eqref{asssumption2}. Those asumptions correspond to a uniform control of the normalizing constants

In this article, the term weak topology corresponds to the topology of convergence in distribution, which is the weak topology associated to continuous and bounded test functions. This topology is associated to the Bounded Lipschitz metric $d$ defined as:
\[ \forall \mu , \nu \quad d(\mu,\nu)= \sup_{f} \left| \int f \mu - \int f d\nu \right| \]
where the surpremum is taken over functions bounded by $1$ and $1$-Lipschitz.

\begin{theorem}[Large deviation principle for complex orthogonal polynomials]\label{LDPgeneral}
	Let $\mu_n$ be the empirical measure of the gas \eqref{gazgeneral}.
	Let	us define $I_O: \mathcal{M}_1(\CC) \rightarrow \RR\cup \{\infty\}$:
	\begin{multline*}
	I_O(\mu)=  -\iint \left(\log|z-w|- \frac{1}{2}\log(1+|z|^2) -\frac{1}{2}\log(1+|w|^2)\right)d\mu(z)d\mu(w) \\ 
	+ \sup_{z \in K} \left[ \int \log|z-w|^2-\log(1+|w|^2) d\mu(w)- \phi(z) \right].
	\end{multline*}
	When $\displaystyle \int\log(1+|w|^2) d\mu(w)<\infty$ then we have:
	$$ I_O(\mu) =  -\iint \log|z-w|d\mu(z)d\mu(w)  + \sup_{z \in K} \left[ \int \log|z-w|^2 d\mu(w)- \phi(z) \right]$$
	If the couple $(\phi,\nu)$ satisfies the Bernstein-Markov property \eqref{bersteinmarkov} then $(\mu_n)_{n\in \NN}$ satisfy a large deviation principle in $\mathcal{M}_1(\CC)$ with the weak topology, speed $\beta_n$ and good rate function $I_O-\inf I_O$ . This means that for any Borel set $A \subset \mathcal{M}_1(\CC)$ we have:
	\begin{equation*}
	-\inf_{\mathrm{Int}A} (I_O-\inf I_O) \leq \varliminf_{n\to\infty} \frac{1}{\beta_n} \log \PP(\mu_n \in A) \leq \varlimsup_{n\to\infty} \frac{1}{\beta_n} \log \PP(\mu_n \in A) \leq -\inf_{\mathrm{Clo}A}( I_O-\inf I_O).
	\end{equation*}
\end{theorem}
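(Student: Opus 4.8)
The plan is to use the compactification mentioned in the introduction. Let $\Phi\colon\CC\to\mathbb{S}^2\setminus\{N\}$ be the inverse stereographic projection onto the unit sphere of $\RR^3$, $N$ being the north pole, and recall the chordal identity
\[ |\Phi(z)-\Phi(w)|=\frac{2\,|z-w|}{\sqrt{1+|z|^2}\,\sqrt{1+|w|^2}},\qquad z,w\in\CC . \]
Substituting this into \eqref{hamiltonianortho}, the interaction $-\frac1{n^2}\sum_{i\neq j}\log|z_i-z_j|$ becomes $-\frac1{n^2}\sum_{i\neq j}\log\frac{|\Phi(z_i)-\Phi(z_j)|}{2}$ plus the linear term $-\frac{n-1}{n}\int\log(1+|w|^2)\,d\mu_n(w)$, and $\prod_i|z-z_i|^2$ inside the confinement is rewritten in the same way. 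Modulo errors that are negligible on the exponential scale $\beta_n$ (after the usual truncation of the logarithm), $H_O$ is thus a mean-field energy of the push-forward $\Phi_*\mu_n$ on the \emph{compact} space $\mathcal{M}_1(\mathbb{S}^2)$, whose interaction kernel $-\log|x-y|$ is bounded from above. I first prove a large deviation principle for $\Phi_*\mu_n$ on $\mathcal{M}_1(\mathbb{S}^2)$ and then transport it back to $\mathcal{M}_1(\CC)$.

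\textbf{Mean-field scheme on the sphere.} Work with the unnormalised measure $e^{-\beta_n H_O}\,d\ell_{\CC^n}$ and its image $\widetilde{Q}_n$ on $\mathcal{M}_1(\mathbb{S}^2)$. Following the standard Gibbsian route for logarithmic gases, it suffices to establish, for every $\sigma\in\mathcal{M}_1(\mathbb{S}^2)$, the matching local estimates
\[ \begin{gathered} \varlimsup_{\delta\to0}\ \varlimsup_{n\to\infty}\ \tfrac1{\beta_n}\log\widetilde{Q}_n\bigl(B(\sigma,\delta)\bigr)\ \le\ -J(\sigma),\\[3pt] \varliminf_{\delta\to0}\ \varliminf_{n\to\infty}\ \tfrac1{\beta_n}\log\widetilde{Q}_n\bigl(B(\sigma,\delta)\bigr)\ \ge\ -J(\sigma), \end{gathered} \]
where $J$ is the spherical rate function: the logarithmic energy of $\sigma$ on $\mathbb{S}^2$, corrected by the linear term above and by the $\sup_K$ term coming from the confinement. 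Since $\mathcal{M}_1(\mathbb{S}^2)$ is compact, exponential tightness is automatic, and these local bounds glue into a full LDP at speed $\beta_n$ with good (lower semicontinuous) rate function $J$; applying it to the whole space identifies $\frac1{\beta_n}\log Z_n\to-\inf J$, which after normalisation produces the shift $-\inf I_O$. For the lower bound one first treats measures $\sigma$ with bounded density and compact support avoiding $N$ — for which the relevant potentials are continuous and the usual ``grid'' construction applies — and then extends to every $\sigma$ by an $I_O$-density argument together with the lower semicontinuity of $J$.

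\textbf{The confinement term: the main obstacle.} The pair interaction is routine: truncate the (bounded above) spherical kernel at a level $-M$, use that $\sigma\mapsto\iint\bigl(\log|x-y|\vee(-M)\bigr)\,d\sigma\,d\sigma$ is weakly continuous, pay $O(M/n)$ to discard the diagonal, and let $M\to\infty$ with lower semicontinuity. The delicate part is the confinement
\[ \frac{n+1}{n^2}\log\int\prod_{i=1}^{n}|z-z_i|^2\,e^{-n\phi(z)}\,d\nu(z)\ =\ \frac{n+1}{n^2}\log\int e^{\,n f_{\mu_n}(z)}\,d\nu(z),\qquad f_\mu(z):=\int\log|z-w|^2\,d\mu(w)-\phi(z). \]
For the LDP lower bound I need an upper bound on $H_O$, and $\int e^{n f_{\mu_n}}\,d\nu\le e^{\,n\sup_K f_{\mu_n}}$ holds trivially. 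For the LDP upper bound I need a lower bound on $H_O$, and this is exactly what the Bernstein--Markov property \eqref{bersteinmarkov} provides: applied to $P(z)=\prod_i(z-z_i)$ it gives $\sup_K e^{n f_{\mu_n}}\le C_\varepsilon e^{\varepsilon n}\int e^{n f_{\mu_n}}\,d\nu$, hence $\frac{n+1}{n^2}\log\int e^{n f_{\mu_n}}\,d\nu\ge\sup_K f_{\mu_n}-\varepsilon+o(1)$ for every $\varepsilon>0$. (Jensen's inequality gives the coarser companion $\frac{n+1}{n^2}\log\int e^{n f_{\mu_n}}\,d\nu\ge\frac{n+1}{n}\int f_{\mu_n}\,d\nu$, which keeps $J$ from degenerating.) The real work is to show that $\sup_{z\in K}f_\mu(z)$ behaves well under weak limits, i.e.\ to match $\varliminf_n\sup_K f_{\mu_n}$ with $\sup_K f_\mu$ along sequences $\mu_n\to\mu$ with $I_O(\mu)<\infty$. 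This is precisely where the non-thinness hypothesis on the compactified support of $\nu$ enters: via the regularity theory of logarithmic potentials (continuity of subharmonic functions off polar sets, principle of descent), it forces the supremum over $K$ to be semicontinuous enough in $\mu$ for the two local estimates to reconstruct the announced rate function $I_O$. I expect this confinement analysis to be the crux of the proof.

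\textbf{Back to $\CC$.} The map $\Phi_*$ is a homeomorphism from $\mathcal{M}_1(\CC)$ with the weak topology onto the subset $\{\sigma\in\mathcal{M}_1(\mathbb{S}^2):\sigma(\{N\})=0\}$, and there $J(\sigma)=I_O(\Phi_*^{-1}\sigma)$ by the chordal identity, while $J(\sigma)=+\infty$ as soon as $\sigma(\{N\})>0$, because an atom makes the spherical logarithmic energy infinite. Since every $\Phi_*\mu_n$ is purely atomic and supported away from $N$, the LDP on $\mathcal{M}_1(\mathbb{S}^2)$ restricts to the announced LDP on $\mathcal{M}_1(\CC)$ with good rate function $I_O-\inf I_O$. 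Finally, the identity between the two displayed expressions for $I_O$ when $\int\log(1+|w|^2)\,d\mu(w)<\infty$ is the elementary rearrangement obtained by expanding $-\tfrac12\log(1+|z|^2)-\tfrac12\log(1+|w|^2)$, using that $\mu$ is a probability measure, and pulling the finite constant $\int\log(1+|w|^2)\,d\mu(w)$ out of the supremum over $z\in K$.
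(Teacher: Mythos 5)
Your proposal follows essentially the same route as the paper: compactification by inverse stereographic projection, a weak LDP on $\mathcal{M}_1(\mathcal{S}^2)$ (compactness replacing exponential tightness), Bernstein--Markov applied to $P(z)=\prod_i(z-z_i)$ for the confinement lower bound in the upper-bound estimate, non-thinness of the compactified support for the semicontinuity of $\mu\mapsto\sup_K f_\mu$, truncation of the spherical kernel for the pair interaction, identification of $\tfrac1{\beta_n}\log Z_n\to-\inf I_{O,\mathcal{S}^2}$ by evaluating on the whole space, and finally the contraction/inclusion principle to return to $\mathcal{M}_1(\CC)$. The ingredients and their roles match the paper's Sections 3 and 5 step for step; the only cosmetic difference is the normalisation of the sphere (radius $1$ chordal identity with a factor $2$ vs.\ the paper's radius $1/2$ sphere), which does not affect the argument.
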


\begin{theorem}[Large deviation principle for real orthogonal polynomials]\label{LDPgeneralreel}
	Let $\mu_n$ be the empirical measure of the gas	\eqref{gazgeneral2}.
	If the couple $(\phi, \nu)$ satisfies the Bernstein-Markov property, then $(\mu_n)_{n\in \NN}$ satisfies a large deviation principle in $\mathcal{M}_1(\CC)$ with the weak topology, speed $\beta_n$ and good rate function:
	\begin{align*}
	\tilde{I}_O(\mu)= \begin{cases}
	\frac{1}{2}(I_O(\mu)-\inf I_O) &  \text{if $\mu$ is invariant under the map } z \mapsto \bar{z} \\
	\infty & \text{otherwise.}
	\end{cases} 
	\end{align*}
	This means that for any Borel set $A \subset \mathcal{M}_1(\CC)$ we have:
	\begin{equation*}
	-\inf_{\mathrm{Int}A} \tilde{I}_O \leq \varliminf_{n\to\infty} \frac{1}{\beta_n} \log \PP(\mu_n \in A) \leq \varlimsup_{n\to\infty} \frac{1}{\beta_n} \log \PP(\mu_n \in A) \leq -\inf_{\mathrm{Clo}A} \tilde{I}_O.
	\end{equation*}
\end{theorem}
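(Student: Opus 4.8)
The plan is to obtain Theorem~\ref{LDPgeneralreel} by running the proof of Theorem~\ref{LDPgeneral} again, with modifications accounting for three features of the ensemble \eqref{gazgeneral2}: the factor $\tfrac12$ in the exponent, the mixture over the number $k$ of complex-conjugate pairs of roots, and the reflection symmetry automatically imposed on $\mu_n$.

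\emph{Symmetry reduction.} Since the $R_k$ are real, $P_n$ has real coefficients, so its root set is invariant under $z\mapsto\bar z$ and $\mu_n$ lies almost surely in the closed subset $\mathcal{M}_1^{\mathrm{sym}}(\CC)\subset\mathcal{M}_1(\CC)$ of conjugation-invariant probability measures. Hence $\PP(\mu_n\in A)=\PP\bigl(\mu_n\in A\cap\mathcal{M}_1^{\mathrm{sym}}(\CC)\bigr)$ for every Borel $A$, and it suffices to establish a large deviation principle at speed $\beta_n$ for $(\mu_n)$ viewed in the Polish space $\mathcal{M}_1^{\mathrm{sym}}(\CC)$ with rate function $\tfrac12(I_O-\inf I_O)$; the value $+\infty$ off $\mathcal{M}_1^{\mathrm{sym}}(\CC)$ is then automatic. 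Goodness of $\tilde I_O$ follows because its sublevel sets are $\mathcal{M}_1^{\mathrm{sym}}(\CC)\cap\{I_O\le\inf I_O+2c\}$, closed subsets of compact sets. Moreover, in the relevant real setting $\nu$ and $\phi$ are conjugation-invariant (which is what allows a real orthonormal basis, and is the case for Kac and elliptic polynomials), so the unique minimiser of the variational problem defining $\inf I_O$ is itself conjugation-symmetric; thus $\inf_{\mathcal{M}_1^{\mathrm{sym}}}I_O=\inf I_O$, and $\tilde I_O\ge 0$ with infimum $0$.

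\emph{Mixture and per-stratum estimates.} Write $\PP(\mu_n\in A)=\sum_{k=0}^{\lfloor n/2\rfloor}\PP(\mu_n\in A,\ \mathrm{type}\ k)$, where $\mathrm{type}\ k$ is the event that $P_n$ has $n-2k$ real roots and $k$ conjugate pairs. For the upper bound, $\PP(\mu_n\in A)\le(\lfloor n/2\rfloor+1)\max_k\PP(\mu_n\in A,\ \mathrm{type}\ k)$, and the prefactor is $e^{o(\beta_n)}$ since $\beta_n\gg n$; for the lower bound one retains a single term $\PP(\mu_n\in A,\ \mathrm{type}\ k_n)$ for a well-chosen $k_n$ (with $n-2k_n\approx n\,\mu(\RR)$ adapted to the target measure $\mu$). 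In both cases one is reduced to controlling $\frac1{Z_{n,k}}\int\exp\!\bigl(-\tfrac12\beta_n H_O\bigr)\,d\ell_{n,k}$ on the event $\{\mu_n\in A\}$, a Coulomb-gas-type integral identical to that of Theorem~\ref{LDPgeneral} but with $\tfrac12H_O$ in place of $H_O$. The machinery behind Theorem~\ref{LDPgeneral} then applies unchanged: push everything forward to the compact sphere $S^2\subset\RR^3$ by inverse stereographic projection, so that the logarithmic energy becomes the spherical energy, which is lower semicontinuous and bounded below on the compact space $\mathcal{M}_1(S^2)$, and sandwich the confinement term $\tfrac{n+1}{n^2}\log\int\prod_i|z-z_i|^2e^{-n\phi}\,d\nu$ between matching bounds via Jensen's inequality and the Bernstein-Markov property. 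This produces, per stratum, the rate $\tfrac12 I_O(\mu)$, the additive constant $-\tfrac12\inf I_O$ coming from $\tfrac1{\beta_n}\log Z_{n,k}$. For the lower bound one needs in addition that every symmetric $\mu$ with $I_O(\mu)<\infty$ is approximable, weakly and without increasing the spherical energy in the limit, by empirical measures of symmetric configurations of type $k_n$; this is the reflection-symmetric analogue of the measure-regularisation step in the proof of Theorem~\ref{LDPgeneral}, and it succeeds because the non-thinness hypothesis on the compactification of $K$ prevents any capacity obstruction, including along the image of the real line (a great circle of $S^2$) where the real roots must sit.

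\emph{Main obstacle.} The delicate ingredient is a control of the partition functions $Z_{n,k}$ that is \emph{uniform in} $k$: one needs $\tfrac1{\beta_n}\log Z_{n,k}\to-\tfrac12\inf I_O$, with error $o(\beta_n)$ uniform over $0\le k\le\lfloor n/2\rfloor$ (or the corresponding one-sided bounds), for otherwise the mixture could tilt the exponential rate. This is exactly what the assumptions \eqref{assumption0}, \eqref{assumption} and \eqref{asssumption2} are designed to provide. With them in hand, the subexponential factor $\lfloor n/2\rfloor+1$ from the upper bound and the per-stratum estimates combine into matching upper and lower bounds with good rate function $\tilde I_O$, which proves the theorem.
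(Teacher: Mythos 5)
Your proposal follows the same route as the paper: push to $\mathcal{S}^2$ by inverse stereographic projection, handle the mixture over $k$ with a per-stratum bound and a subexponential prefactor, rely on Bernstein--Markov for the upper bound and a symmetric rectangle construction for the lower bound, and hinge everything on the uniform control of $Z_{n,k}$ from \eqref{asssumption2}. The only small difference is your choice $n-2k_n\approx n\,\mu(\RR)$ for the lower-bound stratum; the paper simply takes $k=\lfloor n/2\rfloor$ (zero or one real root), which suffices because the regularised target measure is absolutely continuous and hence charges $\RR$ with mass zero, so your more adaptive choice is unnecessary but not incorrect.
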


Those last two theorems imply that, in both cases, almost surely:
\begin{equation}\label{convergence}
d(\mu_n,\mathrm{Argmin}(I_O))\xrightarrow[n \rightarrow \infty]{} 0
\end{equation} where $\mathrm{Argmin}(I_O)$ is the unique minizer of function $I_O$. This is a consequence of the Borel-Cantelli Lemma used with the sets $\{\mu \in \mathcal{M}_1(\CC) \mid d(\mu , \mathrm{Argmin}(I_O))>\varepsilon\}$. The minimizer is the equilibrium measure of the support of $\nu$, see \cite[Lemma 30]{zeitounizelditch}.

\subsection*{Kac polynomials} 

The most important example of orthonormal polynomials are Kac polynomials:
\begin{equation}
P_n= a_o + a_1 X +\dots + a_nX^n.
\end{equation}
The canonical basis is orthonormal with respect to the scalar product on $\CC_n[X]$:
\begin{equation}\label{psKac}
\langle P, Q \rangle = \int P(z) \overline{Q(z)}d\nu_S(z)
\end{equation}
where $\nu_S$ is the uniform measure on $S$, the unit circle of $\CC$. The sequence $(\mu_n)_{n\in \NN}$ converges almost surely weakly towards the measure $\nu_S$. Although this result is quite ancient, we can deduced it from \eqref{convergence}. We define the Hamiltonian:
\begin{equation}\label{hamiltonienkac}
H(z_1,\dots,z_n)= -\frac{1}{n^2}\sum_{i\neq j} \log|z_i-z_j| + \frac{n+1}{n^2}\log \int \prod_{i=1}^{n}|z-z_i|^2d\nu_S(z)
\end{equation}
In the complex case, the distribution of the roots is given by the Gibbs measure:
\begin{equation}\label{kaccomplexe}
\frac{1}{Z_n}\exp \left(-\beta_n H(z_1, \dots,z_n) \right)d\ell_{\CC^n}(z_1,\dots,z_n)
\end{equation}
In the real case, the distribution of the roots is the mixture:
\begin{equation}\label{kacreel}
\sum_{k=0}^{\lfloor n/2 \rfloor} \frac{1}{Z_{n,k}}  \exp \left( -\beta_n\frac{1}{2} H(z_1,\dots,z_n) \right)d\ell_{n,k}(z_1,\dots,z_n)
\end{equation}
where the $Z_{n,k}$ are constants.

\begin{theorem}[Large deviations for complex Kac polynomials] \label{LDPKac}
	Let $\mu_n$ be the empirical measure of the gas \eqref{kaccomplexe}.
	Let	us define $I: \mathcal{M}_1(\CC) \rightarrow \RR\cup \{\infty\}$:
	\begin{align*}
	I(\mu) & =  -\iint \left( \log|z-w|-\frac{1}{2}\log(1+|z|^2)-\frac{1}{2}\log(1+|w|^2)\right)d\mu(z)d\mu(w) \\ 
	& \qquad \qquad +  \sup_{z \in S} \int \left( \log|z-w|^2-\log(1+|w|^2)\right)d\mu(w).
	\end{align*}
	When $\displaystyle \int \log(1+|z|^2)d\mu(z)$ is finite, this function can be simplified to:
	$$I(\mu)=  -\iint \log|z-w|d\mu(z)d\mu(w) + \sup_{z \in S} \int \log|z-w|^2d\mu(w).$$
	The random sequence $(\mu_n)_{n\in \NN}$ satisfies a large deviation principle in $\mathcal{M}_1(\CC)$ with the weak topology and with speed $\beta_n$ and good rate function $I$. For any Borel set $A \subset \mathcal{M}_1(\CC)$ we have:
	\begin{equation}
	-\inf_{\mathrm{Int}A} I \leq \varliminf_{n\to\infty} \frac{1}{\beta_n} \log \PP(\mu_n \in A) \leq \varlimsup_{n\to\infty} \frac{1}{\beta_n} \log \PP(\mu_n \in A) \leq -\inf_{\mathrm{Clo}A} I.
	\end{equation}
\end{theorem}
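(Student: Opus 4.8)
The plan is to obtain Theorem~\ref{LDPKac} as the special case of Theorem~\ref{LDPgeneral} corresponding to the weight $\phi\equiv 0$ and the probability measure $\nu=\nu_S$, whose support is $K=S$. First I would check that the canonical basis $(1,X,\dots,X^n)$ is orthonormal for the scalar product \eqref{psKac}, which is immediate since $\int z^{j}\overline{z^{k}}\,d\nu_S(z)=\frac{1}{2\pi}\int_0^{2\pi}e^{i(j-k)\theta}\,d\theta=\delta_{jk}$. Consequently the Hamiltonian \eqref{hamiltonienkac} equals the Hamiltonian $H_O$ of \eqref{hamiltonianortho} with $\phi\equiv0$ and $\nu=\nu_S$, and the gas \eqref{kaccomplexe} is exactly the gas \eqref{gazgeneral} for this choice. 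It then remains to verify the two hypotheses of Theorem~\ref{LDPgeneral}: the non-thinness of the compactification of $K$, and the Bernstein--Markov property of the couple $(0,\nu_S)$.

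The non-thinness is clear, since $S$ is connected with more than one point: it is then non-thin at each of its points by \cite[Theorem 3.8.3]{ransford1995potential}, and the inverse stereographic projection preserves this because it is conformal. For the Bernstein--Markov property I would use the elementary bound on the unit circle: writing $P=\sum_{k=0}^{n}a_kX^{k}\in\CC_n[X]$ one has $\|P\|_{L^2(\nu_S)}^2=\sum_{k=0}^{n}|a_k|^2$, while for $|z|=1$ the Cauchy--Schwarz inequality gives $|P(z)|\le\sum_{k=0}^{n}|a_k|\le\sqrt{n+1}\,\|P\|_{L^2(\nu_S)}$, hence $\sup_{z\in S}|P(z)|^2\le(n+1)\,\|P\|_{L^2(\nu_S)}^2$. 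This polynomial growth is far stronger than the sub-exponential growth required in Definition~\ref{bersteinmarkov}: given $\varepsilon>0$ it suffices to take $C_\varepsilon=\sup_{n\ge0}(n+1)e^{-\varepsilon n}<\infty$.

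Applying Theorem~\ref{LDPgeneral} now gives a large deviation principle for $(\mu_n)_{n\in\NN}$ at speed $\beta_n$ with good rate function $I_O-\inf I_O$, where $I_O$ with $\phi\equiv0$ and $K=S$ is precisely the function $I$ of the statement (the two displayed formulas for $I$ coinciding on $\{\mu:\int\log(1+|z|^2)\,d\mu(z)<\infty\}$, as in Theorem~\ref{LDPgeneral}). To conclude it remains to see that $\inf I=0$, so that $I$ itself is the rate function; this is the one point that needs a short computation rather than a reference. By \cite[Lemma 30]{zeitounizelditch} the unique minimizer of $I_O$ is the equilibrium measure of $K=S$ with no external field, i.e.\ the uniform measure $\nu_S$, and I would check $I(\nu_S)=0$ directly, using that the logarithmic capacity of the unit circle equals $1$, so $\iint\log|z-w|\,d\nu_S(z)\,d\nu_S(w)=0$, and that $\int\log|z-w|^2\,d\nu_S(w)=2\log^{+}|z|=0$ for every $z\in S$. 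Apart from this and the Bernstein--Markov estimate, the entire analytic content is carried by Theorem~\ref{LDPgeneral}, so there is no real obstacle beyond correctly identifying the model and the rate function.
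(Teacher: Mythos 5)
Your deduction of Theorem~\ref{LDPKac} from Theorem~\ref{LDPgeneral} (with $\phi\equiv0$, $\nu=\nu_S$, $K=S$) is mathematically sound, but it runs in exactly the opposite direction from the paper: Section~3 of the paper proves the Kac case from scratch in four detailed steps (distribution of the roots, push-forward to $\mathcal{S}^2$, large deviations on the sphere, contraction back to $\CC$), and Section~5 then obtains Theorem~\ref{LDPgeneral} by explaining how the Kac argument adapts. Within the logical structure of this paper your reduction would therefore be circular; it is a genuine proof only if one takes Theorem~\ref{LDPgeneral} as established independently, say by the original argument of \cite{zeitounizelditch}. Granting that, every verification you carry out is correct and covers exactly what must be checked: orthonormality of the monomials for the scalar product~\eqref{psKac}; non-thinness of $T(S)$ at each of its points (here it would be a bit simpler to observe directly that $T(S)$ is a circle, hence a connected compact with more than one point, so \cite[Theorem~3.8.3]{ransford1995potential} applies on the sphere, rather than invoking conformal invariance); the bound $\sup_{|z|=1}|P(z)|\le\sqrt{n+1}\,\|P\|_{L^2(\nu_S)}$ for the Bernstein--Markov property (the paper's own display states $\sqrt{n}$, which is a minor typo --- your $\sqrt{n+1}$ is the right constant); and the computation $I(\nu_S)=0$ via $\iint\log|z-w|\,d\nu_S\,d\nu_S=0$ and $U^{\nu_S}(z)=-\log^{+}|z|=0$ on $S$, which correctly explains why the rate function in Theorem~\ref{LDPKac} is $I$ itself and not $I-\inf I$. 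What your route buys is brevity once the general theorem is available; what the paper's route buys is the self-contained elementary treatment of the model case that is the article's stated purpose.
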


\begin{theorem}[Large deviations for real Kac polynomials]\label{LDPKacreel}
	Let $\mu_n$ be the empirical measure of the gas \eqref{kacreel}, then the random sequence $(\mu_n)_{n\in \NN}$ satisfies a large deviation principle in $\mathcal{M}_1(\CC)$ for the weak topology  with speed $\beta_n$ and good rate function $\tilde{I}$ where:
	$$
	\tilde{I}(\mu)=\begin{cases}
	\frac{1}{2}I(\mu) & \text{if $\mu$ is invariant under the map } z \mapsto \bar{z} \\
	\infty & \text{otherwise.}
	\end{cases} 
	$$
	This means that for any Borel set $A \subset \mathcal{M}_1(\CC)$ we have:
	\begin{equation*}
	-\inf_{\mathrm{Int}A} \tilde{I} \leq \varliminf_{n\to\infty} \frac{1}{\beta_n} \log \PP(\mu_n \in A) \leq \varlimsup_{n\to\infty} \frac{1}{\beta_n} \log \PP(\mu_n \in A) \leq -\inf_{\mathrm{Clo}A} \tilde{I}.
	\end{equation*}
\end{theorem}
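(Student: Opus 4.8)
The plan is to deduce the real Kac large deviation principle from the general real orthogonal result, Theorem \ref{LDPgeneralreel}, exactly as the complex Kac Theorem \ref{LDPKac} follows from Theorem \ref{LDPgeneral}. First I would observe that Kac polynomials are a special case of the orthonormal polynomial setup: take $\nu = \nu_S$, the uniform measure on the unit circle $S$, and $\phi \equiv 0$. The canonical basis $(1, X, \dots, X^n)$ is orthonormal for the scalar product \eqref{psKac}, so the Hamiltonian \eqref{hamiltonienkac} is precisely $H_O$ from \eqref{hamiltonianortho} with these choices, and the mixture \eqref{kacreel} is exactly \eqref{gazgeneral2}. Hence the real Kac empirical measure is the empirical measure of the real orthogonal gas with $(\phi,\nu)=(0,\nu_S)$.

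Next I would verify the hypotheses of Theorem \ref{LDPgeneralreel}. The support $K = S$ is connected with more than one point, so by \cite[Theorem 3.8.3 p.\ 79]{ransford1995potential} its inverse stereographic compactification is non-thin at every point of its closure, as required in the orthogonal polynomials subsection. It remains to check the Bernstein-Markov property for $(\phi,\nu)=(0,\nu_S)$: for every $\varepsilon>0$ there is $C_\varepsilon$ with $\sup_{z\in S}|P(z)|^2 \le C_\varepsilon e^{\varepsilon n}\|P\|_{L^2(\nu_S)}^2$ for all $P\in\CC_n[X]$. This is the classical Nikolskii-type inequality on the circle: writing $P(z)=\sum_{k=0}^n c_k z^k$, one has $\|P\|_{L^2(\nu_S)}^2 = \sum|c_k|^2$, while $\sup_{z\in S}|P(z)| \le \sum|c_k| \le \sqrt{n+1}\,(\sum|c_k|^2)^{1/2}$ by Cauchy-Schwarz, giving $\sup_{z\in S}|P(z)|^2 \le (n+1)\|P\|_{L^2}^2$, which is even stronger (sub-exponential) than required. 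One also needs the additional real-case assumptions \eqref{assumption0}, \eqref{assumption}, \eqref{asssumption2} on the normalizing constants $Z_{n,k}$; these are presumably established for this model in the body of the paper, so I would simply invoke that.

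With these checks in place, Theorem \ref{LDPgeneralreel} applies directly and yields a large deviation principle in $\mathcal{M}_1(\CC)$ with speed $\beta_n$ and good rate function $\tilde I_O = \tfrac12(I_O - \inf I_O)$ on measures invariant under $z\mapsto\bar z$ and $+\infty$ otherwise. To finish, I would identify this with the claimed $\tilde I$. Since $\phi\equiv 0$, the function $I_O$ with $K=S$ is literally the function $I$ of Theorem \ref{LDPKac}. Moreover, as noted after \eqref{convergence}, the minimizer of $I_O$ is the equilibrium measure of $S$, which is $\nu_S$, and one computes $I(\nu_S) = 0$ (the logarithmic energy term and the supremum term cancel, as is already used implicitly in the complex Kac case where the rate function is stated as $I$ rather than $I - \inf I$); thus $\inf I_O = 0$ and $\tilde I_O = \tfrac12 I = \tilde I$. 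The only mild obstacle is bookkeeping: making sure the constant normalization $\inf I = 0$ is genuinely available (it follows from the complex Kac statement, where $I$ itself — not $I-\inf I$ — is asserted to be a good rate function, forcing $\inf I = 0$), and confirming that the real-case assumptions on $Z_{n,k}$ have indeed been verified for $(\phi,\nu)=(0,\nu_S)$ earlier; both are routine given the structure of the paper.
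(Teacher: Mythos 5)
Your plan is mathematically sound as a logical deduction, and the individual verifications are correct: $(\phi,\nu)=(0,\nu_S)$ recovers the Kac gas; the circle is connected hence non-thin at all its points; your Cauchy--Schwarz argument gives $\sup_{z\in S}|P(z)|^2\le(n+1)\|P\|_{L^2(\nu_S)}^2$, which is the same sub-exponential Bernstein--Markov bound the paper proves; $I_O$ with $K=S$, $\phi=0$ specializes to $I$; and $\inf I=0$ is indeed forced by the complex Kac statement (and by $I(\nu_S)=0$, since $\nu_S$ is the equilibrium measure of $S$). For the control of the mixture constants, you correctly note that \eqref{assumption0} is what is needed, and for the Kac gas with $\beta_n=n^2$ the $Z_{n,k}$ are fully explicit (there is no $|A_n|$ factor since the canonical basis is already orthonormal), so Proposition \ref{limits} gives the uniform control directly.

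However, your approach runs \emph{backwards} relative to the logical structure of the paper. The paper proves Theorem \ref{LDPKacreel} directly in Section 3 by executing the four steps (distribution of roots, push-forward to $\mathcal{S}^2$, LDP on the sphere via Propositions \ref{thmcomplexe}--\ref{thmreel}, and contraction back to $\CC$ via Propositions \ref{homeo} and \ref{equivalence}); Theorem \ref{LDPgeneralreel} is then proved later, in Section 5, largely \emph{by reference} to the Kac proof (``the proof is the same'', ``we can reproduce exactly the proof of the lower bound for Kac polynomials''). So if you try to cite Theorem \ref{LDPgeneralreel} to obtain Theorem \ref{LDPKacreel} \emph{within this paper as written}, the argument becomes circular: the proof you are invoking is itself a pointer back to the Kac proof you are trying to avoid giving. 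Your plan would be clean in a hypothetical version of the paper where Section 5 is written out self-contained and the Kac case is stated as a corollary; in the actual paper, the correct short proof is the one at the end of Section 3, namely, specialize the already-established LDP on $\mathcal{M}_1(\mathcal{S}^2)$ (Proposition \ref{thmreel}), apply the inclusion principle on $\{\mu\in\mathcal{M}_1(\mathcal{S}^2):\mu(\{N\})=0\}$ and then the contraction principle along $(T^{-1})^*$, and identify $\tilde I_{\mathcal{S}^2}\circ T^*=\tilde I$ using Proposition \ref{equivalence}. You should either restate your proposal as that contraction argument, or explicitly note that you are assuming an independent proof of Theorem \ref{LDPgeneralreel}.
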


\subsection*{Elliptic polynomials} 

We will see how the study of Kac polynomials can be adapted to prove a large deviation principle for the empirical measure associated to the roots of polynomials of the form 
$$ P_n= \sum_{k=0}^{n}  a_k \binom{n}{k}^{1/2} X^k.$$ 
The polynomials $\sqrt{n+1} \displaystyle \binom{n}{k}^{1/2} X^k$ are orthonormal for the scalar product on $\CC_n[X]$:
$$ \langle P,Q \rangle = \int P(z)\overline{Q(z)} \frac{1}{ (1+|z|^2)^{n}} \frac{d\ell_{\CC}(z)}{\pi(1+|z|^2)^2} \ .$$
As multiplying a polynomial by a constant does not change the zeros, the factor $\sqrt{n+1}$ is omitted. It is known that the random sequence $(\mu_n)_{n\in \NN}$ converges almost surely weakly towards $$\frac{d\ell_{\CC}(z)}{\pi (1+|z|^2)^2}$$ which is called the complex Cauchy measure\footnote{or Fubini-Study measure}. It can be seen as a consequence of \eqref{convergence}.  We define the Hamiltonian:
\begin{equation}
H_E(z_1,\dots,z_n)=-\frac{1}{n^2}\sum_{i\neq j} \log|z_i-z_j| + \frac{n+1}{n^2}\log \int \frac{\prod_{i=1}^{n}|z-z_i|^2}{(1+|z|^2)^n}\frac{d\ell_{\CC}(z)}{\pi (1+|z|^2)^2}
\end{equation}
and the Gibbs measure associated to the distribution of the roots in the complex case:
\begin{equation}\label{ellipticcomplexe}
\frac{1}{Z_n}\exp \left(-\beta_n H_E(z_1,\dots,z_n) \right)d\ell_{\CC^n}(z_1,\dots,z_n).
\end{equation}
In the real case, the roots form a mixture of Coulomb gases distributed with respect to:
\begin{align} \label{elliptiquereel}
\sum_{k=0}^{\lfloor n/2 \rfloor} \frac{1}{Z_{n,k}}  \exp\left( -\beta_n\frac{1}{2}H_E(z_1,\dots,z_n)\right)  d\ell_{n,k}(z_1,\dots,z_n)
\end{align}
where the $Z_{n,k}$ are constants.

\begin{theorem}[Large deviation principle for complex elliptic polynomials]\label{LDPelliptique}
	Let $\mu_n$ be the empirical measure of the gas	\eqref{ellipticcomplexe}.
	Let	us define $I_E: \mathcal{M}_1(\CC) \rightarrow \RR\cup \{\infty\}$:
	\begin{multline*}
	I_E(\mu)=  -\iint\left( \log|z-w|-\frac{1}{2}\log(1+|z|^2)- \frac{1}{2}\log(1+|w|^2)\right)d\mu(z)d\mu(w)  \\   + \sup_{z \in \CC} \left[ \int \left(\log|z-w|^2- \log(1+|w|^2)\right) d\mu(w)- \log(1+|z|^2)\right].
	\end{multline*}
	When $\displaystyle \int \log(1+|z|^2) d\mu(z) <\infty$, we can write:
	$$I_E(\mu)= -\iint \log|z-w|d\mu(z)d\mu(w) + \sup_{z \in \CC} \left[ \int \log|z-w|^2 d\mu(w)- \log(1+|z|^2)\right].$$
	$(\mu_n)_{n\in \NN}$  satisfies a large deviation principle in $\mathcal{M}_1(\CC)$ with the weak topology and with speed $\beta_n$ and good rate function $I_E-\inf I_E$. This means that for any Borel set $A \subset \mathcal{M}_1(\CC)$ we have:
	\begin{equation*}
	-\inf_{\mathrm{Int}A} (I_E-\inf I_E) \leq \varliminf_{n\to\infty} \frac{1}{\beta_n} \log \PP(\mu_n \in A) \leq \varlimsup_{n\to\infty} \frac{1}{\beta_n} \log \PP(\mu_n \in A) \leq -\inf_{\mathrm{Clo}A} (I_E-\inf I_E).
	\end{equation*}
\end{theorem}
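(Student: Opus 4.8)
The plan is to recognise the complex elliptic model as the particular instance of the orthogonal polynomial model of Theorem~\ref{LDPgeneral} obtained by taking $\nu$ to be the complex Cauchy (Fubini--Study) measure $d\nu(z)=\frac{d\ell_{\CC}(z)}{\pi(1+|z|^2)^2}$ and $\phi(z)=\log(1+|z|^2)$ (which is continuous, while $\nu$ is a probability measure, as required), and then to quote that theorem. With this choice one has $e^{-n\phi(z)}\,d\nu(z)=(1+|z|^2)^{-n}\frac{d\ell_{\CC}(z)}{\pi(1+|z|^2)^2}$, so the scalar product \eqref{produitscal} is exactly the one for which the polynomials $\sqrt{n+1}\,\binom{n}{k}^{1/2}X^k$, $k=0,\dots,n$, form an orthonormal basis of $\CC_n[X]$; since the factor $\sqrt{n+1}$ leaves the zeros unchanged, the random polynomial \eqref{polyortho} is, up to that constant, the elliptic polynomial, and the two empirical measures agree. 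Substituting $e^{-n\phi}=(1+|z|^2)^{-n}$ into \eqref{hamiltonianortho} turns $H_O$ into $H_E$, hence the Gibbs measure \eqref{gazgeneral} into \eqref{ellipticcomplexe}; and substituting $\phi(z)=\log(1+|z|^2)$ into the formula for $I_O$ produces exactly $I_E$, in both its general and its simplified form. It therefore only remains to check the two hypotheses of Theorem~\ref{LDPgeneral} for this couple $(\phi,\nu)$.

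The non-thinness hypothesis is immediate: here $K=\CC$, whose compactification by inverse stereographic projection is the whole two-sphere, which is also the closure of $K$ inside it; being connected with more than one point, it is non-thin at each of its points by \cite[Theorem~3.8.3, p.~79]{ransford1995potential}. In particular the unboundedness of $K$ causes no difficulty --- this is precisely the configuration that the compactification hypothesis of Theorem~\ref{LDPgeneral} is designed to accommodate. For the Bernstein--Markov property \eqref{bersteinmarkov}, let $P=\sum_{k=0}^{n}c_kX^k\in\CC_n[X]$ and set $a_k=c_k\binom{n}{k}^{-1/2}$; by the Cauchy--Schwarz inequality and the binomial theorem,
\begin{equation*}
\frac{|P(z)|^2}{(1+|z|^2)^n}=\frac{\left|\sum_{k=0}^{n}a_k\binom{n}{k}^{1/2}z^k\right|^2}{(1+|z|^2)^n}\le\left(\sum_{k=0}^{n}|a_k|^2\right)\frac{\sum_{k=0}^{n}\binom{n}{k}|z|^{2k}}{(1+|z|^2)^n}=\sum_{k=0}^{n}|a_k|^2 .
\end{equation*}
Since $\sqrt{n+1}\,\binom{n}{k}^{1/2}X^k$ is orthonormal one has $\|P\|_{L^2}^2=\frac{1}{n+1}\sum_{k=0}^{n}|a_k|^2$, so $\sup_{z\in\CC}|P(z)|^2e^{-n\phi(z)}\le(n+1)\|P\|_{L^2}^2$; and since $n+1\le C_\varepsilon e^{\varepsilon n}$ for every $n$ once $C_\varepsilon$ is large enough, the Bernstein--Markov property holds, with the optimal polynomial constant.

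With both hypotheses in force, Theorem~\ref{LDPgeneral} applies verbatim and yields a large deviation principle for $(\mu_n)_{n\in\NN}$ in $\mathcal{M}_1(\CC)$ with the weak topology, speed $\beta_n$, and good rate function $I_O-\inf I_O=I_E-\inf I_E$, which is the assertion. I do not expect a genuine obstacle: the only model-specific inputs are the identification of the law of the roots of $P_n$ with the Gibbs measure \eqref{ellipticcomplexe} --- the explicit Gaussian computation recorded just before the statement --- and the Bernstein--Markov bound above, and for elliptic polynomials every quantity entering Theorem~\ref{LDPgeneral} is completely explicit. The one point that deserves care is the conceptual one already flagged, namely that $K=\CC$ is non-compact, so one must rely on the compactification built into Theorem~\ref{LDPgeneral} rather than on a naive compact-support argument.
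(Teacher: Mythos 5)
Your proof is correct and your verification of the two standing hypotheses is sound, but the route is genuinely different from the one the paper takes. You reduce the elliptic case to the general orthogonal-polynomial Theorem~\ref{LDPgeneral} by instantiating $\nu$ as the Fubini--Study measure and $\phi(z)=\log(1+|z|^2)$, then checking non-thinness of the compactified support (here $\mathcal{S}^2$ itself, connected, so non-thin by \cite[Theorem~3.8.3]{ransford1995potential}) and the Bernstein--Markov property, and quoting the theorem. The paper instead proves the elliptic case directly in Section~4 by rerunning the four-step programme established for Kac polynomials (explicit law of the roots via the change-of-basis Jacobian $|A_n|^2$, push-forward to $\mathcal{S}^2$, weak LDP on the sphere, contraction back to $\CC$), and only afterwards, in Section~5, proves the general Theorem~\ref{LDPgeneral}. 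Your reduction is logically legitimate since Section~5 does not rely on Section~4, and the paper itself notes that elliptic polynomials are the Zeitouni--Zelditch setting with complex Cauchy measure and weight $\log(1+|z|^2)$; the paper's direct treatment simply avoids a forward reference and lets the reader see the compactification machinery on a second concrete model before the abstract one. Where the two arguments genuinely differ in method is the Bernstein--Markov bound: your coefficient-level Cauchy--Schwarz together with $\sum_{k}\binom{n}{k}|z|^{2k}=(1+|z|^2)^n$ gives $\sup_{\CC}|P|^2e^{-n\phi}\le(n+1)\|P\|_{L^2}^2$ very cleanly, whereas the paper's Lemma~\ref{BMelliptique} obtains the identical constant via the reproducing kernel $K_n$ and $\|K_n(z,\cdot)\|_{L^2}^2=(n+1)(1+|z|^2)^n$; the two are essentially the same estimate, and yours is arguably the more transparent presentation.
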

\begin{theorem}[Large deviation principle for real elliptic polynomials] \label{LDPelliptiquereel}
	Let $\mu_n$ be the empirical measure of the gas	\eqref{elliptiquereel}.
	$(\mu_n)_{n\in \NN}$ satisfies a large deviation principle in $\mathcal{M}_1(\CC)$ with the weak topology, speed $\beta_n$ and good rate function:
	\begin{align*}
	\tilde{I}_E(\mu)= \begin{cases}
	\frac{1}{2}(I_E(\mu)-\inf I_E) & \text{if $\mu$ is invariant under the map }z \mapsto \bar{z} \\
	\infty & \text{otherwise.}
	\end{cases} 
	\end{align*}
	This means that for any Borel set $A \subset \mathcal{M}_1(\CC)$ we have:
	\begin{equation*}
	-\inf_{\mathrm{Int}A} \tilde{I}_E \leq \varliminf_{n\to\infty} \frac{1}{\beta_n} \log \PP(\mu_n \in A) \leq \varlimsup_{n\to\infty} \frac{1}{\beta_n} \log \PP(\mu_n \in A) \leq -\inf_{\mathrm{Clo}A} \tilde{I}_E.
	\end{equation*}
\end{theorem}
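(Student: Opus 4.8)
The plan is to run the argument for the complex elliptic LDP (Theorem~\ref{LDPelliptique}) together with the modifications that turn the complex orthogonal LDP (Theorem~\ref{LDPgeneral}) into its real version (Theorem~\ref{LDPgeneralreel}) --- just as the real Kac LDP (Theorem~\ref{LDPKacreel}) is deduced from the real orthogonal one. First I would push the problem onto the sphere $S^2\subset\RR^3$ by inverse stereographic projection. Using the identity $\|x-y\|_{S^2}^2=|z-w|^2/\big((1+|z|^2)(1+|w|^2)\big)$ for the chordal distance, the functional $I_E$ becomes, up to the additive constant $-\inf I_E$, a genuine weighted logarithmic-energy functional on the \emph{compact} space $\mathcal{M}_1(S^2)$: it is lower semicontinuous with compact sublevel sets, and the weight $(1+|z|^2)^{-n}$ against $\frac{d\ell_{\CC}(z)}{\pi(1+|z|^2)^2}$ becomes the $n$-th power of a continuous weight against the uniform measure on $S^2$, for which the Bernstein-Markov property holds; the transfer of the resulting LDP back to $\mathcal{M}_1(\CC)$ is as in Theorem~\ref{LDPelliptique}. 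This is where the elliptic-specific obstacle --- the support $\CC$ is not compact --- is handled.

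For the upper bound I would fix $k\in\{0,\dots,\lfloor n/2\rfloor\}$ and estimate the $k$-th component of \eqref{elliptiquereel} on its own. As in the complex case, the Bernstein-Markov property lets one replace the confinement term $\frac{n+1}{n^2}\log\int \frac{\prod_i|z-z_i|^2}{(1+|z|^2)^n}\,\frac{d\ell_{\CC}(z)}{\pi(1+|z|^2)^2}$ by its continuous analogue up to an error $o(1)$, and to bound $Z_{n,k}$ from below by evaluating the integrand at a well-spread conjugation-symmetric configuration. This gives, uniformly in $k$ (the uniform control of the $Z_{n,k}$'s being what the extra assumptions announced in the introduction provide, and being directly checkable for this weight),
\[
\frac{1}{\beta_n}\log\left(\frac{1}{Z_{n,k}}\int_{A}e^{-\beta_n\frac{1}{2} H_E}\,d\ell_{n,k}\right)\ \le\ -\tfrac12\inf_{\overline{A}\cap\mathcal{S}}\big(I_E-\inf I_E\big)+o(1),
\]
where $\mathcal{S}\subset\mathcal{M}_1(\CC)$ is the set of measures invariant under $z\mapsto\bar z$. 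Summing the $\lfloor n/2\rfloor+1$ components and using $\log n=o(\beta_n)$ gives the upper bound with rate function $\tilde{I}_E$; the restriction to $\mathcal{S}$ is automatic, since every configuration charged by \eqref{elliptiquereel} is conjugation symmetric, so $\PP(\mu_n\in A)=0$ whenever $A\cap\mathcal{S}=\emptyset$ and $\tilde{I}_E=\infty$ there.

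For the lower bound I would fix a symmetric $\mu$ with $I_E(\mu)<\infty$, discretize only the closed upper half-plane and reflect, producing configurations with about $n\,\mu(\RR)$ points on the real line and the remaining ones in conjugate pairs --- this selects a single value $k=k_n$ --- whose empirical measures converge weakly to $\mu$ and along which $\tfrac12 H_E\to\tfrac12 I_E(\mu)$; the discretization and the energy estimate are those already used for Theorem~\ref{LDPelliptique}. Keeping only the $k_n$-th term of the mixture yields $\varliminf\frac{1}{\beta_n}\log\PP(\mu_n\in A)\ge-\tilde{I}_E(\mu)$ for every $\mu$ in an open set $A$, and optimizing over $\mu$ gives the lower bound. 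The factor $\tfrac12$ in $\tilde{I}_E$ is precisely the factor $\tfrac12$ in front of $H_E$ in \eqref{elliptiquereel}, since at speed $\beta_n\gg n$ the volume corrections coming from $d\ell_{n,k}$ contribute only at scale $(n\log n)/\beta_n\to 0$; and $\inf\tilde{I}_E=0$ because the minimizer of $I_E$ --- the rotation-invariant Fubini-Study measure --- belongs to $\mathcal{S}$.

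The main obstacle I expect lies in the word ``uniformly'' above: one must simultaneously rule out mass escaping in the non-compact variable and control the $\lfloor n/2\rfloor+1$ normalizing constants $Z_{n,k}$, checking that every error term in the Bernstein-Markov comparison is $o(\beta_n)$ with a bound independent of $k$. Transporting everything to the compact sphere $S^2$ removes the escape-of-mass issue and the Bernstein-Markov estimate supplies the comparison, so the real work is verifying that these estimates survive the superposition over all symmetry sectors; the rest is a transcription of the complex elliptic and real orthogonal arguments.
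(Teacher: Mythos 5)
Your proposal follows the same four-step route as the paper: push everything to the sphere $\mathcal{S}^2$ by inverse stereographic projection, use the Bernstein--Markov property (Lemma \ref{BMelliptique} for this weight) to get the upper bound uniformly over the $\lfloor n/2\rfloor+1$ symmetry sectors, build conjugation-symmetric configurations for the lower bound, and close with a uniform control of the constants $Z_{n,k}$ before contracting back to $\mathcal{M}_1(\CC)$. Two small divergences are worth naming. First, in the lower bound you propose to select $k_n\approx n(1-\mu(\RR))/2$ conjugate pairs so as to put roughly $n\,\mu(\RR)$ points on the real axis; the paper instead first reduces to $\sigma$ absolutely continuous with bounded density on a rectangle (hence $\sigma(\RR)=0$), and then always retains only the term $k=\lfloor n/2\rfloor$ of the mixture (zero or one real root), which avoids having to track a $k_n$ depending on $\mu$. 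Both work; the paper's is a touch simpler after the reduction step. Second, your claim that the uniform control of the $Z_{n,k}$'s is ``directly checkable'' glosses over the actual mechanism used in the paper: the constants involve the change-of-basis Jacobian $|A_n|$, and the paper obtains $\lim \beta_n^{-1}\log|A_n|^2 = -\inf I_{E,\mathcal{S}^2}$ as a \emph{consequence} of the complex elliptic LDP applied to the whole space, and only then combines this with the explicit $k$-dependent prefactors (controlled as in Proposition \ref{limits}) to get the required uniformity in $k$. So for $\beta_n=n^2$ the control is indeed available, but via a bootstrap from Theorem \ref{LDPelliptique}, not directly; for general $\beta_n$ it is an additional hypothesis, as the introduction warns.
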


\subsection*{Outline of the article}

We will give a full proof of the results for Kac polynomials, and then we will show how to adapt the proof for elliptic and orthogonal polynomials. The proofs of the previous theorems are similar, and will follow these steps:
\begin{enumerate}
\renewcommand\labelenumi{$\arabic{enumi})$}
\item Compute the distribution of the roots on $\CC^n$;
\item Use of inverse stereographic projection to push-forward every object on $\mathcal{S}^2$, the sphere in $\RR^3$ centered on $(0,0,\frac{1}{2})$ and of radius $1/2$;
\item Prove a large deviation principle in $\mathcal{M}_1(\mathcal{S}^2)$;

\item Use of contraction principle to obtain the large deviation principle in $\mathcal{M}_1(\CC)$.
\end{enumerate}
We use inverse stereographic projection because, as $\mathcal{M}_1(\mathcal{S}^2)$ is compact, a weak large deviation principle is equivalent to a full large deviation principle, without proving exponential tightness. \cite[Lemma 1.2.18]{dembozeitouni}

In Section 2 we introduce the objects that will be studied in the article.
In Section 3 we give a dettailed proof of the result for Kac polynomials following the steps given above. Section 4 is about elliptic polynomials. As the proof is nearly the same, we focus on what should be changed to import the proof from the previous section. In Section 5 we prove the general result that was originally proved by Zeitouni and Zelditch in \cite{zeitounizelditch} and we extend it for real Gaussian coefficients.

In contrast, the article \cite{zeitounizelditch} has a more geometric and intrisinc approach. The scalar product \eqref{produitscal} is related to a notion of curvature on $\CC \PP^1$. The zeros are seen as elements of $\CC \PP^1$ and the rate function is expressed in terms of Green function and Green energy associated to this geometric setup.

\section{Definitions and notations}
We give some definitions that will be useful in the article.

\begin{definition}[Logarithmic potential, logarithmic energy]
We call the logarithmic potential of a measure $\mu \in \mathcal{M}_1(\CC)$ the function:
$$U^{\mu}: z \mapsto \int -\log |z-w|d\mu(w).$$
We also define the logarithmic energy of a measure $\mu \in \mathcal{M}_1(\CC)$:
$$\mathcal{E}(\mu)= -\iint \log|z-w|d\mu(z)d\mu(w)$$
and $J: \mathcal{M}_1(\CC) \rightarrow \RR\cup \{\infty\}$ defined by
$$J(\mu)= \sup_{z\in S}\int \log|z-w|\mu(w)$$
	where $S$ is the unit circle in $\CC$.
\end{definition}

\begin{definition}[Discrete logarithmic energy.]
	Let $\mu_n=\sum_{i=1}^{n} \delta_{z_i}$ then we write:
	\begin{equation}
	\mathcal{E}_{\neq}(\mu_n)= -\frac{1}{n^2} \sum_{i\neq j} \log |z_i-z_j|= -\int_{\neq} \log|z-w|d\mu_n(z)d\mu_n(w)
	\end{equation}
	where $\displaystyle \int_{\neq}$ stands for the off-diagonal integral .
\end{definition}
We will use the same notation for measures on $\CC$ or on $\mathcal{S}^2$.

Let us define now the inverse stereographic projection that will be the key tool in this article.
\begin{definition}[Inverse stereographic projection] \label{stereo}
	Let $\mathcal{S}^2$ be the sphere in $\RR^3$ of center $(0,0,1/2)$ and radius $1/2$. We call the point $N=(0,0,1)$ the north pole. Let $T: \CC \rightarrow \mathcal{S}^2$ the inverse stereographic projection $$T(z)= \left(\frac{\Re(z)}{1+|z|^2}, \frac{\Im(z)}{1+|z|^2}, \frac{|z|^2}{1+|z|^2}\right).$$
We have the following relations, valid for any $z$ and $w$ in $\CC$:
\begin{equation}  \label{relations}	 |z-w|^2= \frac{|T(z)-T(w)|^2}{(1-|T(z)|^2)(1-|T(w)|^2)}
\end{equation}
\begin{equation} \label{relation2}
	1-|T(z)|^2= \frac{1}{1+|z|^2}
\end{equation}	
where if $x\in \RR^3$, $|x|$ is its Euclidean norm and when $z$ is a complex number, $|z|$ is its modulus. The same notation holds for the norm in $\CC$ and the norm in $\RR^3$.
\end{definition}
The first relation can be found in \cite[lemma $3.4.2$]{ashnovinger}, and the second relation is obtained from the first one by squaring, taking the limit as $w$ tends to infinity and using the Pythagorean theorem.

\begin{figure}[!h]
	\centering
	\includegraphics[scale=1]{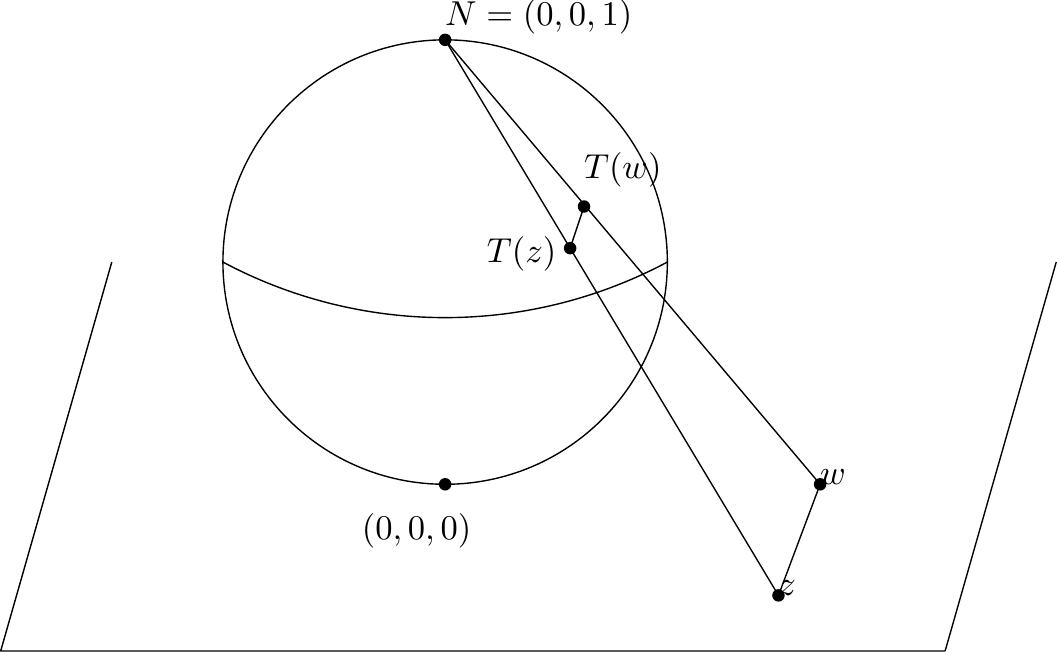}
	\caption{\label{proj} Inverse stereographic projection.}
\end{figure}

To avoid confusions between what lies in $\CC$ and what lies in $\RR$, we will only use the letters $z,w$ for complex numbers and the letters $x,y$ for vectors in $\RR^3$.

\begin{definition}[Push-forward of the objects on the sphere]
We define the push-forward by $T$ of the empirical measure:	\[\bar{\mu_n} = T^*\mu_n= \frac{1}{n} \sum_{i=1}^{n} \delta_{T(z_i)} .\]
Let $\mu \in  \mathcal{M}_1(\mathcal{S}^2)$, we call its logarithmic potential on the sphere the function: $$U^{\mu}_{\mathcal{S}^2}(x)=\int- \log |x-y|d\mu(y).$$ $U^{\mu}_{\mathcal{S}^2}$ takes its values in $[-\infty, \infty)$.
We define on $\mathcal{M}_1(\mathcal{S}^2)$ the function:			
\begin{equation*}
	\mathcal{E}_{\mathcal{S}^2}(\nu)= - \iint \log |x-y|d\nu(x)d\nu(y).
\end{equation*}
\end{definition}
The function $U^{\mu}_{\mathcal{S}^2}$ is called logarithmic potential. It inherits its name from $U^{\mu}$ as it is the analog formula on the sphere. The name logarithmic potential is not really appropriate as this notion is already defined on the sphere in potential theory, but it is convenient as the formulas are the same.
\section{Large deviations for Kac polynomials}

This section deals with the Coulomb gases \eqref{kaccomplexe} and \eqref{kacreel}. We prove Theorems \ref{LDPKac} and \ref{LDPKacreel}.

\subsection{Step 1: Distribution of the roots}
\begin{theorem}[Distribution of the roots in for the complex case]
Let $P_n= \sum_{k=0}^{n} a_k X^k$, the law of $(z_1, \dots, z_n)$ is absolutely continuous with respect to the Lebesgue measure on $\CC^n$ with density:
$$\frac{1}{Z_n}\frac{\prod_{i<j}|z_i-z_j|^2}{(\int\prod_{i=1}^N |z-z_i|^2 d\nu_S)^{n+1}}=\frac{1}{Z_n}\exp \left( -\beta_n \left[  \mathcal{E}_{\neq}(\mu_n) + \frac{n+1}{n^2}\log \int \prod_{i=1}^{n}|z-z_i|^2d\nu(z) \right]\right)$$
where $Z_n$ is a normalizing constant.
\end{theorem}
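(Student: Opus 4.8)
The plan is to compute the law of the root vector $(z_1,\dots,z_n)$ by relating it to the law of the coefficient vector $(a_0,\dots,a_n)$ via the coefficients-to-roots map, and then push forward the Gaussian density through this map. First I would recall that writing $P_n(X)=a_n\prod_{i=1}^n(X-z_i)$ identifies a polynomial of degree $n$ with its leading coefficient $a_n$ together with its unordered set of roots; equivalently, $a_k/a_n=(-1)^{n-k}e_{n-k}(z_1,\dots,z_n)$ where $e_j$ is the $j$-th elementary symmetric polynomial. So the map $(a_n,z_1,\dots,z_n)\mapsto(a_0,\dots,a_n)$ is a polynomial map, and the key computation is its Jacobian. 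The classical fact (essentially the one underlying the Ginibre-type computations) is that the Jacobian determinant of $(a_0,\dots,a_{n-1})\mapsto(z_1,\dots,z_n)$ at fixed $a_n$ equals $\pm a_n^{\,n}\prod_{i<j}(z_i-z_j)$, so that passing to complex Lebesgue measure (each complex variable contributing the modulus squared) produces the Vandermonde factor $\prod_{i<j}|z_i-z_j|^2$. Since the complex Gaussian density of $(a_0,\dots,a_n)$ is proportional to $\exp(-\sum_k|a_k|^2)=\exp(-\|P_n\|_{L^2}^2)$ by the orthonormality of the monomials for $\langle\cdot,\cdot\rangle$ in \eqref{psKac}, and $\|P_n\|_{L^2}^2=|a_n|^2\int\prod_{i=1}^n|z-z_i|^2\,d\nu_S(z)$, integrating out the single remaining variable $a_n\in\CC$ — a Gaussian integral of the form $\int_{\CC}|a_n|^{2n}\exp(-|a_n|^2 c)\,d\ell_{\CC}(a_n)=\Gamma(n+1)\,c^{-(n+1)}$ with $c=\int\prod_i|z-z_i|^2\,d\nu_S$ — yields exactly the claimed density
$$\frac{1}{Z_n}\frac{\prod_{i<j}|z_i-z_j|^2}{\left(\int\prod_{i=1}^n|z-z_i|^2\,d\nu_S(z)\right)^{n+1}},$$
the $n!$ from unordering and the $\Gamma(n+1)$ being absorbed into $Z_n$.

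It then remains to rewrite this density in the exponential form. Here one just observes that $\prod_{i<j}|z_i-z_j|^2=\exp\bigl(\sum_{i\neq j}\log|z_i-z_j|\bigr)=\exp\bigl(-n^2\,\mathcal{E}_{\neq}(\mu_n)\bigr)$ by the definition of the discrete logarithmic energy, and that $\left(\int\prod_i|z-z_i|^2\,d\nu_S\right)^{-(n+1)}=\exp\bigl(-(n+1)\log\int\prod_i|z-z_i|^2\,d\nu_S\bigr)=\exp\bigl(-n^2\cdot\frac{n+1}{n^2}\log\int\prod_i|z-z_i|^2\,d\nu_S\bigr)$; with $\beta_n=n^2$ this is precisely $\exp(-\beta_n H(z_1,\dots,z_n))$ for the Hamiltonian \eqref{hamiltonienkac}. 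Combining the two factors gives the stated identity.

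The main obstacle — really the only nontrivial point — is the Jacobian computation of the coefficients-to-roots map and the careful bookkeeping of combinatorial constants ($n!$, $\Gamma(n+1)$, powers of $\pi$ from the complex Gaussian normalization) that all get swept into $Z_n$; this is where sign conventions and the passage from a holomorphic Jacobian to the real $2n$-dimensional Lebesgue Jacobian (introducing absolute values squared) must be handled correctly. One clean way to organize it is to first treat $(a_0,\dots,a_n)$ with $a_n\neq 0$ and work on the chart where the polynomial is monic after dividing by $a_n$, compute the Jacobian of $(e_1,\dots,e_n)\mapsto(z_1,\dots,z_n)$ (which is the Vandermonde $\prod_{i<j}(z_j-z_i)$ up to sign, a standard symmetric-function identity), and then chain with the linear map $a_k\mapsto e_{n-k}$; alternatively one can cite that this exact density appears in the literature on random polynomial roots (e.g. Hammersley, or the Kostlan–Shub–Smale / Edelman–Kostlan line of work). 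Since the statement is flagged as "Step 1" and is classical, I would keep the Jacobian argument brief and reference it, devoting the written proof mostly to the rewriting in Hamiltonian form.
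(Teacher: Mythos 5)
Your proposal follows essentially the same route as the paper: change variables from $(z_1,\dots,z_n,a_n)$ to $(a_0,\dots,a_n)$ with Jacobian $|a_n|^{2n}\prod_{i<j}|z_i-z_j|^2$, rewrite $\sum_k|a_k|^2=|a_n|^2\int\prod_i|z-z_i|^2\,d\nu_S$ via orthonormality of the monomials for \eqref{psKac}, and integrate out $a_n$ to produce the $(n+1)$-st power in the denominator. The paper cites \cite[Lemma 1.1.1]{houghtkris} for the Jacobian of the roots-to-coefficients map, which is the same classical fact you invoke.
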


\begin{proof}
	Let $p(z)= z^n + b_{n-1}z^{n-1}+ \dots + b_0 = \prod\limits_{i=1}^n (z-z_i)$. Then the transformation
\begin{equation*}
\begin{array}{ccccc}
	F & : & \CC^{n} & \to & \CC^{n} \\
	& & (z_1,\dots,z_n) & \mapsto & (b_0, \dots , b_{n-1}) \\
\end{array}
\end{equation*}
has Jacobian determinant $\prod_{i<j} |z_i-z_j|^2$ \cite[Lemma 1.1.1]{houghtkris}.
We compute the law of the random vector $(z_1,\dots,z_n,a_n)$.
The density of the law of $(a_0,\dots, a_n)$ is $$\frac{1}{\pi^{n+1}}e^{-\sum_{k=0}^{n}|a_k|^2}$$
We consider now the change of variables:
\begin{equation*}
\begin{array}{ccccc}
G & : & \CC^{n+1} & \to & \CC^{n+1} \\
& & (z_1,\dots,z_n,a_n) & \mapsto & (a_0,\dots,a_{n-1},a_n) \\
\end{array}
\end{equation*}
whose Jacobian determinant is $|a_n|^{2n}\prod_{i<j}|z_i-z_j|^2$, as $b_i=a_i/a_n$. 
Hence, the law of $(z_1,\dots,z_n,a_n)$ is absolutely continuous with respect to the Lebesgue measure on $\CC^{n+1}$. We want to rewrite the density of the random vector $(a_0,\dots,a_n)$ with the new variables $(z_1,\dots,z_n,a_n)$. We notice that if $P=\sum_{k=0}^{n}a_k X^k$ then:
\begin{equation}\label{eq:trick}
\sum_{k=0}^{n} |a_k|^2= \int |P(z)|^2d\nu_{S}(z)=\int |a_n|^2 \prod_{k=1}^n|z-z_k|^2d\nu_{S}(z)
\end{equation}
	where $\nu_S$ is the uniform probability measure on the unit circle of $\CC$. This relation comes from the fact that the canonical basis of $\CC[X]$ is orthonormal for the scalar product \eqref{psKac}.
The density of the law of $(z_1,\dots,z_n,a_n)$ is:
	
	\[\frac{|a_n|^{2n}\prod_{i<k}|z_i-z_j|^2}{\pi^n}\exp \left(-|a_n|^2\int  \prod_{k=1}^n|z-z_k|^2d\nu_{S}(z)\right).\]
We only have to integrate in the variable $a_n$ to obtain the law of $(z_1,\dots, z_n)$.
\end{proof}

\begin{theorem}[Distribution of the roots in the real case.] \label{the:rootsexpo}
The distribution of the random vector $(z_1, \dots , z_n)$ of the roots of $P_n$ in the real case is given by:
	
\begin{equation*}
\sum_{k=0}^{\lfloor n/2 \rfloor} \frac{2^k \Gamma(\frac{n+1}{2})}{k!(n-2k)!\pi^{(n-1)/2}} \frac{ \prod_{i<j}|z_i-z_j|}{ (\int\prod_{i=1}^n |z-z_i|^2 d\nu_S)^{(n+1)/2}} d\ell_{n,k}(z_1,\dots,z_n).
\end{equation*}
This law can be re-written as: 
\begin{equation*}
\sum_{k=0}^{\lfloor n/2 \rfloor} \frac{1}{Z_{n,k}}  \exp \left(-\beta_n H(z_1,\dots,z_n)\right) d\ell_{n,k}(z_1,\dots,z_n)
\end{equation*}
where \begin{equation*} Z_{n,k}=\frac{k!(n-2k)!\pi^{\frac{n+1}{2}}}{2^k \Gamma(\frac{n+1}{2})}.
\end{equation*}
\end{theorem}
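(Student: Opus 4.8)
The plan is to mimic the proof just given in the complex case, the only genuinely new ingredient being a Jacobian computation adapted to the real structure of the roots. First I would stratify the space of monic real polynomials of degree $n$ (identified with $\RR^n$ via the coefficients $b_0,\dots,b_{n-1}$) according to the number $k\in\{0,\dots,\lfloor n/2\rfloor\}$ of pairs of complex-conjugate roots: on the $k$-th stratum $P_n$ has $n-2k$ real roots $\lambda_1,\dots,\lambda_{n-2k}$ and $k$ conjugate pairs $\mu_j,\bar\mu_j$ with $\Im\mu_j>0$, and the complement of these strata (polynomials with a multiple root) is Lebesgue-negligible, so it may be ignored.

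On a fixed stratum, list the $n$ roots as $z_1,\dots,z_n$ with $z_{n-k+j}:=\bar z_{n-2k+j}$ and consider the map $(\lambda_1,\dots,\lambda_{n-2k},\Re\mu_1,\Im\mu_1,\dots,\Re\mu_k,\Im\mu_k)\mapsto(b_0,\dots,b_{n-1})$ coming from $X^n+\sum_j b_jX^j=\prod_{l}(X-\lambda_l)\prod_j\bigl(X^2-2\Re\mu_j\,X+|\mu_j|^2\bigr)$. The key step is the identity
\[\Bigl|\det\tfrac{\partial(b_0,\dots,b_{n-1})}{\partial(\lambda,\Re\mu,\Im\mu)}\Bigr|=2^{k}\prod_{i<j}|z_i-z_j|,\]
which I would obtain by expressing the Jacobian of ``coefficients of a product of monic polynomials'' as the product of the resultants of the factors — $\mathrm{Res}(X-\lambda_l,X-\lambda_{l'})=\pm(\lambda_l-\lambda_{l'})$, $\mathrm{Res}(X-\lambda_l,X^2-\cdots_j)=|\lambda_l-\mu_j|^2$, $\mathrm{Res}(X^2-\cdots_j,X^2-\cdots_{j'})=|\mu_j-\mu_{j'}|^2|\mu_j-\bar\mu_{j'}|^2$ — together with the elementary Jacobian $4\Im\mu_j=2|\mu_j-\bar\mu_j|$ of $(\Re\mu_j,\Im\mu_j)\mapsto(-2\Re\mu_j,|\mu_j|^2)$ inside each quadratic factor; regrouping these factors reproduces $\prod_{i<j}|z_i-z_j|$ over all $n$ roots up to the stated power of two. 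This is the real analogue of \cite[Lemma 1.1.1]{houghtkris}, and I expect it to be the only point requiring real care.

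With this in hand the rest parallels the complex computation. Introduce $a_n$ as an auxiliary variable: $(b_0,\dots,b_{n-1},a_n)\mapsto(a_0,\dots,a_n)=(a_nb_0,\dots,a_nb_{n-1},a_n)$ has Jacobian $|a_n|^{n}$, and the real Gaussian density of $(a_0,\dots,a_n)$ is $\pi^{-(n+1)/2}e^{-\sum_j a_j^2}$, so on the $k$-th stratum the joint law of $(\lambda,\mu,a_n)$ has density proportional to $|a_n|^{n}\,2^{k}\prod_{i<j}|z_i-z_j|\,e^{-\sum_j a_j^2}$ against Lebesgue measure. The orthonormality identity \eqref{eq:trick} holds verbatim and gives $\sum_j a_j^2=a_n^2\int\prod_i|z-z_i|^2\,d\nu_S$; integrating out $a_n\in\RR$ via $\int_\RR|a_n|^{n}e^{-a_n^2 M}\,da_n=\Gamma(\tfrac{n+1}{2})\,M^{-(n+1)/2}$ produces the factor $\bigl(\int\prod|z-z_i|^2\,d\nu_S\bigr)^{-(n+1)/2}$ and, together with the Vandermonde $\prod_{i<j}|z_i-z_j|$, exactly the exponential weight $\exp\bigl(-\tfrac{\beta_n}{2}H\bigr)$ associated with the Hamiltonian \eqref{hamiltonienkac}. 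Finally, passing from this ordered half-plane parametrisation to the symmetric measure $d\ell_{n,k}$ of the statement — relabelling the $n-2k$ real roots and the $k$ conjugate pairs, and recording each pair by one of its two complex representatives — contributes the combinatorial prefactor, which assembles (with the surviving powers of $2$ and $\pi$) into $Z_{n,k}=k!\,(n-2k)!\,\pi^{(n+1)/2}/(2^{k}\Gamma(\tfrac{n+1}{2}))$; summing over $k$ then yields the claimed mixture. The integration and the combinatorial bookkeeping are routine; the resultant computation of the second paragraph is the substantive step.
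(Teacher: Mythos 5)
Your argument is correct and runs essentially parallel to the paper's: both stratify the real-coefficient polynomials by the number $k$ of conjugate pairs, introduce $a_n$ as an auxiliary variable, exploit the orthonormality identity \eqref{eq:trick} to rewrite $\sum_j a_j^2$ as $a_n^2\int\prod_i|z-z_i|^2\,d\nu_S$, and then integrate out $a_n$ over $\RR$ (the paper's $d\ell_{\CC}(a_n)$ is a slip; the leading coefficient is real). Where the two proofs genuinely diverge is the intermediate joint density of $(z_1,\dots,z_n,a_n)$: the paper obtains it by a direct citation to Zaporozhets \cite{zaporozhets}, while you rederive it from scratch by factoring the real monic polynomial into linear and irreducible quadratic pieces and decomposing the roots-to-coefficients Jacobian as a product of resultants of the factors, supplemented by the local Jacobian $4\Im\mu_j=2|\mu_j-\bar\mu_j|$ of $(\Re\mu_j,\Im\mu_j)\mapsto(-2\Re\mu_j,|\mu_j|^2)$ inside each quadratic. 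This regrouping does reproduce $2^k\prod_{i<j}|z_i-z_j|$ (one checks $n=2$, $k\in\{0,1\}$ directly), and the one-dimensional Gaussian moment $\int_\RR|a_n|^ne^{-Ma_n^2}\,da_n=\Gamma\bigl(\tfrac{n+1}{2}\bigr)M^{-(n+1)/2}$, together with the $k!\,(n-2k)!$ relabelling count and the Gaussian normalisation $\pi^{-(n+1)/2}$, assembles into the stated $Z_{n,k}$. What your route buys is a self-contained and elementary proof of the real Jacobian identity, which is exactly what the paper black-boxes; what the paper buys by citing Zaporozhets is brevity. Both are sound.
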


The density in the real case is nearly the same as in the complex case, except for the factor $1/2$ in the exponent. In the complex case, the vector of the zeros is an element of  $\RR^{2n}$ while in the real case, we can see the zeros as an element of $\RR^{n}$.

\begin{proof}
	As the random vector $(a_0, \dots , a_n)$ has a joint distribution: $$\frac{1}{(\pi)^{\frac{n+1}{2}}}\exp \left( -\sum_{k=0}^{n}|a_k|^2\right) da_0 \dots da_n. $$ we can use Zaporozhets' computation \cite{zaporozhets} in order to express the distribution of $(z_1, \dots , z_n, a_n)$. We use again the relation \eqref{eq:trick} in order to simplify the expression of the distribution of the roots and we obtain:

	\begin{equation*}
	\sum_{k=0}^{\lfloor n/2 \rfloor} \frac{2^k}{k!(n-2k)!\pi^{\frac{n+1}{2}}}  |a_n|^n \prod_{i<j}|z_i-z_j| e^{-\displaystyle\int |a_n|^2 \prod_{i=1}^n|z-z_i|^2d\nu_{S}(z)}  d\ell_{n,k}(z_1,\dots,z_n) d\ell_{\CC}(a_n).
	\end{equation*}
We integrate with respect to $a_n$, which ends the proof.
\end{proof}

\begin{remark}[Symmetries of the problem]
	It is easy to check that the law of the zeros is invariant under rotation as a Gaussian vector is invariance by rotation. The distribution of the zeros is also invariant under the mapping $z \mapsto 1/z$. This comes from the fact that $(a_0,\dots, a_n)$ has the same distribution as $(a_n,\dots,a_0)$, but if we call $z_1,\dots,z_n$ the zeros of $\sum_{k=0}^{n}a_k X^k$, then the zeros of $\sum_{k=0}^{n}a_{n-k}X^k$ are $1/z_1, \dots,1/z_n$.
\end{remark}

\begin{proposition}[Uniform control of $Z_{n,k}$ for $\beta_n=n^2$]\label{limits}
Let $Z_{n,k}$ given in \ref{the:rootsexpo}, we have:
\begin{equation*}
\lim_{n\to \infty} \sup_{k \in \{1,\dots,n\}} \frac{1}{n^2} |\log Z_{n,k} |= 0
\end{equation*}
which implies that
\begin{equation*}
\lim_{n\to\infty} \frac{1}{\beta_n} \log \max_{k} Z_{n,k} = \lim_{n\to\infty} \frac{1}{\beta_n} \log \min_{k} Z_{n,k}= 0.
\end{equation*}
\end{proposition}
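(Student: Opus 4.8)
We have $Z_{n,k} = \dfrac{k!\,(n-2k)!\,\pi^{(n+1)/2}}{2^k\,\Gamma\!\left(\frac{n+1}{2}\right)}$, and we want $\sup_{k}\frac{1}{n^2}|\log Z_{n,k}| \to 0$. Let me think about how to do this cleanly.

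The plan is to bound $\log Z_{n,k}$ from above and below by quantities that are $o(n^2)$ uniformly in $k \in \{0,1,\ldots,\lfloor n/2\rfloor\}$. Taking logarithms,
$$\log Z_{n,k} = \log(k!) + \log((n-2k)!) + \tfrac{n+1}{2}\log\pi - \log\Gamma\!\left(\tfrac{n+1}{2}\right).$$
The terms $\frac{n+1}{2}\log\pi$ and $\log\Gamma(\frac{n+1}{2})$ don't depend on $k$; by Stirling, $\log\Gamma(\frac{n+1}{2}) = O(n\log n)$ and $\frac{n+1}{2}\log\pi = O(n)$. So these contribute $O(n\log n) = o(n^2)$. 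For the $k$-dependent terms, I use the trivial bound $0 \le \log(m!) \le m\log m \le n\log n$ valid for any $0 \le m \le n$, applied with $m = k \le n$ and $m = n-2k \le n$. Hence $0 \le \log(k!) + \log((n-2k)!) \le 2n\log n$, uniformly in $k$. Combining, $|\log Z_{n,k}| \le C n\log n$ for a constant $C$ independent of $k$ and $n$, so $\frac{1}{n^2}|\log Z_{n,k}| \le \frac{C\log n}{n} \to 0$. Actually I should double check the sign: $\log Z_{n,k}$ could be negative because of the $-\log\Gamma(\frac{n+1}{2})$ term, but since that term is $O(n\log n)$ in absolute value, and everything else is nonnegative or $O(n)$, the bound $|\log Z_{n,k}| \le Cn\log n$ holds regardless.

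The second displayed assertion is then immediate: since $\frac{1}{n^2}|\log Z_{n,k}| \le \varepsilon_n$ with $\varepsilon_n \to 0$ uniformly in $k$, we have $\frac{1}{\beta_n}\log Z_{n,k} = \frac{1}{n^2}\log Z_{n,k} \in [-\varepsilon_n, \varepsilon_n]$ for all $k$, so both $\frac{1}{\beta_n}\log\max_k Z_{n,k}$ and $\frac{1}{\beta_n}\log\min_k Z_{n,k}$ are squeezed to $0$ (note $\log\max_k Z_{n,k} = \max_k \log Z_{n,k} \le n^2\varepsilon_n$ and $\ge -n^2\varepsilon_n$, similarly for the min).

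There is essentially no obstacle here; the only mild point of care is to state a clean, fully uniform elementary bound $\log(m!) \le m\log m$ (or invoke Stirling with explicit error), and to make sure that the $k$-independent pieces are correctly identified as $o(n^2)$ via Stirling's formula for $\Gamma(\frac{n+1}{2})$. I would write it as a three-line estimate: (i) split $\log Z_{n,k}$ into the four terms above; (ii) bound each factorial term by $n\log n$ and the Gamma term by Stirling; (iii) conclude uniformity and divide by $n^2$.
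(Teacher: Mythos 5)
Your approach coincides with the paper's: write out $\log Z_{n,k}$, bound each summand uniformly in $k$ by an $O(n\log n)$ quantity (via Stirling and elementary factorial bounds), and divide by $n^2$. One small slip: your displayed expansion of $\log Z_{n,k}$ omits the $-k\log 2$ term coming from the factor $2^k$ in the denominator of $Z_{n,k}$; since $0 \le k \le \lfloor n/2\rfloor$, this term is $O(n)$ uniformly in $k$, so including it does not change the conclusion, but the expansion as you wrote it is not an identity.
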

\begin{proof}[Proof of Proposition \ref{limits}]
Using the triangular inequality and bounding $k$ by $n$ gives:
\begin{multline*}
\frac{1}{n^2}|\log Z_{n,k} | \leq \frac{1}{n^2} \log n! + \frac{1}{n^2}\log n! + \frac{n+1}{2n^2} \log \pi + \frac{n}{n^2}\log 2 + \frac{1}{n^2} \log \Gamma (\frac{n+1}{2})
\end{multline*}
The upper bound is uniform in $k$ and tends to $0$ as $n$ goes to infinity, which proves the result.
\end{proof}
This control over the $Z_{n,k}$ constants is very important. This is the reason why we are able to prove large deviations in the real case. For general $\beta_n$, we cannot prove lage deviations without assuming that those limits exist and are equal, not necessarily to zero. This will become clearer in Section \ref{getridofZ}. See \cite{benarouszeitouni} and \cite{ghoshzeitouni} for similar results.

\subsection{Step 2: Large deviations on the unit sphere}
In order to prove the large deviation principles, we are going to use a compactification method introduced in \cite{hardy}. When the potential does not grow faster than a logarithm at infinity, the standard proofs of large deviations principles do not hold. More precisely, exponential tightness of the sequence of measures cannot be proved using the standard techniques presented in \cite{benarousguionnet}, \cite{benarouszeitouni}, \cite{hiaipetz}. The gas we are studying is also weakly confining as the confinement term grows at infinity like $\log(1+|z|^2)$ in each variable.

Using the inverse stereographic projection $T$ \eqref{stereo} we will push the problem on the sphere $\mathcal{S}^2$ in $\RR^3$. As the sphere is a compact set, it is sufficient to prove a weak large deviation principle instead of a full one.
 
\begin{remark}[Push Forward]
In this article, we will use the notation
$T^*\mu$
for the push-forward of the measure $\mu$ by the function $T$. 
\end{remark}

\begin{definition}[Measure on $\mathcal{S}^2$]
	We call $L_{\CC}$ the push-forward of the Lebesgue measure of $\CC$ on $\mathcal{S}^2$ by $T$ and $L_{\RR}$ the push-forward of the Lebesgue measure on $\RR$ by $T$, where $\RR$ is seen as a subspace of $\CC$. We will use the notation:
	$$ dL_{n,k}(x_1,\dots,x_n)= dL_{\RR}(x_1) \dots  dL_{\RR}(x_{n-2k})  dL_{\CC}(x_{n-k}) \dots  dL_{\CC}(x_n).$$
\end{definition}

\begin{proposition}[Pushing the complex case on the sphere]\label{pushcomplexkac}
	Let $(z_1,\dots,z_n)$ be the zeros of $P_n$ in the complex case, then the law of $(T(z_1),\dots,T(z_n))$ is absolutely continuous with respect to the push forward by $T$ of the Lebesgue measure on $\CC$ with density:
	
	$$\frac{\prod_{i<j} |x_i-x_j|^2}{(\int \prod_{j=1}^n |x-x_j|^2 2^n dT^*\nu_S(x))^{n+1}}  \times \prod_{i=1}^{n} (1-|x_i|^2)^2.$$
We call $\kappa_n$ the finite measure:
\begin{equation*}
\kappa_n=\prod_{i=1}^{n} (1-|x_i|^2)^2 dL_{\CC}(x_1)\dots dL_{\CC}(x_n).
\end{equation*}
This law can be written in the form:
\begin{equation}\label{gazsphere}
\frac{1}{Z_n}\exp \left(-\beta_n \left[ \mathcal{E}_{\neq}(\bar{\mu}_n) + \frac{n+1}{n^2}\log \int \prod_{j=1}^n |x-x_j|^2 2^n dT^*\nu_S(x)\right]\right) d\kappa_n.
\end{equation}
\end{proposition}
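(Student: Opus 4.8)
The plan is to get the statement by a single change of variables through the inverse stereographic projection $T$; no further probabilistic input is needed once the density of the roots on $\CC^n$ is in hand. First I would observe that $a_n\neq 0$ almost surely, so $P_n$ has degree exactly $n$, all of its roots are finite, and $(T(z_1),\dots,T(z_n))$ is almost surely a well-defined point of $(\mathcal{S}^2\setminus\{N\})^n$. Since $T$ is a diffeomorphism from $\CC$ onto $\mathcal{S}^2\setminus\{N\}$ and $L_\CC=T^*\ell_\CC$ by definition, the law of $(T(z_1),\dots,T(z_n))$ is absolutely continuous with respect to $L_\CC^{\otimes n}$ (the coordinatewise push-forward of $\ell_{\CC^n}$ by $T$), with density
\[ (x_1,\dots,x_n)\longmapsto p\big(T^{-1}(x_1),\dots,T^{-1}(x_n)\big), \]
where $p$ is the density on $\CC^n$ computed in Step 1. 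Everything then reduces to rewriting this composition in terms of $x_i=T(z_i)$.

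For the numerator $\prod_{i<j}|z_i-z_j|^2$ I would use relation \eqref{relations}, which gives $|z_i-z_j|^2=|x_i-x_j|^2\big((1-|x_i|^2)(1-|x_j|^2)\big)^{-1}$; as each index sits in $n-1$ unordered pairs, this produces a global factor $\prod_{i=1}^n(1-|x_i|^2)^{-(n-1)}$. For the confinement integral I would first use the definition of the push-forward to replace $\int g\,d\nu_S$ by $\int g\circ T^{-1}\,dT^*\nu_S$, then apply \eqref{relations} inside; the point is that $\nu_S$ lives on the unit circle, where $|z|=1$ and hence $1-|T(z)|^2=1/2$ by \eqref{relation2}, so the factor $(1-|T(z)|^2)^{-n}$ coming from the product is the \emph{constant} $2^n$ on the support, and one obtains
\[ \int \prod_{i=1}^n|z-z_i|^2\,d\nu_S(z)=\Big(\prod_{i=1}^n(1-|x_i|^2)^{-1}\Big)\int \prod_{j=1}^n|x-x_j|^2\,2^n\,dT^*\nu_S(x). \]
Raising to the power $n+1$ and dividing, the powers of $(1-|x_i|^2)$ combine as $-(n-1)+(n+1)=2$, which is exactly the claimed density with respect to $L_\CC^{\otimes n}$ (with the same normalizing constant $Z_n$).

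To reach the Gibbs form \eqref{gazsphere} I would then rewrite, using $\beta_n=n^2$ and the definition of $\mathcal{E}_{\neq}$, $\prod_{i<j}|x_i-x_j|^2=\exp\big(\sum_{i\neq j}\log|x_i-x_j|\big)=\exp\big(-\beta_n\,\mathcal{E}_{\neq}(\bar\mu_n)\big)$, rewrite the $(n+1)$-th power in the denominator as $\exp\big(\beta_n\,\tfrac{n+1}{n^2}\log\int\prod_j|x-x_j|^2\,2^n\,dT^*\nu_S(x)\big)$, and absorb $\prod_i(1-|x_i|^2)^2\,dL_\CC^{\otimes n}$ into $d\kappa_n$. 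I do not expect a genuine obstacle: the computation is entirely routine, the only points needing care being the bookkeeping of the conformal factors $(1-|x_i|^2)$ — checking that the surviving exponent is exactly $2$ — and the choice to keep the constant $2^n$ inside the integral so that the denominator has the clean form of the statement; one may also remark in passing that the north pole $N$ (the point at infinity) carries no mass, so working on $\mathcal{S}^2\setminus\{N\}$ costs nothing.
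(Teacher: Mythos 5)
Your proposal is correct and follows the same route as the paper: push the density through $T$, use relation \eqref{relations} for both the Vandermonde and the confinement integral, and exploit $1-|T(z)|^2=1/2$ on the unit circle to extract the constant $2^n$. The only difference is that you make explicit the exponent bookkeeping $-(n-1)+(n+1)=2$ and the well-posedness remark about $a_n\neq 0$, which the paper leaves implicit.
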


\begin{remark}[Identification of the uniform measure on $\mathcal{S}^2$]
	The measure $(1-|x|^2)^2 dL_{\RR}(x)$ on $\mathcal{S}^2$ is proportional to the uniform measure on the sphere. Indeed if we push forward this measure by the stereographic projection $T^{-1}$ we obtain the measure $\frac{1}{(1+|z|^2)^2}d\ell_{\CC}(z)$ which is proportional to the complex Cauchy measure, which is known to be the projection of the uniform measure on the sphere.
\end{remark}

\begin{proof}[Proof of proposition \ref{pushcomplexkac}]
 We will now push the zeros of $P_n$ on the sphere $\mathcal{S}^2$. We compute the law of the vector $(T(z_1), \dots, T(z_n))$. We use the relations \eqref{relations} to obtain:
\begin{equation*}
\prod_{i<j}|z_i-z_j|^2  =\prod_{i<j} \frac{|T(z_i)-T(z_j)|^2}{(1-|T(z_i)|^2)(1-|T(z_j)|^2)}
\end{equation*}
 and that:
 \begin{equation*}
\left(\int\prod_{i=1}^N |z-z_i|^2 d\nu_S(z)\right)^{n+1}=\left(\dfrac{\prod_{i=1}^{n}(1-|T(z_i)|^2)}{\displaystyle{\int}\prod_{i=1}^n |T(z)-T(z_i)|^2 (1-|T(z)|^2)^{-n} d\nu_S(z)}\right)^{n+1}.
 \end{equation*}
We notice that on the unit circle of $\CC$, the function $z \mapsto |T(z)|^2$ is constant equal to $1/2$, so we can write:
 \begin{equation}
 \frac{\prod_{i<j}|z_i-z_j|^2}{(\int\prod_{i=1}^N |z-z_i|^2 d\nu_S(z))^{n+1}}  = \frac{\prod_{i<j} |T(z_i)-T(z_j)|^2}{(\int \prod_{i=1}^n |T(z)-T(z_i)|^2 2^n d\nu_S(z))^{n+1}}  \times \prod_{i=1}^{n} (1-|T(z_i)|^2)^2.
\end{equation}
 \end{proof}

\begin{proposition}[Pushing the real case on the sphere]\label{pushreelkac}
	Let $(z_1,\dots,z_n)$ be the zeros of $P_n$ in the real case, then the law of $(T(z_1),\dots,T(z_n))$ is:
	$$\sum_{k=0}^{\lfloor n/2 \rfloor} \frac{1}{Z_{n,k}} \frac{ \prod_{i<j}|x_i-x_j|\times \prod_{i=1}^{n} (1-|x_i|^2)}{ (\int\prod_{i=1}^N |x-x_i|^2 2^ndT^*\nu_S)^{(n+1)/2}}  dL_{n,k}(x_1,\dots,x_n)$$
We call $\rho_{n,k}$ the finite measure: $$d\rho_{n,k}=\prod_{i=1}^{n} (1-|x_i|^2)dL_{n,k}(x_1,\dots,x_n).$$
This law can be written:
\begin{equation}\label{gazsphere2}
\sum_{k=0}^{\lfloor n/2 \rfloor} \frac{1}{Z_{n,k}}  \exp \left( -\beta_n\left[ \frac{1}{2}	\mathcal{E}_{\neq}(\mu_n)  -\frac{n+1}{2n^2} \log \int\prod_{i=1}^N |x-x_i|^2 2^n dT^*\nu_S(x) \right]\right) d\rho_{n,k}.
\end{equation}
\end{proposition}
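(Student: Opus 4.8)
The plan is to push the description of the law of $(z_1,\dots,z_n)$ on $\CC^n$ given by Theorem~\ref{the:rootsexpo} through the coordinatewise inverse stereographic projection, exactly as was done in the complex case in Proposition~\ref{pushcomplexkac}. Writing $x_i=T(z_i)$, the only new features compared with the complex case are that the density of each term of the mixture carries $\prod_{i<j}|z_i-z_j|$ instead of its square and that the normalising integral appears with exponent $(n+1)/2$ instead of $n+1$; the whole point will be to keep track of the powers of $1-|T(z_i)|^2$ that these half-exponents generate.

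First I would rewrite the numerator. Using relation~\eqref{relations} and the fact that each index occurs in exactly $n-1$ of the pairs $\{i,j\}$,
\[
\prod_{i<j}|z_i-z_j|=\Bigl(\prod_{i<j}|x_i-x_j|\Bigr)\prod_{i=1}^{n}\bigl(1-|x_i|^2\bigr)^{-(n-1)/2}.
\]
For the denominator I would repeat the computation in the proof of Proposition~\ref{pushcomplexkac}: apply \eqref{relations}, use that $z\mapsto|T(z)|^2$ is identically $1/2$ on the unit circle so that $(1-|T(z)|^2)^{-n}=2^n$ there, and change variables via $\int f\circ T\, d\nu_S=\int f\, dT^*\nu_S$, which yields
\[
\int\prod_{i=1}^{n}|z-z_i|^2\, d\nu_S(z)=\frac{1}{\prod_{i=1}^{n}(1-|x_i|^2)}\int\prod_{i=1}^{n}|x-x_i|^2\, 2^n\, dT^*\nu_S(x).
\]
Raising this to the power $(n+1)/2$ and dividing the numerator by it, the exponent of each $1-|x_i|^2$ collapses to $\tfrac{n+1}{2}-\tfrac{n-1}{2}=1$, which produces precisely the factor $\prod_i(1-|x_i|^2)$ in the statement.

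The last step is purely measure-theoretic. The map $T$ is a smooth bijection of $\CC$ onto $\mathcal{S}^2\setminus\{N\}$, and since $T(\bar z)$ is the reflection of $T(z)$ in the plane $\{y=0\}$, applying $T$ to each coordinate respects the structure of $\ell_{n,k}$ ($n-2k$ coordinates on $\RR$ together with $k$ mutually conjugate complex pairs) and pushes $\ell_{n,k}$ onto $L_{n,k}$ by the very definition of $L_{n,k}$. Hence a measure with density $g$ with respect to $\ell_{n,k}$ is pushed forward to the measure with density $g\circ(T^{-1})^{\otimes n}$ with respect to $L_{n,k}$, with no Jacobian factor appearing because the push-forward is taken intrinsically. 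Applying this term by term to the mixture of Theorem~\ref{the:rootsexpo} and inserting the two displays above gives the first formula of the statement; finally, using $\sum_{i<j}\log|x_i-x_j|=-\tfrac{n^2}{2}\mathcal{E}_{\neq}(\bar{\mu}_n)$ and absorbing $\prod_i(1-|x_i|^2)$ into the reference measure $\rho_{n,k}$ rewrites the law in the exponential form~\eqref{gazsphere2}.

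I do not anticipate a real obstacle: the argument is the same bookkeeping already performed for Proposition~\ref{pushcomplexkac}. The two points that genuinely deserve attention are the exponent arithmetic $\tfrac{n+1}{2}-\tfrac{n-1}{2}=1$ --- this is why the power of $1-|x_i|^2$ is $1$, so that $\rho_{n,k}$ (rather than $L_{n,k}$ or a half-power weight) is the natural reference measure --- and the verification that $T^{\otimes n}$ sends $\ell_{n,k}$ to $L_{n,k}$, which rests on the conjugation symmetry peculiar to the real model.
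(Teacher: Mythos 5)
Your proof is correct and follows the same route as the paper, which simply defers to Proposition~\ref{pushcomplexkac}: apply relations~\eqref{relations}--\eqref{relation2} to numerator and denominator separately, observe that $(1-|T(z)|^2)^{-n}=2^n$ on the unit circle, and track the powers of $1-|x_i|^2$, which in the real case collapse via $\tfrac{n+1}{2}-\tfrac{n-1}{2}=1$. One small remark: when you absorb $\prod_i(1-|x_i|^2)$ into $\rho_{n,k}$ and exponentiate, the sign in front of $\tfrac{n+1}{2n^2}\log\int\cdots$ should come out as $+$ (consistent with the complex-case display~\eqref{gazsphere} and with $(\int\cdots)^{-(n+1)/2}=e^{-\frac{n+1}{2}\log\int\cdots}$); the $-$ sign printed in~\eqref{gazsphere2}, as well as the unbarred $\mu_n$ there, appear to be typos in the paper's statement, and your computation yields the correct version.
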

The proof of this proposition is exactly the same as the one of Proposition \ref{pushcomplexkac}.

The measure $T^*\nu_S$ is the uniform measure on the equator of the sphere $\mathcal{S}^2$. Seeing those measures on the sphere emphasizes the symmetries of the problem as the invariance with respect to inversion corresponds to the exchange of north and south pole of the sphere.

$\rho_n$ is a finite measure. As $\rho_n$ is a product measure, we only have to see that every measure is finite. There are two types of measures in this product: 
$$(1-|x|^2)dL_{\RR}(x) = \frac{1}{1+|x|^2}dx$$ which is finite on $\RR$
and
$$(1-|x|^2)(1-|y|^2)dL_{\CC}(x)= \frac{1}{(1+|z|^2)(1+|\bar{z}|^2)}d\ell_{\CC}(z)$$ where $x$ and $y$ are the inverse stereographic projection of 

We now state the large deviation principle on the sphere $\mathcal{S}^2$.

\begin{definition}[Rate function in $\mathcal{M}_1(\mathcal{S}^2)$]
For any measure $\nu \in \mathcal{M}_1(\mathcal{S}^2)$ we define:
\begin{align}
J_{\mathcal{S}^2}(\nu) &= \sup_{x\in T(S)} \int \log |x-y|^2d\nu(y)+\log 2 \\ I_{\mathcal{S}^2}(\nu)& = \mathcal{E}(\nu) + J_{\mathcal{S}^2}(\nu)
\end{align}
\end{definition}

\begin{proposition}[Complex Kac case on the sphere] \label{thmcomplexe}
Let $\bar{\mu}_n$ be the empirical measure of the gas \eqref{gazsphere},
then $(\bar{\mu}_n)_{n\in \NN}$ satisfies a large deviation principle in $\mathcal{M}_1(\mathcal{S}^2)$ with the weak topology, speed $\beta_n$ and good rate function $I_{\mathcal{S}^2}$. This means that for any Borel set $A$ in $\mathcal{M}_1(\mathcal{S}^2)$
\begin{equation*}
-\inf_{\mathrm{Int}A} I_{\mathcal{S}^2} \leq \varliminf_{n\to\infty} \frac{1}{\beta_n} \log \PP(\bar{\mu}_n \in A) \leq \varlimsup_{n\to\infty} \frac{1}{\beta_n} \log \PP(\bar{\mu}_n \in A) \leq -\inf_{\mathrm{Clo}A} I_{\mathcal{S}^2}.
\end{equation*}
\end{proposition}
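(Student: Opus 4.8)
\medskip
\noindent\textit{Sketch of the intended argument.} Since $\mathcal{M}_1(\mathcal{S}^2)$ is compact for the weak topology, by \cite[Lemma 1.2.18]{dembozeitouni} it is enough to prove a \emph{weak} LDP (upper bound for closed sets, lower bound for open sets); lower semicontinuity of the rate function then promotes it to a full, good LDP without any exponential tightness estimate. I would rewrite the density \eqref{gazsphere} as $Z_n^{-1}\exp(-\beta_n\mathcal{H}_n(\bar\mu_n))\,d\kappa_n$, where on a configuration with empirical measure $\mu$,
\[
\mathcal{H}_n(\mu)=\mathcal{E}_{\neq}(\mu)+c_n(\mu),\qquad c_n(\mu):=\tfrac{n+1}{n^2}\log\Big(2^n\!\int\!\prod_{j=1}^n|x-x_j|^2\,dT^*\nu_S(x)\Big),
\]
and $\kappa_n$ is the product measure of Proposition \ref{pushcomplexkac}: it is finite with total mass $\pi^n$ (so negligible at speed $\beta_n=n^2$), and being the $n$-fold product of a measure proportional to the uniform measure of $\mathcal{S}^2$ it has bounded density — a fact used in the lower bound.

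Two analytic inputs are needed. First, the usual regularization of the logarithmic energy: with $\mathcal{E}^{M}(\nu)=\iint\big((-\log|x-y|)\wedge M\big)\,d\nu\,d\nu$, which is bounded and weakly continuous and increases to $\mathcal{E}_{\mathcal{S}^2}$, one has $\mathcal{E}_{\neq}(\bar\mu_n)\ge\mathcal{E}^{M}(\bar\mu_n)-M/n$. Second — the one genuinely new point — a two-sided control of $c_n$. On the unit circle $1-|T(z)|^2=1/2$, so \eqref{relations} gives, for $z\in S$, the identity $2^n\prod_j|T(z)-T(z_j)|^2=|Q(z)|^2\big/\prod_j(1+|z_j|^2)$ with $Q(z)=\prod_j(z-z_j)\in\CC_n[X]$. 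Integrating against $T^*\nu_S$ (the uniform measure on the equator $T(S)$), using that $\{z^k\}$ is $L^2(\nu_S)$-orthonormal and the Bernstein--Markov inequality $\|Q\|_{L^\infty(S)}^2\le(n+1)\|Q\|_{L^2(\nu_S)}^2$ of Definition \ref{bersteinmarkov} — here nothing more than Cauchy--Schwarz on the coefficients of $Q$ — I obtain
\[
\tfrac{1}{n+1}\,\sup_{x\in T(S)}\Big(2^n\!\prod_j|x-x_j|^2\Big)\le 2^n\!\int\!\prod_j|x-x_j|^2\,dT^*\nu_S(x)\le\sup_{x\in T(S)}\Big(2^n\!\prod_j|x-x_j|^2\Big),
\]
the right-hand bound being trivial. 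Since $\tfrac1n\log\sup_{x\in T(S)}(2^n\prod_j|x-x_j|^2)=J_{\mathcal{S}^2}(\bar\mu_n)$ and $J_{\mathcal{S}^2}$ is bounded on $\mathcal{M}_1(\mathcal{S}^2)$ (it lies in $[-C,\log 2]$: above because $|x-y|\le1$, below by bounding its defining supremum by the average over $x\in T(S)$ against the bounded potential of $T^*\nu_S$), this yields $c_n(\bar\mu_n)=J_{\mathcal{S}^2}(\bar\mu_n)+O(\tfrac{\log n}{n})$ uniformly, hence $\mathcal{H}_n(\bar\mu_n)=\mathcal{E}_{\neq}(\bar\mu_n)+J_{\mathcal{S}^2}(\bar\mu_n)+o(1)$. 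Separately I would record the lemma that $I_{\mathcal{S}^2}=\mathcal{E}_{\mathcal{S}^2}+J_{\mathcal{S}^2}$ is lower semicontinuous (hence, by compactness, a good rate function): $\mathcal{E}_{\mathcal{S}^2}$ is lsc and, although $J_{\mathcal{S}^2}$ is merely upper semicontinuous, concentration of mass that decreases $J_{\mathcal{S}^2}$ increases $\mathcal{E}_{\mathcal{S}^2}$ by at least as much — this is established as in \cite{zeitounizelditch}.

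For the upper bound, fix $\mu$ and $\delta>0$ and bound the density over $\{\bar\mu_n\in B(\mu,\delta)\}$ by its supremum: $\PP(\bar\mu_n\in B(\mu,\delta))\le\tfrac{\pi^n}{Z_n}\exp\!\big(-\beta_n\inf_{\overline{B(\mu,\delta)}}(\mathcal{E}^{M}+J_{\mathcal{S}^2})+M\beta_n/n+o(\beta_n)\big)$. Letting $n\to\infty$, then $M\to\infty$ via a minimax step (the $\mathcal{E}^{M}+J_{\mathcal{S}^2}$ are upper semicontinuous, increase in $M$, with supremum $I_{\mathcal{S}^2}$, over a compact set), then $\delta\to0$ using the lower semicontinuity of $I_{\mathcal{S}^2}$, gives $\lim_{\delta\to0}\limsup_n\tfrac1{\beta_n}\log\PP(\bar\mu_n\in B(\mu,\delta))\le c-I_{\mathcal{S}^2}(\mu)$ with $c:=\limsup_n\tfrac1{\beta_n}\log(\pi^n/Z_n)$; a standard covering argument then yields the closed-set bound. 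For the lower bound it suffices to handle balls $B(\mu,\delta)$ with $I_{\mathcal{S}^2}(\mu)<\infty$, and by lsc of $I_{\mathcal{S}^2}$ plus approximation one may assume $\mu$ has bounded continuous density with respect to the uniform measure of $\mathcal{S}^2$. Discretizing $\mu$ into $n$ cells of mass $1/n$ and forcing one point into each (slightly shrunk) cell produces, for large $n$, a set of configurations on which $\bar\mu_n\in B(\mu,\delta)$, $\mathcal{E}_{\neq}(\bar\mu_n)\le\mathcal{E}_{\mathcal{S}^2}(\mu)+o(1)$, and $c_n(\bar\mu_n)\le J_{\mathcal{S}^2}(\bar\mu_n)+o(1)\le J_{\mathcal{S}^2}(\mu)+o(1)$ (by usc of $J_{\mathcal{S}^2}$), while this set has $\kappa_n$-measure at least $e^{-o(\beta_n)}$ because $\kappa_n$ has bounded density; hence $\liminf_n\tfrac1{\beta_n}\log\PP(\bar\mu_n\in B(\mu,\delta))\ge -I_{\mathcal{S}^2}(\mu)-c$. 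Taking $\mu$ to be a minimizer pins $c$ and shows the rate function is $I_{\mathcal{S}^2}-\inf I_{\mathcal{S}^2}$; finally $\inf I_{\mathcal{S}^2}=0$, since $T^*\nu_S$ (the equilibrium measure of the equator) is the minimizer and $I_{\mathcal{S}^2}(T^*\nu_S)=0$ by a direct computation with \eqref{relations} (cf. \cite[Lemma 30]{zeitounizelditch}), giving the stated rate function $I_{\mathcal{S}^2}$.

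The main obstacle is the treatment of the confinement term $c_n$: unlike in a classical Coulomb gas the confinement is not a fixed external potential but the logarithm of an $L^2$-type integral of the polynomials' scalar product, and one must show that at the level of empirical measures it agrees, up to $o(1)$, with the weighted-potential functional $J_{\mathcal{S}^2}$. Its lower bound is exactly a Bernstein--Markov estimate — elementary for the unit circle but the conceptual heart of the whole compactification strategy. The second, softer difficulty is the lower semicontinuity of $I_{\mathcal{S}^2}=\mathcal{E}_{\mathcal{S}^2}+J_{\mathcal{S}^2}$, which does not follow from that of either summand and has to be argued through potential theory.
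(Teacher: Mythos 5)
Your overall strategy tracks the paper's closely: compactify on $\mathcal{S}^2$, reduce to a weak LDP by compactness of $\mathcal{M}_1(\mathcal{S}^2)$, truncate the logarithmic kernel for the upper bound, use the Bernstein--Markov inequality to compare the confinement term with $J_{\mathcal{S}^2}$, discretize for the lower bound, and pin the normalizing constant by evaluating the LDP on the whole space. The identification of the confinement term with $J_{\mathcal{S}^2}$ up to $o(1)$ via Cauchy--Schwarz on the equator is exactly the paper's Lemma~\ref{bernmarko2}.

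There is, however, a genuine gap in the treatment of $J_{\mathcal{S}^2}$. You assert that $J_{\mathcal{S}^2}$ is ``merely upper semicontinuous'' and that the lower semicontinuity of $I_{\mathcal{S}^2}=\mathcal{E}_{\mathcal{S}^2}+J_{\mathcal{S}^2}$ must therefore come from a cancellation between the two summands, which you defer to \cite{zeitounizelditch} without an argument. In fact the paper proves (Proposition~\ref{ratefunction}, part~1) that $J_{\mathcal{S}^2}$ is \emph{continuous}: upper semicontinuity is elementary, but the lower semicontinuity is where potential theory enters --- one uses that the equator $T(S)$ is non-thin at every point (a connected set with more than one point is non-thin), so that the superlevel sets $A_\varepsilon=\{x: -2U^\mu(x)+\log 2\ge J_{\mathcal{S}^2}(\mu)-\varepsilon\}$ intersected with $T(S)$ have positive capacity, which produces a measure $\nu$ with continuous potential supported there and closes the argument by a Fubini exchange. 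This is not optional polish: your $M\to\infty$ step in the upper bound needs it. You want to exchange $\sup_M$ and $\inf_{\overline{B(\sigma,\delta)}}$ for the increasing family $\mathcal{E}^M_{\mathcal{S}^2}+J_{\mathcal{S}^2}$. That exchange over a compact set is valid when the $f_M$ are \emph{lower} semicontinuous (so each $\inf$ is attained and a diagonal/compactness argument applies); with only upper semicontinuity the identity $\sup_M\inf_K f_M=\inf_K\sup_M f_M$ can fail outright --- e.g.\ on $K=[0,1]$ take $f_M(x)=\min(Mx,1)-\mathbf{1}_{x>0}$, which is usc and increasing in $M$, yet $\sup_M\inf_K f_M=-1$ while $\inf_K\sup_M f_M=0$. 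Since $\mathcal{E}^M_{\mathcal{S}^2}$ is continuous, lower semicontinuity of $\mathcal{E}^M_{\mathcal{S}^2}+J_{\mathcal{S}^2}$ forces lower semicontinuity of $J_{\mathcal{S}^2}$; asserting only usc both leaves your minimax unjustified and leaves the lsc of $I_{\mathcal{S}^2}$ (which you invoke for the final $\delta\to 0$) unproved. The fix is to prove $J_{\mathcal{S}^2}$ continuous as the paper does, after which the truncated rate function $\mathcal{E}^M_{\mathcal{S}^2}+J_{\mathcal{S}^2}$ is continuous and the passage $\delta\to 0$ for fixed $M$, then $M\to\infty$ by monotone convergence, is straightforward and needs no minimax at all.
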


\begin{proposition}[Real Kac case on the sphere] \label{thmreel}
Let $\bar{\mu}_n$ be the empirical measure of the gas \eqref{gazsphere2},
then $(\bar{\mu}_n)_{n\in \NN}$ satisfies a large deviation principle in $\mathcal{M}_1(\mathcal{S}^2)$ with the weak topology, speed $\beta_n$ and good rate function $\tilde{I}_{\mathcal{S}^2}$ where:
	
	$$	\tilde{I}_{\mathcal{S}^2}(\mu)=\begin{cases}
	\frac{1}{2}I_{\mathcal{S}^2}(\mu)& \text{if $T^{-1*}\mu$ is invariant under }z \mapsto \bar{z} \\\infty	&   \text{otherwise.} 
	\end{cases} 
	$$
This means that for any Borel set $A$ in $\mathcal{M}_1(\mathcal{S}^2)$
\begin{equation*}
-\inf_{\mathrm{Int}A} \tilde{I}_{\mathcal{S}^2} \leq \varliminf_{n\to\infty} \frac{1}{\beta_n} \log \PP(\bar{\mu}_n \in A) \leq \varlimsup_{n\to\infty} \frac{1}{\beta_n} \log \PP(\bar{\mu}_n \in A) \leq -\inf_{\mathrm{Clo}A} \tilde{I}_{\mathcal{S}^2}.
\end{equation*}
\end{proposition}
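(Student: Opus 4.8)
The plan is to adapt the argument that will be used for the complex case (Proposition \ref{thmcomplexe}) together with the extra symmetry constraint coming from the mixture structure of \eqref{gazsphere2}. Since $\mathcal{M}_1(\mathcal{S}^2)$ is compact, by \cite[Lemma 1.2.18]{dembozeitouni} it suffices to establish a weak large deviation principle, i.e. to prove, for every $\nu \in \mathcal{M}_1(\mathcal{S}^2)$,
\[
-\tilde{I}_{\mathcal{S}^2}(\nu) = \lim_{\delta \to 0} \varliminf_{n\to\infty} \frac{1}{\beta_n}\log \PP(\bar\mu_n \in B(\nu,\delta)) = \lim_{\delta\to 0}\varlimsup_{n\to\infty}\frac{1}{\beta_n}\log \PP(\bar\mu_n \in B(\nu,\delta)),
\]
where $B(\nu,\delta)$ is the ball for the bounded-Lipschitz metric.

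The first step is to reduce the mixture \eqref{gazsphere2} to a single term. Fix $k$ and observe that the measure $\rho_{n,k}$ is supported on configurations with $n-2k$ points on the equator $T(\RR\cup\{\infty\})$ and $k$ conjugate pairs; any empirical measure $\bar\mu_n$ built from such a configuration is invariant under the reflection $z\mapsto \bar z$ transported to the sphere, i.e. $T^{-1*}\bar\mu_n$ is invariant under conjugation (up to the error of $2k/n$ coming from the real points, which is harmless at our scale). Using Proposition \ref{limits} — the uniform control $\frac{1}{\beta_n}\log Z_{n,k}\to 0$ uniformly in $k$ — the $\lfloor n/2\rfloor +1$ terms contribute, after taking $\frac{1}{\beta_n}\log$, the same exponential rate as the single best term, and the polynomial prefactor $(\lfloor n/2 \rfloor+1)$ disappears at speed $\beta_n\gg n$. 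Hence it is enough to estimate, uniformly in $k$,
\[
\frac{1}{\beta_n}\log \int_{B(\nu,\delta)} \exp\!\left(-\beta_n\Big[\tfrac12\mathcal{E}_{\neq}(\bar\mu_n) - \tfrac{n+1}{2n^2}\log\!\int \prod_i|x-x_i|^2 2^n\, dT^*\nu_S(x)\Big]\right) d\rho_{n,k}.
\]

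The second step is the energy estimate, which is where the complex-case analysis is reused essentially verbatim but with the factor $\tfrac12$. The functional $\tfrac12 I_{\mathcal{S}^2}(\nu)=\tfrac12\mathcal{E}(\nu)+\tfrac12 J_{\mathcal{S}^2}(\nu)$ appears because the Hamiltonian carries the factor $\tfrac12$; the lower-semicontinuity of $\mathcal{E}_{\mathcal{S}^2}$ on the compact sphere, a regularization argument for the upper bound (smoothing $\bar\mu_n$ to control the diagonal), and the Bernstein–Markov property (transported to the sphere via relations \eqref{relations}–\eqref{relation2} as in the complex case) to handle the $\log\int\prod|x-x_i|^2$ term all go through unchanged. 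The only genuinely new point is the support/symmetry bookkeeping: in the lower bound one must exhibit, for $\nu$ with $T^{-1*}\nu$ conjugation-invariant, an approximating sequence of configurations lying in $\mathrm{supp}\,\rho_{n,k}$ for a suitable $k=k(n)$ — i.e. one places roughly $n\,\nu(\text{equator})$ points on the equator and the rest in conjugate pairs — so that the local rate matches $\tfrac12 I_{\mathcal{S}^2}(\nu)$; and if $T^{-1*}\nu$ is \emph{not} conjugation-invariant, then $B(\nu,\delta)$ eventually contains no point of $\bigcup_k \mathrm{supp}\,\rho_{n,k}$ for $\delta$ small, giving probability $0$ and rate $+\infty$, which is exactly the definition of $\tilde I_{\mathcal{S}^2}$.

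The main obstacle is this last symmetry argument in the lower bound: one needs the approximating empirical measures to simultaneously (i) converge to $\nu$ weakly, (ii) respect the exact discrete conjugation symmetry required by $d\rho_{n,k}$, and (iii) keep the discrete energy $\mathcal{E}_{\neq}$ close to $\mathcal{E}_{\mathcal{S}^2}(\nu)$ while the confinement term stays controlled — the pairing constraint slightly restricts the configurations available, so one must check that the standard construction (discretizing $\nu$ on a fine mesh, then symmetrizing) does not inflate the energy. Goodness of $\tilde I_{\mathcal{S}^2}$ follows from goodness of $I_{\mathcal{S}^2}$ (established in the complex case) together with the fact that the set of conjugation-invariant measures is closed, so $\tilde I_{\mathcal{S}^2}$ has closed sublevel sets on the compact space $\mathcal{M}_1(\mathcal{S}^2)$.
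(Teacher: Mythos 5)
Your proposal follows essentially the same route as the paper: reduce to a weak LDP on the compact sphere, collapse the mixture over $k$ using the uniform control of $Z_{n,k}$ from Proposition \ref{limits} (the $O(n)$ number of terms is killed at speed $\beta_n$), run the same energy/Bernstein--Markov estimates as in the complex case with the extra factor $\tfrac12$, and use the conjugation symmetry of the configurations to explain why the rate is $+\infty$ off the invariant class. Two small points of difference. First, for the lower bound you suggest placing roughly $n\,\nu(\text{equator})$ points on the equator and the rest in conjugate pairs; the paper instead reduces $\sigma$, via lower semicontinuity of the energy, to measures absolutely continuous with respect to $\ell_{\CC}$ with density bounded above and below on a rectangle (so automatically $\sigma(\text{equator})=0$), then works only with the single term $k=\lfloor n/2\rfloor$ and a conjugation-symmetric rectangle decomposition (Figures 3--4). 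That reduction sidesteps the delicate discrete-energy accounting on a one-dimensional curve that you flag as the ``main obstacle.'' Second, your parenthetical that $T^{-1*}\bar{\mu}_n$ is conjugation-invariant only ``up to an error of $2k/n$'' is not needed: configurations in the support of $\rho_{n,k}$ consist exactly of $n-2k$ equatorial points and $k$ conjugate pairs (and $T$ intertwines $z\mapsto\bar z$ with the reflection across the equatorial plane), so $T^{-1*}\bar\mu_n$ is \emph{exactly} conjugation-invariant. This exactness is what makes $\PP(\bar\mu_n \in B(\sigma,\delta))=0$ for small $\delta$ when $\sigma$ is not invariant, yielding the $+\infty$ value of $\tilde{I}_{\mathcal{S}^2}$ without any approximation argument.
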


\subsection{Step 3: Proof of the Large Deviations Principles}
We now prove the Proposition \ref{thmcomplexe} and Proposition \ref{thmreel}. We start by studying the rate function, then we prove the lower and upper bound for the gas without the normalizing constants $Z_n$ and $Z_{n,k}$. Finally, we obtain the full large deviation principle.

\subsubsection{Study of the rate function on the sphere}
Next proposition is the key of all the rest of the work and comes from \cite[Lemma 26]{zeitounizelditch}.
\begin{proposition}[Rate function $I_{\mathcal{S}^2}$] \label{ratefunction}
	\item
	1)$J_{\mathcal{S}^2}$ is continuous for the weak topology of $\mathcal{M}_1(\mathcal{S}^2)$ and is bounded.
	\item
	2)$I_\mathcal{S}^2$ is well defined on $\mathcal{M}_1(\mathcal{S}^2)$, takes its values in $[0,\infty]$.
	\item
	3) $I_{\mathcal{S}^2}$ is lower semi-continuous.
	\item
	4) $I_{\mathcal{S}^2}$ is strictly convex.
\end{proposition}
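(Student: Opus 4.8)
The plan is to establish the four claims in order, since each feeds into the next. For claim 1), I would first rewrite $J_{\mathcal{S}^2}(\nu) = \log 2 + \sup_{x \in T(S)} \int \log|x-y|^2 \, d\nu(y)$ and observe that $T(S)$, the image of the unit circle under inverse stereographic projection, is the equator of $\mathcal{S}^2$, hence a compact set at positive distance from the north and south poles. The key point is that for $x$ ranging over a fixed compact subset $T(S)$ of $\mathcal{S}^2$ and $y$ ranging over all of $\mathcal{S}^2$, the quantity $|x-y|^2$ stays bounded above (by the diameter of the sphere), so $\log|x-y|^2 \leq \log 1 = 0$; this gives the upper bound on $J_{\mathcal{S}^2}$. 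For the lower bound and continuity, the function $(x,y) \mapsto \log|x-y|^2$ is \emph{not} continuous on $T(S) \times \mathcal{S}^2$ — it blows down to $-\infty$ on the diagonal — but the diagonal of $T(S)$ sits in the \emph{interior} of $\mathcal{S}^2$'s geometry in the sense that this singularity is exactly the one controlled by the off-diagonal integrability of the logarithm. The cleanest route is: for fixed $\nu$, the map $x \mapsto \int \log|x-y|^2 d\nu(y) = -2 U^{\nu}_{\mathcal{S}^2}(x)$ is upper semi-continuous in $x$ (Fatou), so the supremum over the compact set $T(S)$ is attained; and it is actually continuous in $x$ on $T(S)$ because one can truncate $\log$ at level $-M$ and use that $T(S)$ has positive capacity so the potential is continuous there, but since we only need boundedness and continuity \emph{in $\nu$} for the weak topology, I would instead argue as in \cite[Lemma 26]{zeitounizelditch}: write $\log|x-y|^2$ as an increasing limit of bounded continuous functions $\log(|x-y|^2 \vee \tfrac1m)$, note $\nu \mapsto \int \log(|x-y|^2 \vee \tfrac1m) d\nu(y)$ is continuous on $\mathcal{M}_1(\mathcal{S}^2) \times T(S)$ hence $\nu \mapsto \sup_x \int(\cdots) d\nu$ is continuous, and then check the convergence as $m\to\infty$ is uniform in $\nu$ using the non-thinness hypothesis on $T(S)$ (here just the equator, which is fine since a circle is non-polar). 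Boundedness follows since $J_{\mathcal{S}^2}$ is a uniform limit of bounded functions, or directly from the two-sided bound.

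For claim 2), I would combine claim 1) with the fact that $\mathcal{E}_{\mathcal{S}^2}(\nu) = -\iint \log|x-y| d\nu(x) d\nu(y)$ is well-defined in $(-\infty, +\infty]$ — again because $\log|x-y| \leq 0$ on the bounded sphere, so the integral is bounded below and the only issue is $+\infty$, which is allowed. So $I_{\mathcal{S}^2} = \mathcal{E}_{\mathcal{S}^2} + J_{\mathcal{S}^2}$ is well-defined with values in $(-\infty,+\infty]$. To pin down the lower bound $0$ I would use the relation \eqref{relations}: for any $\nu \in \mathcal{M}_1(\mathcal{S}^2)$ and the corresponding $\mu = T^{-1*}\nu$ on $\CC$, the reader's identities convert $I_{\mathcal{S}^2}(\nu)$ into the expression $I(\mu)$ from Theorem \ref{LDPKac}, and then I would invoke that $I \geq 0$ — but since that itself requires proof, the self-contained argument is: expand, using $\log|x-y| = \tfrac12\log|x-y|^2$ and the fact that evaluating the double integral at the point $x_0 \in T(S)$ achieving the sup in $J_{\mathcal{S}^2}$ gives $\mathcal{E}_{\mathcal{S}^2}(\nu) + J_{\mathcal{S}^2}(\nu) \geq \mathcal{E}_{\mathcal{S}^2}(\nu) + \int \log|x_0 - y|^2 d\nu(y) + \log 2 \geq$ something that a Jensen / Frostman-type argument shows is $\geq 0$. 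Concretely: $I_{\mathcal{S}^2}(\nu) \geq \mathcal{E}_{\mathcal{S}^2}(\nu) - 2\int U^{\nu}_{\mathcal{S}^2}(x_0)\cdot(-1) \dots$; I expect the clean statement is that $I_{\mathcal{S}^2}(\nu) = \mathcal{E}_{\mathcal{S}^2}(\nu) - \inf_{\text{some class}} \dots$ and nonnegativity reduces to positivity of a mutual energy, as in \cite[Lemma 26]{zeitounizelditch}. I would follow that lemma.

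For claim 3), lower semi-continuity of $I_{\mathcal{S}^2}$ follows immediately: $J_{\mathcal{S}^2}$ is continuous by 1), and $\mathcal{E}_{\mathcal{S}^2}$ is lower semi-continuous on $\mathcal{M}_1(\mathcal{S}^2)$ for the weak topology — the standard fact, proved by writing $\mathcal{E}_{\mathcal{S}^2}(\nu) = \sup_m \iint -\log(|x-y| \vee \tfrac1m)\,d\nu(x)d\nu(y)$ as a supremum of weakly continuous functionals of $\nu$, hence lsc. Sum of an lsc and a continuous function is lsc. For claim 4), strict convexity, I would use the standard Fourier / Hilbert-space argument for the logarithmic energy: $\mathcal{E}_{\mathcal{S}^2}$ restricted to measures of finite energy is a strictly convex quadratic form because $-\log|x-y|$ is (up to harmonic-analysis normalization) a positive-definite kernel — equivalently, via the stereographic identity \eqref{relations} transporting to $\CC$, $\mathcal{E}(\mu) - \iint \tfrac12\log(1+|z|^2) + \tfrac12\log(1+|w|^2)$ is a strictly convex function of $\mu$ because the logarithmic kernel on $\CC$ is strictly positive definite on signed measures of total mass zero (the difference of two probability measures has mass zero). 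Meanwhile $J_{\mathcal{S}^2}$ is convex as a supremum of the linear (hence convex) maps $\nu \mapsto \int \log|x-y|^2 d\nu(y) + \log 2$. Sum of a strictly convex function and a convex function is strictly convex; on the set where $I_{\mathcal{S}^2} = +\infty$ there is nothing to check, and the finite-energy set is convex.

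The main obstacle is claim 1): getting genuine \emph{continuity} (not just semi-continuity) of $J_{\mathcal{S}^2}$ on $\mathcal{M}_1(\mathcal{S}^2)$, because the kernel $\log|x-y|^2$ is unbounded below. This is exactly where one must use the non-thinness of $T(S)$ (the equator) at each of its points — more precisely, that the equilibrium potential of the circle is continuous, so the truncation error $\sup_x \int [\log|x-y|^2 - \log(|x-y|^2 \vee \tfrac1m)]\,d\nu(y)$ goes to $0$ uniformly over $\nu \in \mathcal{M}_1(\mathcal{S}^2)$; once that uniform convergence is in hand, $J_{\mathcal{S}^2}$ is a uniform limit of continuous bounded functions and everything else is routine. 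I would lean on \cite[Lemma 26]{zeitounizelditch} for this technical core and spell out only the transcription to the present notation.
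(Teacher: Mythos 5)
Your overall structure matches the paper's: truncate the kernel, establish continuity of $J_{\mathcal{S}^2}$, use lower semi-continuity of the logarithmic energy, and deduce convexity by writing $J_{\mathcal{S}^2}$ as a supremum of linear functionals and transporting $\mathcal{E}_{\mathcal{S}^2}$ to the plane via the stereographic identity \eqref{relations}. Claims 3) and 4) coincide with the paper's argument essentially verbatim. For claim 2) you are actually more scrupulous than the paper, which only establishes in this proof that $I_{\mathcal{S}^2}$ is well-defined with values in $(-\infty,\infty]$; the lower bound $0$ is not argued inside the proposition but is obtained later from the normalizing-constant identity \eqref{trick}.

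The gap is in claim 1), where you propose to prove continuity of $J_{\mathcal{S}^2}$ by showing that $J^M_{\mathcal{S}^2} \to J_{\mathcal{S}^2}$ uniformly over $\nu \in \mathcal{M}_1(\mathcal{S}^2)$. This reverses the logic: since each $J^M_{\mathcal{S}^2}$ is continuous and decreases to $J_{\mathcal{S}^2}$ on the compact space $\mathcal{M}_1(\mathcal{S}^2)$, Dini's theorem says uniform convergence is \emph{equivalent} to continuity of the limit, so "uniform convergence" is the conclusion, not an available tool. Moreover, the naive bound you would reach for, namely
$\sup_{x\in T(S)}\int \bigl[\log_M|x-y|^2 - \log|x-y|^2\bigr]d\nu(y)$,
does not tend to $0$ uniformly: take $\nu = \delta_{x_0}$ with $x_0\in T(S)$, for which this supremum is $+\infty$ for every $M$, even though $J^M_{\mathcal{S}^2}(\delta_{x_0}) = J_{\mathcal{S}^2}(\delta_{x_0})$. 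The paper instead splits continuity into upper and lower semi-continuity. Upper semi-continuity follows from a Dini-type exchange of $\lim_M$ and $\sup_x$ over the compact $T(S)$, with no potential theory needed. Lower semi-continuity is the genuinely hard step and is where non-thinness enters concretely: for the closed set $A_\varepsilon = \{x : -2U^{\mu}(x)+\log 2 \geq J_{\mathcal{S}^2}(\mu)-\varepsilon\}$, one must produce a finite measure $\nu$ of positive mass supported on $A_\varepsilon\cap T(S)$ whose potential $U^\nu$ is \emph{continuous}; this exists because $A_\varepsilon\cap T(S)$ has positive capacity (otherwise $T(S)$ would be thin at the maximizer $x_0$, contradicting non-thinness of the circle). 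One then transfers $\liminf_n \int -U^{\mu_n}\,d\nu$ to $\int -U^{\mu}\,d\nu$ by Fubini and symmetry of the kernel. You identify the right ingredients (truncation, compactness of $T(S)$, non-thinness) but the proposed mechanism is not the one that closes the argument; you would need to prove the two semi-continuities separately as the paper does.
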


\begin{proof}[Proof of Proposition \ref{ratefunction}]
	First, we notice that since $\mathcal{S}^2$ is a compact set in $\RR$, the function $(x,y)\mapsto \log |x-y|$ is bounded above on $\mathcal{S}^2 \times \mathcal{S}^2$. Hence, the function $\mathcal{E}_{\mathcal{S}^2}$ is bounded from below and $J_{\mathcal{S}^2}$ is bounded from above. We cannot conclude yet that the function $I_{\mathcal{S}^2}$ is well defined. We will see that since $J_{\mathcal{S}^2}$ is continuous on the compact $\mathcal{M}_1(\mathcal{S}^2)$, it is bounded and $I_{\mathcal{S}^2}$ is well defined and takes its values in $(-\infty, \infty ]$.
	
	Fix $M \in \RR$ and define $\log_M(x)= \log(x)\vee (-M)$.  The function $(x,y)\mapsto \log_M|x-y|$ is continuous on $\mathcal{S}^2 \times \mathcal{S}^2$. We define:
	
	\begin{align*}
	U^{\nu}_M(x)&= -\int \log_M|x-y|d\nu(y) \\
	\mathcal{E}_{\mathcal{S}^2}^M(\nu)&= - \iint \log_M |x-y|d\nu(x)d\nu(y) \\
	J_{\mathcal{S}^2}^M(\nu) &= \sup_{x\in T(S)} \int \log_M |x-y|^2d\nu(y)+\log 2\\ 
	I_{\mathcal{S}^2}^M(\nu) &= \  \mathcal{E}_{\mathcal{S}^2}^M(\nu) + J_{\mathcal{S}^2}(\nu).
	\end{align*}
We will now prove the upper semi-continuity and lower semi-continuity of the function $J_{\mathcal{S}^2}$.

	\begin{itemize}
		\item \textbf{Upper semi-continuity}
	\end{itemize}
	
	The map $W:(x,\mu) \mapsto U^{\mu}_M(x)$ is continuous because the function $(x,y)\mapsto \log_M|x-y|$ is uniformly continuous. As $T(S)$ is compact, $J_{\mathcal{S}^2}^M$ is also continuous. Indeed, if $\mu_n \rightarrow \mu$ weakly, then there exist a sequence $(x_n)_{n \in \NN}$ such that for any $n$,  $W(x_n,\mu_n)=J_{\mathcal{S}^2}^M(\mu_n)$. Let $c$ be an accumulation point of the sequence $(J_{\mathcal{S}^2}^M(\mu_n))_{n\in \NN}$. One can extract a convergving subsequence of $(x_n)_{n \in \NN}$ and call $x_{\infty}$ its limit. Taking the limit of the inequality 
	\begin{equation*}
	W(x_n,\mu_n) \geq W(x ,\mu_n)
	\end{equation*} 
	for any fixed $x$ shows that $c = J_{\mathcal{S}^2}^M(\mu)$. Hence, $J_{\mathcal{S}^2}^M$ is continuous.
	Now let $\mu_n \rightarrow \mu$ weakly in $\mathcal{M}_1(\mathcal{S}^2)$, we have:
	
	$$J_{\mathcal{S}^2}(\mu_n) \leq J_{\mathcal{S}^2}^M(\mu_n) \xrightarrow[n \rightarrow \infty]{} J_{\mathcal{S}^2}^M (\mu)$$
If we take the limit superior of the last inequality we obtain: 
	$$\varlimsup_{n\to\infty} J_{\mathcal{S}^2}(\mu_n) \leq J_{\mathcal{S}^2}^M (\mu)$$
	To conclude, we want to let $M$ go to infinity, but we have to justify the exchange between the limit and the supremum. In order to do that, we use a short lemma given below:
	
	\begin{lemma}
		Let $(f_M)_{M\in \RR^+}$ be a decreasing sequence of continuous functions defined on a compact set $K$, converging point-wise towards a function $f$. Then we have:
		$$ \lim_{M\to\infty} \sup_{z\in K}f_M(z)=\sup_{z\in K} f(z).$$
	\end{lemma}
	\begin{proof}
It is easy to show that the function $M \mapsto \sup_{z\in K}f_M(z)$ is decreasing and is bounded below by $\sup_{z\in K} f(z)$. Hence we obtain $\lim\limits_{M \rightarrow \infty} \sup_{z\in K}f_M(z) \geq \sup_{z\in K} f(z)$. To prove the other inequality, fix $\varepsilon >0$, then, as $f_M(x)$ decreases towards $f(x)$, we have:
\[ \forall x \in K, \exists M_x >0 \text{ such that } \forall M\geq M_x, f(x) \geq f_M(x) \geq f(x)+ \varepsilon.\]
As $f_{M_x}$ is continuous at $x$, there is an $\delta_x$ such that for all $y \in B(x,\delta_x)$ : $f(x)-\varepsilon \leq f_{M_x}(y) \leq f(x) + 2 \varepsilon$. As the sequence $f_M$ is decreasing, this relation is also satisfied for all $M \geq M_x$. As $K$ is compact, we can extract a finite family $\{x_1 ,\dots ,x_p\}$ such that $K \subset \cup B(x_i, \delta_i)$. We set $M_{\infty}= \max_{i=1 \dots p} M_{x_i}$ so we have:
$$ \forall M \geq M_{\infty}, \forall y \in K, \exists i \in \{1, \dots, p\} \mid f(x_i)- \varepsilon \leq f_M(y) \leq f(x_i)+ 2 \varepsilon.$$ This last statement implies that $\sup_K f_M(z) \leq \sup_K f(z) + 2 \varepsilon$ and ends the proof.  
\end{proof}
We apply the lemma and we end the proof of the upper semi-continuity:	
\[	\varlimsup_{n\to\infty} J_{\mathcal{S}^2}(\mu_n) \leq J_{\mathcal{S}^2}(\mu). \]
\begin{itemize}
		\item \textbf{Lower semi-continuity}
\end{itemize}
This is where the notion of non-thinness is involved. We will use the fact that the support of $T^* \nu_S$ is non thin at all its points.
Suppose that $\mu_n \xrightarrow[n\rightarrow \infty]{}\mu$. We want to show that $\varliminf_{n\to\infty} J_{\mathcal{S}^2}(\mu_n) \geq J_{\mathcal{S}^2}(\mu)$. We know that $J_{\mathcal{S}^2}(\mu) < \infty$. If $J_{\mathcal{S}^2}(\mu) = - \infty$ then there is nothing to prove. For any $\varepsilon > 0$ we introduce the set:
\[	A_{\varepsilon}= \{ x \in \mathcal{S}^2 \mid -2 U^{\mu}(x) +\log 2 \geq J_{\mathcal{S}^2}(\mu) - \varepsilon \}.\]
For any $\varepsilon$, $A_{\varepsilon}$ is closed by upper semi-continuity of $-U^{\mu}$. Let $x_0$ be a point where $-U^{\mu}$ reaches its maximum on the equator $T(S)$. We want to find a measure of positive and finite mass $\nu$ supported on $A_{\varepsilon}\cap T(S)$ such that $U^{\nu}$ is a continuous function, for any $\varepsilon$. We can find such a measure as soon as the capacity of $A_{\varepsilon}\cap T(S)$ is positive \cite[Chapter 1, Corollary 6.11]{safftotik}. If for some $\varepsilon_0 > 0$ the set $A_{\varepsilon}\cap T(S)$ had zero capacity, it would be thin at any point \cite[Theorem 3.8.2 p79]{ransford1995potential}. By the definition of the set $A_{\varepsilon_0}$, the complement of $A_{\varepsilon}\cap T(S)$ in $T(S)$ is thin at $x_0$. Then, as the union of two thin sets at $x_0$ is thin at $x_0$, $T(S)$ is thin at $x_0$. This is absurd as the equator is non-thin at all its points (as connected set, \cite[Theorem 3.8.3]{ransford1995potential}).
	
Now that we obtained the existence of $\nu$, we can end the proof. Thanks to Fubini's theorem, we have:
\begin{align*}
	\lim\limits_{n\rightarrow \infty} \int -U^{\mu_n}(x) d\nu(x)  & = \lim\limits_{n\rightarrow \infty} \int- U^{\nu}(x) d\mu_n(x)\\
	& = \int- U^{\nu}(x) d\mu(x) \\
	& = \int -U^{\mu}(x) d\nu(x).
\end{align*} 
Since the support of $\nu$ is included in $A_{\varepsilon}$, we have:
	$$ \int \varliminf_{n\to\infty} J_{\mathcal{S}^2}(\mu_n) d\nu(x) \geq \lim_{n\to\infty} \int -2U^{\mu_n}(x)d\nu(x) +\log 2 \geq \int \left( J_{\mathcal{S}^2}(\mu) - \varepsilon \right)d\nu(x). $$
	We end the proof by noticing that $\nu$ has positive mass and that $\varepsilon$ is arbitrary.
	
	\begin{itemize}
		\item \textbf{Conclusion}
	\end{itemize}
	
As $J_{\mathcal{S}^2}$ is continuous on the compact $\mathcal{M}_1(\mathcal{S}^2)$, it is bounded and the function $I_{\mathcal{S}^2}$ is well defined.
	
	For each fixed $x$, the functions $\mu \mapsto U^{\mu}_M(x)$ and $\mu \mapsto \mathcal{E}_{\mathcal{S}^2}^M(\mu)$ are continuous on $\mathcal{M}_1(\mathcal{S}^2)$. Since $\mathcal{E}_{\mathcal{S}^2}= \inf_{M}\mathcal{E}_{\mathcal{S}^2}^M$, $\mathcal{E}_{\mathcal{S}^2}$ is a lower semi-continuous function. $I_{\mathcal{S}^2}$ is lower semi-continuous as the sum of a continuous and lower semi-continuous functions.
	
	It is well known that the classical interaction energy $\mathcal{E}$ is a convex function \cite{deift}, \cite[Proposition $5.3.2$]{hiaipetz}. For an measure with finite energy, we can write: $$\mathcal{E}_{\mathcal{S}^2}(\mu)= \mathcal{E}((T^{-1})^*\mu) + \int \log(1+|z|^2)d(T^{-1})^*\mu.$$ Since the function $\nu \mapsto \displaystyle \int \log|1+|z|^2|d\nu$ is linear we have the convexity of $\mathcal{E}_{\mathcal{S}^2}$. On the other hand, $J_{\mathcal{S}^2}$ is the supremum of linear functions so is convex.
	
\end{proof}
\subsubsection{Large deviations upper bound}

We will prove the upper bound for the non-normalized measures, the normalizing constant will be treated once we have both upper and lower bound.
\begin{itemize}
	\item \textbf{Bernstein-Markov inequality}
\end{itemize}

We need to prove a Bernstein-Markov inequality for the measures $\nu_S$ and $T^*\nu_S$.

\begin{theorem}[Bernstein-Markov for $\nu_S$]
	Let $N\in \NN$, then for all $P \in \CC_N[X]$ we have:
$$ \sup_{z \in S^1}|P(z)| \leq \sqrt{N} \|P\|_{L^2}$$ where $\|P\|_{L^2}^2 = \int |P(z)|^2d\nu_S(z)$. In particular, for all $\varepsilon > 0$,  there is a constant $C_{\varepsilon}$ such that for all $P \in \CC_N[X]$ we have:
$$ \sup_{z \in S^1}|P(z)| \leq C_{\varepsilon} e^{\varepsilon N}  \|P\|_{L^2}.$$ 
\end{theorem}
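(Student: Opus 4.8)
The statement to prove is the Bernstein-Markov inequality for $\nu_S$, the uniform measure on the unit circle: for every $P \in \CC_N[X]$, $\sup_{z \in S^1} |P(z)| \leq \sqrt{N}\, \|P\|_{L^2}$, where $\|P\|_{L^2}^2 = \int |P(z)|^2 d\nu_S(z)$; and consequently, for every $\varepsilon > 0$ there is $C_\varepsilon$ with $\sup_{z \in S^1} |P(z)| \leq C_\varepsilon e^{\varepsilon N} \|P\|_{L^2}$. The natural approach is to expand $P$ in the orthonormal basis of monomials: write $P(z) = \sum_{k=0}^N a_k z^k$, so that $\|P\|_{L^2}^2 = \sum_{k=0}^N |a_k|^2$ by orthonormality of $(z^k)$ for the scalar product \eqref{psKac}. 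Then for any $z \in S^1$, Cauchy-Schwarz gives $|P(z)| = \left|\sum_{k=0}^N a_k z^k\right| \leq \left(\sum_{k=0}^N |a_k|^2\right)^{1/2} \left(\sum_{k=0}^N |z^k|^2\right)^{1/2} = \|P\|_{L^2} \cdot \sqrt{N+1}$.

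Strictly speaking this yields the constant $\sqrt{N+1}$ rather than $\sqrt{N}$; if one wants exactly $\sqrt{N}$ one should note the harmless nature of such constants, or observe that for the application only the growth rate matters. In any case, the first inequality is the whole content — the second is immediate: given $\varepsilon > 0$, since $\sqrt{N+1} \leq C_\varepsilon e^{\varepsilon N}$ for a suitable constant $C_\varepsilon$ (indeed $\sqrt{N+1} = o(e^{\varepsilon N})$, so the supremum of $\sqrt{N+1}\, e^{-\varepsilon N}$ over $N \in \NN$ is a finite constant $C_\varepsilon$), we get $\sup_{z\in S^1}|P(z)| \leq C_\varepsilon e^{\varepsilon N} \|P\|_{L^2}$. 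Squaring, this is exactly the Bernstein-Markov property of Definition \ref{bersteinmarkov} with $\phi = 0$ and $\nu = \nu_S$.

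There is essentially no obstacle here: the only subtlety is bookkeeping the constant ($\sqrt N$ vs $\sqrt{N+1}$), which is irrelevant for the large-deviation argument since the property is invoked only through the $e^{\varepsilon n}$ factor. The one structural point worth stating explicitly in the proof is the use of orthonormality of the monomial basis for $\langle \cdot, \cdot \rangle$ given in \eqref{psKac}, since that is what turns $\|P\|_{L^2}^2$ into $\sum |a_k|^2$ and makes the Cauchy-Schwarz bound tight up to the factor $\sqrt{N+1}$. After this, the companion statement for $T^*\nu_S$ (the pushed-forward measure on the equator) should follow by transporting the inequality through $T$, using relations \eqref{relations}--\eqref{relation2}; but that is a separate statement not included in the excerpt.
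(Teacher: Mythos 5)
Your proof is correct and is essentially the same argument as the paper's: the paper writes $P_n(z) = \int P_n(w)\sum_{k=0}^n z^k \bar w^k\,d\nu_S(w)$ and applies Cauchy--Schwarz to the integral, which, once you use orthonormality of the monomials, is exactly your expansion-plus-Cauchy--Schwarz computation with the reproducing kernel. Your observation about the constant is also accurate: the bound is really $\sqrt{N+1}$, and the paper's final $\sqrt{n}$ is a small slip, immaterial for the $e^{\varepsilon N}$ consequence that is what gets used.
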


\begin{proof}
We start from the following identity:
\[P_n(z)= \int P_n(w) \sum_{k=0}^n z^k \bar{w}^k d\nu_S(w).\]
Then by the Schwarz inequality we obtain:
\[\sup_{z \in S} |P_n(z)| \leq \sup_{z \in S} \sqrt{\sum_{k=0}^n |z|^{2k} } \|P_n\|_{L^2(\nu_S)} \leq \sqrt{n}\|P_n\|_{L^2(\nu_S)}.\]
\end{proof}

\begin{lemma} \label{bernmarko}
	For all $\varepsilon > 0$, there exists an integer $N_0$ such that for all $n>N_0$ we have:
\begin{equation*}
 \frac{1}{n} \log  \int e^{2n\int_{\CC}\log|z-w|d\mu_n(w)}d\nu_S(z) \geq 2\sup_{z \in S} \int_{\CC} \log|z-w|d\mu_n(w) - \varepsilon.
\end{equation*}
\end{lemma}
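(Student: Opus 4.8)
The goal is to bound from below the "log-Laplace" quantity $\frac{1}{n}\log\int_S e^{2n\int\log|z-w|d\mu_n(w)}d\nu_S(z)$ by its "sup" counterpart $2\sup_{z\in S}\int\log|z-w|d\mu_n(w)$, up to $\varepsilon$. Write $Q_n(z)=\prod_{i=1}^n(z-z_i)$, so that $e^{2n\int\log|z-w|d\mu_n(w)}=|Q_n(z)|^2$ and the left-hand integral is exactly $\|Q_n\|_{L^2(\nu_S)}^2$, while the right-hand side is $2\log\sup_{z\in S}|Q_n(z)|$. Thus the inequality to prove is precisely
\[
\frac{2}{n}\log\|Q_n\|_{L^2(\nu_S)} \ge \frac{2}{n}\log\sup_{z\in S}|Q_n(z)| - \varepsilon ,
\]
i.e.\ $\sup_{z\in S}|Q_n(z)| \le e^{\varepsilon n/2}\|Q_n\|_{L^2(\nu_S)}$ for $n$ large. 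But $Q_n\in\CC_n[X]$, so this is an immediate consequence of the Bernstein–Markov inequality for $\nu_S$ just proved: $\sup_{z\in S}|P(z)|\le\sqrt{n}\,\|P\|_{L^2(\nu_S)}$ for $P\in\CC_n[X]$. Indeed $\sqrt n \le e^{\varepsilon n/2}$ for all $n$ larger than some $N_0=N_0(\varepsilon)$.

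So the proof is essentially three lines: (i) identify $e^{2n\int\log|z-w|d\mu_n(w)}$ with $|Q_n(z)|^2$ where $Q_n=\prod(z-z_i)$, hence the left integral with $\|Q_n\|_{L^2(\nu_S)}^2$ and the right side with $2\log\sup_{z\in S}|Q_n(z)|$; (ii) apply the Bernstein–Markov theorem for $\nu_S$ to $P=Q_n\in\CC_n[X]$, giving $\sup_{z\in S}|Q_n(z)|\le\sqrt n\,\|Q_n\|_{L^2(\nu_S)}$; (iii) take $\frac1n\log$ of the squared inequality and absorb $\frac1n\log n\to 0$ into $\varepsilon$ by choosing $N_0$ with $\frac1n\log n\le\varepsilon$ for $n>N_0$. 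One small point to state carefully: the $z_i$ here are the atoms of $\mu_n$, which could a priori include points outside the unit disk or repeated points, but nothing in the argument uses their location — the Bernstein–Markov bound holds for every polynomial of degree $\le n$, so it applies verbatim.

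There is no real obstacle; the only thing worth a sentence is that we must handle the degenerate case where some $z_i$ lies on $S$, so that $\sup_{z\in S}|Q_n(z)|$ and $\|Q_n\|_{L^2(\nu_S)}$ could both be finite and the logarithms well-behaved — but $Q_n$ is a genuine polynomial, $\|Q_n\|_{L^2(\nu_S)}>0$ unless $Q_n\equiv0$ (impossible, it is monic), so all quantities are finite and the inequality is meaningful. I would present it exactly in that order: rewrite both sides in terms of $Q_n$, invoke the previously established Bernstein–Markov inequality, then pass to logarithms and choose $N_0$ so that $\tfrac1n\log n<\varepsilon$.
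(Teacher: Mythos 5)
Your proof is correct and follows essentially the same route as the paper's: both identify $e^{2n\int\log|z-w|d\mu_n(w)}$ with $|Q_n(z)|^2=\prod_i|z-z_i|^2$, invoke the Bernstein--Markov inequality for $\nu_S$ applied to $Q_n\in\CC_n[X]$, take $\tfrac1n\log$, and choose $N_0$ to absorb the subexponential correction into $\varepsilon$. The only cosmetic difference is that you use the explicit $\sqrt{n}$ bound from the Bernstein--Markov theorem while the paper uses the equivalent $C_{\varepsilon/2}e^{n\varepsilon/2}$ form; both are stated in the theorem and lead to the same conclusion.
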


\begin{proof}[Proof of Lemma \ref{bernmarko}]
	\[\int e^{2n\int_{\CC}\log|z-w|d\mu_n(w)}d\nu_S(z)= \int \prod_{i=1}^{n} |z-z_i|^2 d\nu_S(z).\]
Thanks to the Bernstein-Markov inequality, we have:
\[\left( \int \prod_{i=1}^n |z-z_i|^2 d\nu_S(z)\right) ^{1/n} \geq \left( \frac{\sup_{z \in S}\prod_{i=1}^n |z-z_i|^2}{C_{\varepsilon/2} e^{-n\varepsilon/2 }}\right)^{1/n}.\]
We take the logarithm of this expression to obtain:
\[\frac{1}{n} \log  \int  e^{2n\int_{\CC}\log|z-w|d\mu_n(w)}d\nu_S(z) \geq 2\sup_{z\in S} \int_{\CC} \log|z-w|d\mu_n(w) - \varepsilon/2 + \frac{\log C_\varepsilon/2}{n}\]
taking $n>N_0$ sufficiently large ends the proof of the lemma.
\end{proof}

In fact, this inequality is used to prove the large deviations upper bound on $\CC$. To prove the large deviations upper bound on $\mathcal{S}^2$, we need an analog of this inequality.

\begin{lemma}\label{bernmarko2}
For all $\varepsilon > 0$, there exists an integer $N_0$ such that for all $n>N_0$ we have:
\begin{equation*}
\frac{1}{n} \log  \int e^{2n\int_{\CC}\log|x-y|dT^*\mu_n(y)}dT^*\nu_S(x) \geq 2\sup_{x \in T(S)} \int\log|x-y|dT^*\mu_n(y) - \varepsilon.
\end{equation*}

\end{lemma}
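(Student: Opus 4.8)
The plan is to pull the inequality back to $\CC$ through the stereographic projection $T$ and apply the Bernstein--Markov inequality for $\nu_S$, exactly as in the proof of Lemma~\ref{bernmarko}. Write $z_1,\dots,z_n$ for the zeros, so that $T^*\mu_n=\frac1n\sum_j\delta_{T(z_j)}$, and note that $e^{2n\int\log|x-y|\,dT^*\mu_n(y)}$ evaluated at $x=T(z)$ equals $\prod_{j=1}^n|T(z)-T(z_j)|^2$. By \eqref{relations} and \eqref{relation2},
\begin{equation*}
\prod_{j=1}^n|T(z)-T(z_j)|^2=(1-|T(z)|^2)^n\Big(\prod_{j=1}^n(1-|T(z_j)|^2)\Big)\prod_{j=1}^n|z-z_j|^2 ,
\end{equation*}
and on the unit circle $S$ one has $1-|T(z)|^2=\tfrac12$. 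Pushing the integral back to $S$ therefore gives
\begin{equation*}
\int e^{2n\int\log|x-y|\,dT^*\mu_n(y)}\,dT^*\nu_S(x)=2^{-n}\Big(\prod_{j=1}^n(1-|T(z_j)|^2)\Big)\int\prod_{j=1}^n|z-z_j|^2\,d\nu_S(z).
\end{equation*}

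Next I would apply the Bernstein--Markov inequality for $\nu_S$ to the polynomial $\prod_{j=1}^n(z-z_j)\in\CC_n[X]$: for every $\varepsilon'>0$ there is a constant $C_{\varepsilon'}\ge1$ with $\int\prod_j|z-z_j|^2\,d\nu_S\ge C_{\varepsilon'}^{-2}e^{-n\varepsilon'}\sup_{z\in S}\prod_j|z-z_j|^2$. Taking $\frac1n\log$ of the previous display and using $1-|T(z_j)|^2=(1+|z_j|^2)^{-1}$ together with $\frac1n\log\sup_{z\in S}\prod_j|z-z_j|^2=2\sup_{z\in S}\int\log|z-w|\,d\mu_n(w)$, the left-hand side of the lemma is bounded below by
\begin{equation*}
-\frac1n\sum_{j=1}^n\log(1+|z_j|^2)-\log2+2\sup_{z\in S}\int\log|z-w|\,d\mu_n(w)-\varepsilon'-\frac{2\log C_{\varepsilon'}}{n}.
\end{equation*}

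Finally I would rewrite the target right-hand side in the same coordinates. Since $|z|=1$ on $S$, \eqref{relations}--\eqref{relation2} give $\log|T(z)-T(z_j)|=\log|z-z_j|-\frac12\log2-\frac12\log(1+|z_j|^2)$, and the last two terms do not depend on $z$, so
\begin{equation*}
2\sup_{x\in T(S)}\int\log|x-y|\,dT^*\mu_n(y)=2\sup_{z\in S}\int\log|z-w|\,d\mu_n(w)-\log2-\frac1n\sum_{j=1}^n\log(1+|z_j|^2).
\end{equation*}
Comparing the two displays, the terms $\log2$ and $\frac1n\sum_j\log(1+|z_j|^2)$ cancel, and it remains only to absorb $\varepsilon'+\frac{2\log C_{\varepsilon'}}{n}$ into $\varepsilon$; since $C_{\varepsilon'}$ may be chosen $\ge1$ it suffices to take $\varepsilon'=\varepsilon/2$ and $N_0$ large enough that $\frac{2\log C_{\varepsilon/2}}{n}\le\varepsilon/2$ for $n>N_0$. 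The argument is entirely a transfer of the planar Bernstein--Markov estimate, with no genuine difficulty; the only point that needs care is the bookkeeping of the conformal factors $\prod_j(1+|z_j|^2)$ and of the constants $\log2$, which cancel exactly precisely because $T$ maps $S$ onto the equator of $\mathcal{S}^2$, where $1-|T(z)|^2$ is the constant $\tfrac12$.
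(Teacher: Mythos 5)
Your proof is correct and follows essentially the same route as the paper: both transfer Lemma~\ref{bernmarko} to the sphere via the identities \eqref{relations}--\eqref{relation2} and exploit that the conformal factors (the constant $\log 2$ from $1-|T(z)|^2=\tfrac12$ on the equator, and the $z$-independent term $\tfrac1n\sum_j\log(1+|z_j|^2)$) appear identically on both sides and cancel. Your write-up merely tracks these cancellations more explicitly than the paper's terse ``lift it up on the sphere.''
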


\begin{proof}
	We start from Lemma \ref{bernmarko} and lift it up on the sphere:
\begin{align*}
\frac{1}{n} \log \int \prod_{i=1}^n \frac{|T(z)-T(z_i)|^2}{(1-|T(z)|^2)(1-|T(z_i)|^2)} & d\nu_S(z) \geq\\  \sup_{z\in S} \int_{\CC} \log& \frac{|T(z)-T(w)|^2}{(1-|T(z)|^2)(1-|T(w)|^2)}d\mu_n(w) - \varepsilon + \frac{\log C_\varepsilon}{n}.
\end{align*}
	In terms of push-foward measures we obtain:
\[\frac{1}{n} \log \int \prod_{i=1}^n |x-x_i| dT^*\nu_S(z) \geq \sup_{x\in T(S)} \int_{\CC} \log|x-x_i|dT^*\mu_n(w) - \varepsilon + \frac{\log C_\varepsilon}{n}.\]
\end{proof}

\begin{itemize}
	\item \textbf{Large deviations upper bound in the complex case}
\end{itemize}

We prove the upper bound part of the large deviation principle in the complex case. Let $\sigma \in \mathcal{M}_1(\mathcal{S}^2)$, we prove that:
$$\lim\limits_{\delta \rightarrow O} \varlimsup_{n\rightarrow \infty} \frac{1}{\beta_n}\log \PP(\bar{\mu}_n \in B(\sigma, \delta)) \leq -I_{\mathcal{S}^2}(\sigma).$$
Proving this inequality is sufficient to obtain the upper bound as $\mathcal{S}^2$ is a compact set. Indeed, a weak large deviation principle implies a full large deviation principle when we have exponential tightness, which is automatic on a compact set.

If we write $$A_1=Z_n	\PP(\bar{\mu}_n \in B(\sigma, \delta))$$ then we have for any $M>0$ and for any $\delta >0$, using Lemma \ref{bernmarko2}:
\begin{align*}
A_1  = &\int  1_{\bar{\mu}_n \in B(\sigma, \delta)}\exp \left( -\beta_n \left[  \mathcal{E}_{\neq}(\bar{\mu}_n) + \frac{n+1}{n^2}\log \int \prod_{i=1}^n |x-x_j|^2 2^n dT^*\nu_S(x)\right]\right) d\kappa_n 
\\ \leq& \int \exp \left(-\beta_n \left[  \mathcal{E}_{\neq}(\bar{\mu}_n) + \frac{n+1}{n^2}\log \int \prod_{i=1}^n |x-x_j|^2 2^n dT^*\nu_S(x)\right]\right) d\kappa_n
\\
	 \leq &  \int \exp \left(-\beta_n\left[-\frac{1}{n^2}\sum_{i\neq j} \log_M |x_i-x_j| + \frac{n+1}{n}( J_{\mathcal{S}^2}(\mu_n)  + \varepsilon ) \right] \right) d\kappa_n   
\\
	 \leq & \int \exp \left(-\beta_n\left[ \iint_{\neq} - \log_M |x-y| d\bar{\mu}_n(x)d\bar{\mu}_n(y)  +\frac{n+1}{n} (J_{\mathcal{S}^2}(\mu_n)+ \varepsilon ) \right]\right)  d\kappa_n.
\end{align*}
We now observe the following:
\begin{equation}
\frac{1}{n^2}\sum_{i\neq j} \log_M |x_i-x_j|=\iint_{\neq}  \log_M |x-y| d\bar{\mu}_n(x)d\bar{\mu}_n(y) -\frac{M}{n} = \mathcal{E}_{\mathcal{S}^2}^M(\bar{\mu}_n).
\end{equation} 
We also notice that:
\begin{equation}\label{eq:diago}
\frac{1}{\beta_n} \log \int 1 d\kappa_n  \xrightarrow[n\rightarrow \infty]{} 0. 
\end{equation}
Indeed, as $\kappa_n$ is a product of finite measures, we have $\frac{1}{n}\log \int 1 d\kappa_n = \log \int 1  d\kappa_1$.
Then, by taking the logarithm and the superior limit we obtain:
$$\varlimsup_{n\to\infty} \frac{1}{\beta_n} \log Z_n \PP(\bar{\mu}_n \in B(\sigma, \delta)) \leq \sup_{B(\sigma, \delta)} -I^M_{\mathcal{S}^2} + \varepsilon$$
As $I^M_{\mathcal{S}^2}$ is a continuous function, we have when $\delta$ converges towards $0$:
$$\lim\limits_{\delta \rightarrow 0}\varlimsup_{n\to\infty} \frac{1}{\beta_n} \log Z_n \PP(\bar{\mu}_n \in B(\sigma, \delta)) \leq  -I^M_{\mathcal{S}^2}(\sigma) + \varepsilon.$$
We end the proof of the upper bound by letting $M \rightarrow \infty$ (and using the monotone convergence theorem) and then letting $\varepsilon \rightarrow 0$.
We will get rid of the normalizing constant once we have proved the lower bound.

\begin{itemize}
	\item \textbf{Large deviations upper bound in the real case}
\end{itemize}

We prove the same bound as previously in the real case. The proof is nearly the same and we will omit what is exactly the same in the two cases. The only difference is that the law of the roots of $P_n$ in the real case is not absolutely continuous with respect to a product measure, but is a mix between such measures.

As in the complex case, we want to prove that:
$$\lim\limits_{\delta \rightarrow O} \varlimsup_{n\rightarrow \infty} \frac{1}{\beta_n}\log \PP(\bar{\mu}_n \in B(\sigma, \delta)) \leq -\frac{1}{2}I_{\mathcal{S}^2}(\sigma).$$
We begin with 
$$ \PP(\bar{\mu}_n \in B(\sigma, \delta))= \sum_{k=0}^{\lfloor n/2 \rfloor}\frac{1}{Z_{n,k}} I_{n,k,\delta}$$
where 
\begin{equation}
I_{n,k,\delta} = \int 1_{\bar{\mu}_n \in B(\sigma, \delta)} \frac{ \prod_{i<j}|x_i-x_j|}{ (\int\prod_{i=1}^n |x-x_i|^2 2^ndT^*\nu_S)^{(n+1)/2}} \rho_{n,k} . 
\end{equation}

We will prove the upper-bound for each of the $I_{n,k,\delta}$ uniformly in $k$ which will be sufficient to prove the upper bound for the non-normalized measure. The constants will be treated once we will have the full large deviation principle.

The estimates used in the complex case do no rely on the complex structures but only on ``algebraic'' inequalities so the same computations on the $I_{n,k,\delta}$ can be done.  As the formulas are the same, we obtain the same bounds.
\begin{align*}
I_{n,k,\delta}  \leq & \int 1_{\bar{\mu}_n \in B(\sigma, \delta)} \exp \left( -\beta_n\left[  \frac{1}{2}\mathcal{E}_{\neq}(\bar{\mu}_n)  +\frac{n+1}{2n^2} \log \int\prod_{i=1}^N |x-x_i|^2 2^ndT^*\nu_S(x) \right]\right) d\rho_{n,k} \\
 \leq & \int \exp \left(-\beta_n\left[  \frac{1}{2}\mathcal{E}_{\neq}(\bar{\mu}_n) +\frac{n+1}{2n^2} \log \int\prod_{i=1}^N |x-x_i|^2 2^ndT^*\nu_S(x) \right]\right) d\rho_{n,k}
\\
\leq &  \int \exp \left(-\beta_n\left[ - \frac{1}{2n^2}\sum_{i\neq j}\log_M|x_i-x_j|  +\frac{n+1}{2n} (J_{\mathcal{S}^2}(\mu_n)+  \varepsilon) \right]\right) d\rho_{n,k}    
\\
\leq & \int \exp \left(-\beta_n\left[ -\frac{1}{2}\iint_{\neq}  \log_M |x-y| d\bar{\mu}_n(x)d\bar{\mu}_n(y)  +\frac{n+1}{2n} (J_{\mathcal{S}^2}(\mu_n)+ \varepsilon ) \right] \right) d\rho_{n,k}.
\end{align*}
We need to check that 
$$ \frac{1}{\beta_n} \log \int \rho_{n,k} \xrightarrow[n \rightarrow \infty]{} 0.$$ Just like in the complex case, this is only a consequence of the fact that $\rho_n$ is a product measure.

Using again \eqref{eq:diago}, we apply logarithm to both sides of the inequality, divide by $\beta_n$ to find:
$$\varlimsup_{n\to\infty} \frac{1}{\beta_n} \log (I_{n,k,\delta}) \leq \sup_{B(\sigma, \delta)} -\tilde{I}^M_{\mathcal{S}^2} + \varepsilon.$$
By the monotone convergence theorem, we let $M\rightarrow \infty$ and then $\varepsilon \rightarrow 0$ which ends the proof of the inequality.
As the upper bound is uniform in $k$, it ends the proof of the lower bound.

\subsubsection{Large deviations lower bound.}
In this section we prove the lower bound of the large deviation principle. We notice that the rate function is the sum of a lower semi-continuous function $\mathcal{E}_{\mathcal{S}^2}$ and of the continuous function $J_{\mathcal{S}^2}$. The continuity of $J_{\mathcal{S}^2}$ allows us to treat only the logarithmic energy, which is well known in potential theory.
\begin{itemize}
	\item \textbf{Large deviations lower bound in the complex case}
\end{itemize}
Let $\sigma \in \mathcal{M}_1(\mathcal{S}^2)$, we prove that:
$$\lim\limits_{\delta \rightarrow 0} \varliminf_{n\rightarrow \infty} \frac{1}{\beta_n}\log Z_n \PP(\bar{\mu}_n \in B(\sigma, \delta)) \geq -I_{\mathcal{S}^2}(\sigma).$$
We can assume that the measure $\sigma$ satisfies $I_{\mathcal{S}^2}(\sigma)< \infty$ as if the rate function is infinite this bound holds clearly. We notice that it is equivalent to have $\mathcal{E}_{\mathcal{S}^2}<\infty$. In particular, such a measure $\sigma$ has no atom (and $\sigma(\{N\})=0$).
For any $(x_1,\dots,x_n)$, we have:
\begin{equation}\label{ineqbidon}
\frac{1}{n} \log \int \prod_{i=1}^n |x-x_j|^2 2^n dT^*\nu_S(x)\leq J_{\mathcal{S}^2}(\bar{\mu}_n).
\end{equation}
If we write:
$$A_1= Z_n	\PP(\bar{\mu}_n \in B(\sigma, \delta))$$
then we have, for any $\varepsilon >0$, if $\delta$ is small enough, using \eqref{ineqbidon}:
\begin{align*}
	 A_1 = &\int  1_{\bar{\mu}_n \in B(\sigma, \delta)}\exp \left(-\beta_n \left[  \mathcal{E}_{\neq}(\bar{\mu}_n) + \frac{n+1}{n^2}\log \int \prod_{i=1}^n |x-x_j|^2 2^n dT^*\nu_S(x)\right] \right)d\kappa_n    \\
	  \geq & \int  1_{\bar{\mu}_n \in B(\sigma, \delta)}\exp \left(-\beta_n \left[  \mathcal{E}_{\neq}(\bar{\mu}_n) + \frac{n+1}{n}J_{\mathcal{S}^2}(\bar{\mu}_n)\right]\right) d\kappa_n \\
	   \geq & \exp \left(- \beta_n \left[ \frac{n+1}{n}(J_{\mathcal{S}^2}(\sigma) + \varepsilon )\right]\right)\int  1_{\bar{\mu}_n \in B(\sigma, \delta)}\exp \left(-\beta_n   \mathcal{E}_{\neq}(\bar{\mu}_n) \right) d\kappa_n .
\end{align*}
The last inequality comes from the continuity of $J_{\mathcal{S}^2}$. To study the right integral, we will use the stereographic projection $T^{-1}$. Thanks to the relation \eqref{relations} we obtain:
$$\sum_{i \neq j} \log |x_i-x_j| = \sum_{i \neq j} \log |T^{-1}(x_i)-T^{-1}(x_j)| -(n-1) \sum_{i=1}^{n} \log (1+ |T^{-1}(x_i)|^2).$$
Pushing back the measure $\kappa_n$ leads to:
\begin{multline*}
	\int  1_{\bar{\mu}_n \in B(\sigma, \delta)}\exp\left( -\beta_n \mathcal{E}_{\neq}(\bar{\mu}_n)\right) d\kappa_n   \\ =\int 1_{\mu_n \in T^{-1}B(\sigma, \delta)} \exp \left(-\beta_n \left[ \mathcal{E}_{\neq}(\mu_n) -\frac{n+1}{n^2} \sum_{i=1}^{n} \log (1+ |z_i|^2)  \right]\right)
d\ell_{\CC^n}(z_1,\dots,z_n) .\\
\end{multline*}
We reduced the problem to prove the large deviations lower bound for a  Coulomb gas in $\CC$ with potential $V(z)=\log(1+|z|^2)$ and temperature $\beta_n$. We will prove in Proposition \ref{homeo} that $T^*$ is an homeomorphism from $\mathcal{M}_1(\CC)$ to $\mathcal{M}_1(\mathcal{S}^2) \mid \mu(\{N\})=0\}$ so the set $T^{-1*} B(\sigma,\delta)$ is a neighborhood of $T^{-1*} \sigma$.

The proof of the lower bound can be found in \cite[chapter 5 page 220]{hiaipetz}. We give a proof for the sake of completeness.

We can restrict the proof to the case where $\sigma$ is absolutely continuous with respect to the Lebesgue measure and with density bounded from above and below with rectangle support. Indeed, as the function $-\log|z-w| +\frac{1}{2} \log(1+|z|^2) +\frac{1}{2} \log(1+|w|^2)$ is bounded from below, we can assume that $\sigma$ is supported in a rectangle, as we can approximate $\sigma$ with $\frac{1_{[-m,m]^2}}{\sigma([-m,m]^2)}\sigma$. Convolution with a smooth probability density $\phi_{\varepsilon}$ supported in $[-\varepsilon,\varepsilon]^2$ ensure us the existence of a positive density with respect the Lebesgue measure. The functional: $$E: \mu \mapsto \iint  -\log|z-w| +\frac{1}{2} \log(1+|z|^2) +\frac{1}{2} \log(1+|w|^2)d\mu(z)d\mu(w)$$ is lower semi-continuous, hence we have for $\varepsilon$ small enough:
\[
E(\phi_{\varepsilon} \ast \sigma) \geq E(\sigma)
\]
so we can restrict to measures with positive densities. Those two remarks put together allow us to assume that the measure $\sigma$ is supported in a rectangle $[a,b]\times [c,d]$ with a density $h$ with respect to the Lebesgue measure on $\CC$ satisfying for all $z \in [a,b]\times [c,d]$: 
$$\frac{1}{C} \leq h(z) \leq C$$ for some constant $C>0$. For each $n\in \NN$, let $m=\lfloor \sqrt{n} \rfloor$. Let $x_0=a < x_1 < \dots < x_m=b$ such that for each $j \in \{0, \dots, n-1\}$ 
$$ \sigma([x_j,x_{j+1}]\times [c,d])= \frac{1}{m}.$$
\begin{figure}\label{fig2}
\centering
	\includegraphics[scale=1]{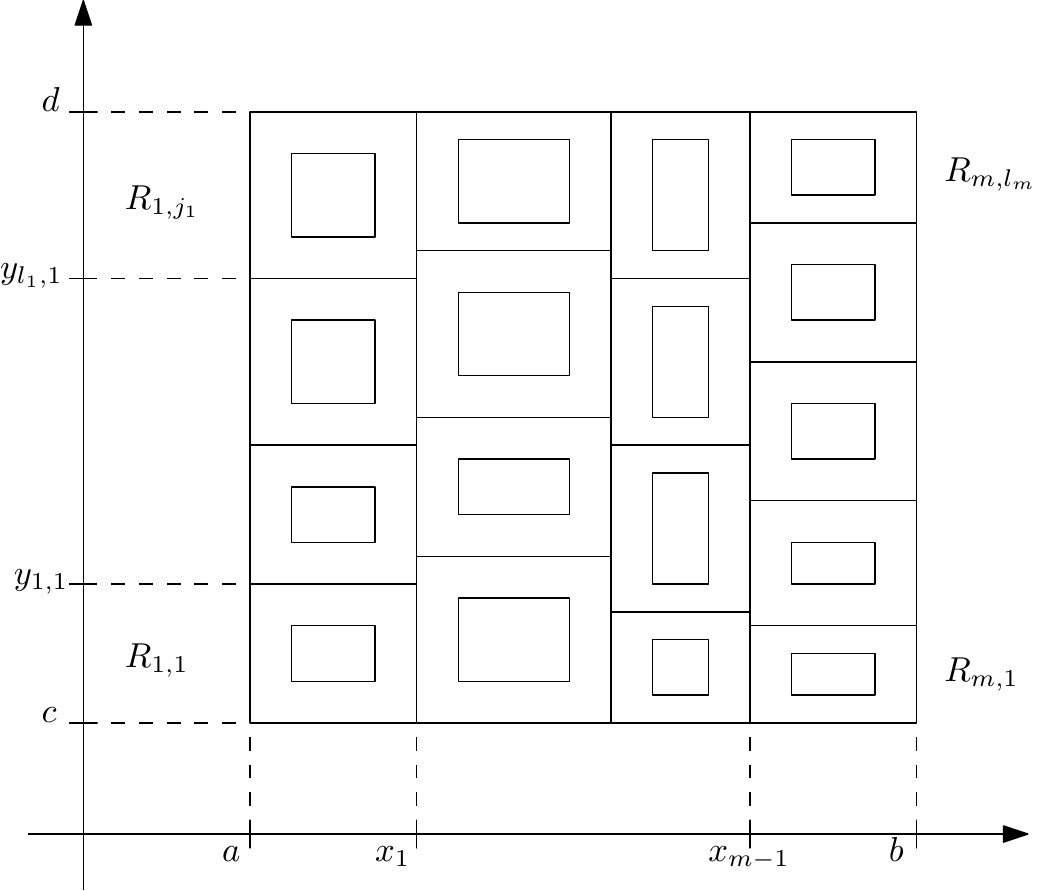}	
	\caption{\label{image} Division or the support in rectangles with $n=17$. So we divide the support in $4$ columns which we divide in $4$ or $5$ rectangles to obtain the total number of $17$ pieces.}
\end{figure}
We have cut the support in vertical slides of equal mass. We divide each slide in rectangles of equal mass (see Figure 2)and we adjust their number in order to have a total of $n$ parts.
As $m^2 \leq n \leq m(m+2)$, we can find $l_1, \dots ,l_m$ satisfying $\sum_{k=1}^{n}l_k=n$ and for each $j\in \{0,n-1\}$ a set of points $y_{j,0}=c < y_{j,1}< \dots <y_{j,l_j}=d$ such that for every $i \in \{0,\dots,n-1\}$ and every $j\in \{0,l_i-1\}$ we have:
$$\sigma([x_{i-1},x_i]\times [y_{j-1},y_{j}])= \frac{1}{ml_i}.$$
This construction gives us a set of rectangles. For each one of them, we consider the smaller rectangle with same center as the original one but homothetic with ratio $\frac{1}{2}$ (see Figure 2 for an illustration). We call them $R_{i,j}=[x_{i-1}',x_{i}']\times [y_{j-1}',y_{j}']$. As the density $h$ is bounded from above and from below we see that the diameter of the rectangles uniformly tends towards 0 as $n$ goes to infinity. More precisely we have:
$$ \lim\limits_{n \rightarrow \infty} \max_{i,j} \mathrm{diam}(R_{i,j}) \rightarrow 0.$$
We will need to control the area of each rectangle, we want this area no to go to zero too fast such that the product of the areas of the rectangles is negligible compared to $e^{- \beta_n}$. One can easily check that for each $i,j$ we have, for some constant $C_1$ depending only of $h$:
$$ \int_{R_{i,j}}d\sigma(z) \geq \frac{C_1}{n} .$$ We label the $n$ rectangles $R_1, \dots, R_n$ (and $R_1', \dots, R_n' $)and we define: $$\Delta_n= \{ (z_1,\dots,z_n)\in \CC^n \mid \forall 1 \leq i \leq n , z_i \in  R_i' \}.$$
Let $f$ be a bounded 1-Lipschitz function. For any $n$  and any $(z_1, \dots , z_n)\in \Delta_n$ we have:
\begin{align*}
\Big\vert \int f(z) d\sigma(z) - \sum_{i,j} \frac{1}{ml_i} f(z_{i,j})\Big\vert & \leq \sum_{i,j} \int_{R_{i,j}} |f(z)-f(z_{i,j})|d\sigma(z)  \\
&  \leq \sum_{i} \int_{R_{i,j}} |z-z_i|d\sigma(z) \\
& \leq \max_{i,j}(\mathrm{diam} R_{i,j}) \xrightarrow[n\rightarrow \infty]{} 0
\end{align*}
and for any bounded function $f$: 
$$ \frac{1}{n} \sum_{i,j}f(z_{i,j}) - \sum_{i,j} \frac{1}{ml_i}f(z_{i,j}) \xrightarrow[n\rightarrow \infty]{} 0.$$
Hence, for any fixed $\delta$, if $n$ is large enough, for any $(z_1,\dots,z_n)\in \Delta_n$: $$\frac{1}{n}\sum_{i=1}^{n} \delta_{z_i} \in B(\sigma, \delta).$$
We define $$\Delta^n= \bigcup\limits_{p\in \mathcal{\sigma}_n} \{(z_1, \dots, z_n)\in \CC^n \mid (z_{p(1)},\dots,z_{p(n)})\in \Delta_n  \}.$$We only made the definition of $\Delta_n$ symmetric, as there was no reason for $z_1$ to be at the top left corner of the support. It is clear that we still have, for any $\delta$ and $n$ large enough, and for any $(z_1,\dots,z_n) \in \Delta^n$, $\frac{1}{n}\sum_{k=1}^{n}\delta_{z_k} \in B(\sigma,\delta)$. Notice that:
$$\text{Vol}(\Delta^n)= n! \text{Vol}(\Delta_n) \geq n!\left(\frac{C_1}{n}\right)^n$$
which implies, as $\beta_n \gg n$ $$\lim_{n\to\infty} \frac{1}{\beta_n} \log \text{Vol}(\Delta^n) = 0.$$
Then we have, for $n$ large enough:
\begin{align*}
	&\int  1_{\mu_n \in T^{-1}B(\sigma, \delta)} \exp \left(-\beta_n \left[\mathcal{E}_{\neq}(\mu_n) +\frac{n+1}{n^2} \sum_{i=1}^{n} \log (1+ |z_i|^2)  \right]\right)
d\ell_{\CC^n}(z_1,\dots,z_n) \\ 
	&\qquad \geq  \int  1_{\Delta^n} \exp\left( -\beta_n \left[\mathcal{E}_{\neq}(\mu_n) +\frac{n+1}{n^2} \sum_{i=1}^{n} \log (1+ |z_i|^2)  \right]\right)
d\ell_{\CC^n}(z_1,\dots,z_n) \\
	 & \qquad\geq  \exp \left( -\beta_n\left[\frac{n+1}{n^2}\sum_{k=1}^{n} \max_{z \in R_i'}\log(1+|z|^2)  - \frac{1}{n^2} \sum_{i \neq j} \min_{z\in R_i' , w \in R_j'} \log|z-w|   \right] \right)\text{Vol}(\Delta^n).
\end{align*}
To obtain the lower bound, it is sufficient to prove that we have:
\begin{equation}
\lim\limits_{n\rightarrow \infty} \sum_{k=1}^{n} \max_{z \in R_i'}\log(1+|z|^2) = \int \log(1+|z|^2) d\sigma(z)
\end{equation} 
and that 
\begin{equation}
	\varliminf_{n \rightarrow \infty} \frac{1}{n^2} \sum_{i \neq j} \min_{z\in R_i' , w \in R_j'} \log|z-w| \geq \iint \log|z-w|d\sigma(z)d\sigma(w)= \mathcal{E}(\sigma).
\end{equation}
The fist limit is easy to prove as $z \mapsto \log(1+|z|^2)$ is uniformly continuous on the support of $\sigma$ (which is a rectangle) and is deduced from the definition of the Riemann integral.

As $d_n=\max_{i,j} \text{diam}(R_{i,j})$ is of order $\frac{1}{\sqrt{n}}$ and $\min_{R_i',R_j'}|z-w|$ is also of order $\frac{1}{\sqrt{n}}$, then, thanks to the bound:
\begin{equation}
\max_{R_i,R_j}|z-w| \leq 2d_n + \min_{R_i',R_j'}|z-w|
\end{equation}
there exists a constant $A>0$ such that 
\begin{equation}
A \min_{z\in R_i' , w \in R_j'} |z-w| \geq  \max_{z\in R_i , w \in R_j} |z-w|.
\end{equation}
This relation is the reason why we reduced the sizes of the rectangles in the construction so that we can control the distance between the rectangles.
To end the proof we notice that, for any $\varepsilon >0$, we have:
\begin{equation}
\lim\limits_{n \rightarrow \infty} \frac{1}{n^2}\# \{ i \neq j \mid \frac{\max_{z\in R_i , w \in R_j} |z-w|}{\min_{z\in R_i' , w \in R_j'} |z-w|}\leq 1+\varepsilon \}=1. 
\end{equation} 
Indeed, for any fixed $\varepsilon>0$, the cardinal of the complement of the set considered above is $O(n)$ as this condition is verified as soon as the rectangles are not too close.

We call
$$B= \mathcal{E}(\sigma) - \sum_{i\neq j} \log \left(\min_{z \in R_i', w \in R_j'}|z-w|\right) \frac{1}{ml_i \times m l_j }.$$
Since
\begin{equation}
\iint \log |z-w|d\sigma(z) d\sigma(w) \leq \sum_{i\neq j} \log \max_{z \in R_i, w \in R_j}|z-w| \frac{1}{ml_i \times m l_j}
\end{equation} 
then for every $i$, $n\leq ml_i$ then, for every $\varepsilon>0$, 
then we have :
\begin{align*}
B   \leq & \sum_{i\neq j} \log \left(\max_{z \in R_i, w \in R_j}|z-w|\right) \frac{1}{ml_i \times m l_j}  - \sum_{i\neq j} \log \left(\min_{z \in R_i', w \in R_j'}|z-w|\right) \frac{1}{ml_i \times m l_j } \\  
 \leq &\sum_{i\neq j} \log \left(\frac{\max_{z\in R_i , w \in R_j} |z-w|}{\min_{z\in R_i' , w \in R_j'} |z-w|}\right) \frac{1}{ml_i \times m l_j} \\
 \leq &\frac{1}{n^2} \sum_{i\neq j} \log \left( \frac{\max_{z\in R_i , w \in R_j} |z-w|}{\min_{z\in R_i' , w \in R_j'} |z-w|}\right) \\
 \leq & \frac{1}{n^2}\# \{ i \neq j \mid \frac{\max_{z\in R_i , w \in R_j} |z-w|}{\min_{z\in R_i' , w \in R_j'} |z-w|}\leq 1+\varepsilon \} \log(1+\varepsilon)  \\  & \qquad \qquad \qquad +  \frac{1}{n^2}\left[1-\# \{ i \neq j \mid \frac{\max_{z\in R_i , w \in R_j} |z-w|}{\min_{z\in R_i' , w \in R_j'} |z-w|}\}\right] \log A.
\end{align*}
Then we take the limit superior in both sides, and the limit when $\varepsilon \rightarrow 0$
\begin{align*}
\mathcal{E}(\sigma) - \varliminf_{n\to\infty} \frac{1}{n^2}\sum_{i\neq j} \log &\left( \min_{z \in R_i', w \in R_j'}|z-w|\right) \frac{1}{ml_i \times m l_j } \\ & \leq \varlimsup_{n\to\infty} \frac{1}{n^2} \sum_{i\neq j} \log \left(\frac{\max_{z\in R_i , w \in R_j} |z-w|}{\min_{z\in R_i' , w \in R_j'} |z-w|}\right)=0
\end{align*}  
which ends the proof of the lower bound in the complex case.

\begin{itemize}
\item \textbf{Large deviations lower bound in the real case}
\end{itemize}

In this section, we prove the lower bound of the large deviation principle of Proposition \ref{thmreel}. We define the non-normalised measures:
$$\PP_{n,k}(x_1,\dots,x_n)=\exp \left( -\beta_n\frac{1}{2} H(x_1,\dots,x_n) \right)dL_{n,k}(x_1,\dots,x_n)$$ and $$\tilde{\PP}= 	\sum_{k=0}^{\lfloor n/2 \rfloor} \PP_{n,k}.$$
These measures are not probability measures, as we omitted the factors $Z_{n,k}$. We will recover the lower bound for the initial measure afterwards, using a uniform control over the constants $Z_{n,k}$.
We prove that  for any $\sigma \in \mathcal{M}_1(\mathcal{S}^2)$:
$$\lim\limits_{\delta \rightarrow O} \varliminf_{n\rightarrow \infty} \frac{1}{\beta_n}\tilde{\PP}(\bar{\mu}_n \in B(\sigma, \delta)) \geq -I_{\mathcal{S}^2}(\sigma).$$

We can assume that the measure $\sigma$ satisfies $	\tilde{I}_{\mathcal{S}^2}(\sigma)< \infty$ as if the rate function is infinite this bound holds clearly. We notice that it is equivalent to have $\mathcal{E}_{\mathcal{S}^2}(\sigma)<\infty$ and the push-forward of $\sigma$ by the inverse stereographic projection is invariant under conjugation.

The strategy adopted here is very similar to what was done in \cite{benarouszeitouni} for the real Ginibre ensemble. The distribution of $(z_1,\dots,z_n)$ is a mixture between several distributions, each distribution being related to the number of real zeros. For any $\varepsilon>0$, if $\delta$ is small enough, using \eqref{ineqbidon} and as $J_{\mathcal{S}^2}$ is continuous:
\begin{align*}
	&\tilde{\PP}(\bar{\mu}_n \in B(\sigma, \delta))  \\ \quad & =    \sum_{k=0}^{\lfloor n/2 \rfloor} \int  \exp \left(-\beta_n\left[  \frac{1}{2}\mathcal{E}_{\neq}(\bar{\mu}_n)  -\frac{n+1}{2n^2} \log \int\prod_{i=1}^N |x-x_i|^2 2^ndT^*\nu_S(x) \right]\right)   1_{\bar{\mu}_n \in B(\sigma, \delta)}d\rho_{n,k} \\
	& \geq \sum_{k=0}^{\lfloor n/2 \rfloor} \int  \exp\left( -\beta_n\left[  \frac{1}{2}\mathcal{E}_{\neq}(\bar{\mu}_n)  -\frac{n+1}{2n} J_{\mathcal{S}^2}(\bar{\mu}_n) \right] \right) 1_{\bar{\mu}_n \in B(\sigma, \delta)}d\rho_{n,k} \\	
	&\geq  \exp \left(- \beta_n \left[ \frac{n+1}{n}(J_{\mathcal{S}^2}(\sigma) + \varepsilon )\right]\right)\sum_{k=0}^{\lfloor n/2 \rfloor} \int   \exp \left( -\beta_n  \frac{1}{2}\mathcal{E}_{\neq}(\bar{\mu}_n) \right)    1_{\bar{\mu}_n \in B(\sigma, \delta)}d\rho_{n,k} \\
	&\geq  \exp \left(- \beta_n \left[ \frac{n+1}{n}(J_{\mathcal{S}^2}(\sigma) + \varepsilon )\right]\right) \int   \exp \left(-\beta_n  \frac{1}{2}\mathcal{E}_{\neq}(\bar{\mu}_n)   \right) 1_{\bar{\mu}_n \in B(\sigma, \delta)}d\rho_{n,\lfloor n/2 \rfloor}.
\end{align*}
As we deal with a lower bound, we can only consider the last term of the sum, corresponding to zero or one real root (if $n$ is even or odd). Like in the complex case, we use the inverse stereographic projection to express the last integral:
\begin{multline*}
	\int  1_{\bar{\mu}_n \in B(\sigma, \delta)}\exp \left(-\beta_n  \frac{1}{2}\mathcal{E}_{\neq}(\bar{\mu}_n)   \right) d\rho_{n,\lfloor n/2 \rfloor}   \\ = \int 1_{\mu_n \in T^{-1}B(\sigma, \delta)} \exp \left(-\beta_n \left[\frac{1}{2}\mathcal{E}_{\neq}(\mu_n) -\frac{n+1}{2n^2} \sum_{i=1}^{n} \log (1+ |z_i|^2)  \right]\right)
	d\ell_{n,\lfloor n/2 \rfloor}(z_1,\dots,z_n).
\end{multline*}
Notice that when $n$ is odd, the first coordinate is always real. Like in the complex case, we reduced the proof of the lower bound to the proof of the lower bound for a Coulomb gas in the plane with potential $\log(1+|z|^2)$, except that this gas must have at most one particle on the real axis (depending on $n$). Strategy of the proof is exactly the same as previously: for regular measures (supported in a rectangle with density with respect to the Lebesgue measure bounded from above and from below), we prove the lower bound by approximating the measure with an atomic measure whose atoms can be anywhere in well chosen rectangles. To perform the approximation, we have to show that we can still make this construction respecting the invariance under conjugation and the fact that we must have at most one real root. 

When $n$ is even, we cut the upper half of the support in boxes of mass $\frac{1}{2\lfloor \sqrt{n}\rfloor}$ and take the half-sized sub-boxes (the $R_i$'s and the $R_i'$'s), illustrated in Figure 3. Then we describe the lower part of the support by considering the conjugates of the rectangles $\bar{R}_i$ and $\bar{R}_i'$. We define $$\Delta_n= \{ (z_1,\dots,z_n)\in \CC^n \mid \forall 1 \leq i \leq n , z_i \in  R_i' \}$$ and the rest of the construction is exactly the same.

\begin{figure}[h!]
\centering
	\includegraphics[scale=1]{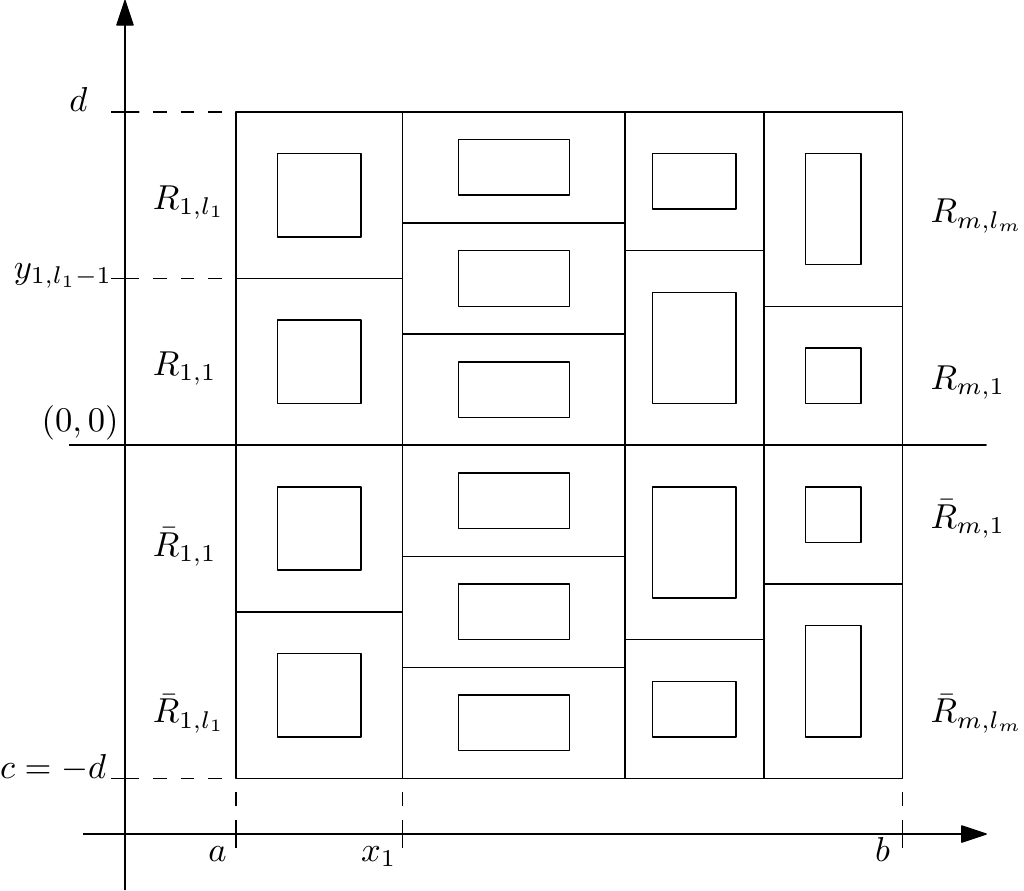}
		\caption{\label{image} Division or the support in rectangles, $n$ even. Here $n=18$, we divide the support in $4$ columns. Then we divide the columns in order to obtain a total number of $18$ rectangles.}
	
\end{figure}
When $n$ is odd, the construction is nearly the same, except that we must place one point on the real line. We divide the upper half of the support into columns of equal mass $\frac{1}{2\lfloor\sqrt{ n }\rfloor}$, then we divide each column in rectangles of equal mass such that the total number of rectangles is $\frac{n+1}{2}$. We consider the conjugates of the rectangles  so that we have cut the support of $\sigma$ in rectangles of known mass respecting the symmetry by conjugation (Figure 4). Finally, on the 1st column, we consider the union of the two rectangles touching the real axis. Hence, we have a partition of the support in $n$ rectangles. Now we can consider the smaller rectangles like we used to do, except for the one crossing the real axis for which we only keep the intersection of the smaller rectangle with $\RR$, represented by a thicker line in Figure 4.

We can also define  $\Delta_n= \{ (z_1,\dots,z_n)\in \RR \times\CC^{n-1} \mid \forall 1 \leq i \leq n , z_i \in  R_i' \}$
which differs from the previous construction only by the fact that the first variable is real, and $$\Delta^n= \{ (z_1,\dots,z_n)\in \RR \times\CC^{n-1} \mid \exists p \in \mathcal{\sigma}\{2,\dots,n\} , (z_1,z_{p(2)},\dots,z_{p(n)})\in \Delta_n \} .$$
\begin{figure}[h!]
	\centering
	\includegraphics[scale=1.3]{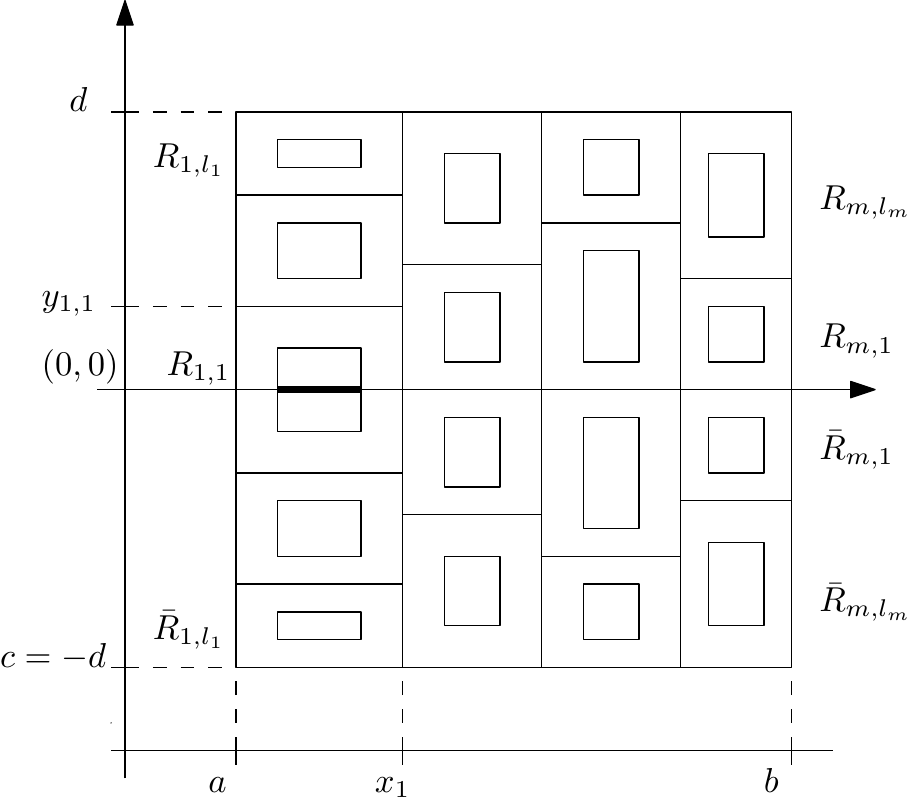}
	\caption{\label{image} Division of the support in rectangles, $n$ odd. Here $n=17$ we divide the support in 4 columns. The thick line corresponds to the first rectangle, which is flat.}
\end{figure}
The rest of the inequalities are valid for this construction and it allows us to end the proof of the lower bound in the real case.

\subsubsection{Large deviation principle for normalized measures.} \label{getridofZ}
In this subsection we obtain the full large deviation principle on the sphere by treating the normalizing constants. This technique is classical in large deviations for Coulomb gases. Unfortunately, the real case is less direct than the complex case and requires some control over the $Z_{n,k}$ constants.
Here we have an explicit formula for the constants $Z_n$ and $Z_{n,k}$ in Theorem \ref{the:rootsexpo}. 

As $\lim_{n\to\infty}\frac{1}{\beta_n} \log Z_n = 0$, the large deviation principle holds for the normalized measures.
For any open $O$ set in $\mathcal{M}_1(\mathcal{S}^2)$, we have:
\begin{equation}
-\inf_{O} I_{\mathcal{S}^2} \leq \varliminf_{n\to\infty} \frac{1}{\beta_n} \log Z_n \PP(\bar{\mu}_n \in O) \leq \varlimsup_{n\to\infty}  \frac{1}{\beta_n} \log Z_n \PP(\bar{\mu}_n \in O) \leq -\inf_{ \mathrm{clo}(O)} I_{\mathcal{S}^2}.
\end{equation} 
By taking $O=\mathcal{M}_1(\mathcal{S}^2)$ we get: 
\begin{equation}\label{trick}
\lim_{n \rightarrow \infty }\frac{1}{\beta_n} \log Z_n = -\inf_{\nu \in \mathcal{M}_1(\mathcal{S}^2)} I_{\mathcal{S}^2}(\nu)=0.
\end{equation}
In this case, we know that the rate function $I$ reaches its minimum for the uniform measure on the equator as we know that the random sequence $\mu_n$ converges almost surely weakly towards the uniform measure on the unit circle. As $I_{\mathcal{S}^2}(T^*\nu_S)=I(\nu_S)=0$ it will imply the (already known) almost sure weak convergence of $(\mu_n)_{n \in \NN}$ towards $\nu_S$. This ends the proof of Proposition \ref{thmcomplexe}.

In the real case, we have to check that 
\begin{equation} \label{assumption0}
\lim_{n \rightarrow \infty}\min_k Z_{n,k}=\lim_{n \rightarrow \infty}\max_k Z_{n,k} = - \inf \tilde{I}_{\mathcal{S}^2}=0.
\end{equation} This is true when $\beta_n=n^2$ (see Proposition \ref{limits}). For general $\beta_n$, we have to assume that \eqref{assumption0} is true.
Then for any set $A\in \mathcal{M}_1(\mathcal{S}2)$ we have:
\begin{equation}\label{trick2}
\varlimsup_{n\to\infty} \frac{1}{\beta_n} \log \sum_{k=0}^{\lfloor n/2 \rfloor} \frac{1}{Z_{n,k}} \PP_{n,k}(A) \leq \varlimsup_{n\to\infty} -\frac{1}{\beta_n}\log \min_k Z_{n,k} - \inf_{\mathrm{clo}A} I_{\mathcal{S}^2}
\end{equation}
and 
\begin{equation}
\varliminf_{n\to\infty}\frac{1}{\beta_n} \log \sum_{k=0}^{\lfloor n/2 \rfloor} \frac{1}{Z_{n,k}} \PP_{n,k}(A) \geq \varliminf_{n\to\infty} -\frac{1}{\beta_n} \log \max _k Z_{n,k} - \inf_{\mathrm{int}(A)}I_{\mathcal{S}^2}
\end{equation}
which proves the full large deviation principle in $\mathcal{M}_1(\mathcal{S}^2)$.

\subsection{Step 4: Going back on the plane}
We have proved large deviation principles for the real and complex case on the sphere. The next proposition is taken from \cite[Lemma 2.1]{hardy}. We recall that the point $N=(0,0,1)$ is the north pole of the sphere.

\begin{proposition}[Correspondence between $\CC$ and $\mathcal{S}^2 \setminus \{N\}$]\label{homeo}
	$T^*$ is an homeomorphism from $\mathcal{M}_1(\CC)$ to $\{ \mu \in \mathcal{M}_1(\mathcal{S}^2) \mid \mu(\{N\})=0\}$.
\end{proposition}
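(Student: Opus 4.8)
The plan is to verify the four properties that make $T^*$ a homeomorphism onto the prescribed subset, relying on the fact (recalled in Definition~\ref{stereo}) that $T$ is itself a homeomorphism from $\CC$ onto $\mathcal{S}^2\setminus\{N\}$, whose continuous inverse is the stereographic projection from $N$, which I denote $T^{-1}$.

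First I would dispose of the purely set-theoretic points. No point of $\CC$ is sent by $T$ to the north pole, so the preimage $T^{-1}(\{N\})$ is empty and hence $(T^*\mu)(\{N\})=\mu\big(T^{-1}(\{N\})\big)=0$ for every $\mu\in\mathcal{M}_1(\CC)$; thus $T^*$ indeed takes values in $\{\mu\in\mathcal{M}_1(\mathcal{S}^2):\mu(\{N\})=0\}$. Injectivity follows from injectivity of $T$: if $T^*\mu=T^*\nu$, push forward by $T^{-1}$ and use $T^{-1}\circ T=\mathrm{id}_{\CC}$. For surjectivity, any $\mu$ in the subset is carried by $\mathcal{S}^2\setminus\{N\}$, so $(T^{-1})^*\mu$ is a well-defined element of $\mathcal{M}_1(\CC)$ and $T^*\big((T^{-1})^*\mu\big)=\mu$ since $T\circ T^{-1}=\mathrm{id}$ on $\mathcal{S}^2\setminus\{N\}$; hence $T^*$ is a bijection onto the subset with inverse $(T^{-1})^*$. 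Continuity of $T^*$ is then immediate: for $f\in C_b(\mathcal{S}^2)$ the function $f\circ T$ lies in $C_b(\CC)$, so $\mu_n\to\mu$ weakly in $\mathcal{M}_1(\CC)$ gives $\int f\,d(T^*\mu_n)=\int f\circ T\,d\mu_n\to\int f\circ T\,d\mu=\int f\,d(T^*\mu)$.

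The only genuinely delicate step — and the one I expect to be the main obstacle — is continuity of the inverse $(T^{-1})^*$ on the subset, because for $g\in C_b(\CC)$ the lift $g\circ T^{-1}$ is continuous and bounded on $\mathcal{S}^2\setminus\{N\}$ but in general does not extend continuously to $N$, so one cannot test weak convergence against it directly. I would resolve this with the portmanteau theorem for bounded measurable functions: extend $g\circ T^{-1}$ to a bounded measurable function $h$ on $\mathcal{S}^2$ by setting $h(N):=0$, so that $h$ is continuous at every point of the open set $\mathcal{S}^2\setminus\{N\}$ and its discontinuity set is contained in $\{N\}$. If $(\mu_n)$ and $\mu$ lie in the subset and $\mu_n\to\mu$ weakly in $\mathcal{M}_1(\mathcal{S}^2)$, then $\mu$ gives zero mass to the discontinuity set of $h$, whence $\int h\,d\mu_n\to\int h\,d\mu$; and since $\mu_n(\{N\})=\mu(\{N\})=0$ the value assigned to $h$ at $N$ is irrelevant, so $\int g\,d\big((T^{-1})^*\mu_n\big)=\int h\,d\mu_n\to\int h\,d\mu=\int g\,d\big((T^{-1})^*\mu\big)$. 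This shows $(T^{-1})^*\mu_n\to(T^{-1})^*\mu$ weakly in $\mathcal{M}_1(\CC)$ and completes the proof; everything besides this portmanteau argument is routine.
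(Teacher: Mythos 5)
Your proof is correct, but the crux — continuity of $(T^{-1})^*$ on the subset — is handled by a different device than the paper's. You extend $g\circ T^{-1}$ by an arbitrary value at $N$ to get a bounded function $h$ whose discontinuity set is contained in $\{N\}$, and then invoke the version of the portmanteau theorem for bounded functions that are continuous $\mu$-almost everywhere; since the target measures all vanish at $N$, the extension value is irrelevant and weak convergence of the pushforwards follows in one stroke. The paper instead proves tightness of $((T^{-1})^*\nu_n)_n$: by outer regularity of the limit and the closed-set portmanteau inequality one finds a closed neighbourhood $B$ of $N$ with $\varlimsup_n \nu_n(B)\le\nu_\infty(B)\le\varepsilon$, so the pushforwards put mass at most $\varepsilon$ outside the compact set $T^{-1}(\mathcal{S}^2\setminus\mathrm{int}\,B)$; one then observes vague convergence (testing against $C_c(\CC)$, whose lifts extend continuously by $0$ to $N$) and upgrades vague-plus-tight to weak. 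Your route is shorter and avoids introducing tightness, at the cost of citing the slightly less elementary a.e.-continuous form of portmanteau; the paper's route uses only the closed-set inequality and the standard tight-plus-vague criterion, which some readers may prefer as more self-contained. Both are fully rigorous, and the preliminary bookkeeping (range of $T^*$, bijectivity, continuity of $T^*$ itself) is identical in substance.
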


\begin{proof}
As $T$ is a continuous function, $\mu \mapsto T^*\mu$ is  continuous for the weak topology. As $T$ is a bijection from $\CC$ to $\mathcal{S}^2 \setminus \{N\}$, it follows that $T^*$ is a bijection with inverse $(T^{-1})^*$. We only have to prove the continuity of $(T^{-1})^*$. Let $(\nu_n)_{n\in \NN}$ be a sequence of measures in $\{ \nu \in \mathcal{M}_1(\mathcal{S}^2) \mid \nu(\{N\})=0\}$ that converges in $\{ \nu \in \mathcal{M}_1(\mathcal{S}^2) \mid \nu(\{N\})=0\}$. Let $\nu_{\infty}$ the limit of this sequence. By outer regularity of $\nu_{\infty}$ and the Portmanteau theorem, for any $\varepsilon$, there is an open set $B$ such that:
$$ \varlimsup_{n\to\infty} \mu_n (B) \leq \mu (B) \leq \varepsilon. $$
The last inequality shows that the sequence $((T^{-1})^* \nu_n)_n$ is tight. It is easy to see that $((T^{-1})^* \mu_n)_n$ converges vaguely towards $(T^{-1})^* \nu_{\infty}$ hence weakly.
\end{proof}

\begin{proposition}[Rate functions] \label{equivalence}
	For any measure $\mu \in \mathcal{M}_1(\CC)$ , $I_{\mathcal{S}^2}(T^*\mu)= I(\mu)$.
\end{proposition}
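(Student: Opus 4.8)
The plan is to prove the identity $I_{\mathcal{S}^2}(T^*\mu) = I(\mu)$ by transporting each of the two pieces of the rate function — the energy term $\mathcal{E}$ and the supremum term $J$ — through the inverse stereographic projection, using the change-of-variable relations \eqref{relations} and \eqref{relation2}. Recall that
\[
I(\mu) = -\iint\Big(\log|z-w| - \tfrac12\log(1+|z|^2) - \tfrac12\log(1+|w|^2)\Big)d\mu(z)d\mu(w) + \sup_{z\in S}\int\big(\log|z-w|^2 - \log(1+|w|^2)\big)d\mu(w),
\]
while $I_{\mathcal{S}^2}(\nu) = \mathcal{E}_{\mathcal{S}^2}(\nu) + J_{\mathcal{S}^2}(\nu)$ with $J_{\mathcal{S}^2}(\nu) = \sup_{x\in T(S)}\int\log|x-y|^2d\nu(y) + \log 2$. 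So the goal is really two separate computations plus a bookkeeping step to match constants (the $\log 2$, and the factor of $2^n$ that appears on the sphere because $|T(z)|^2 = 1/2$ on the equator).

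First I would handle the energy term. Using \eqref{relations}, for $x = T(z)$, $y = T(w)$,
\[
\log|x-y|^2 = \log|z-w|^2 + \log(1-|T(z)|^2) + \log(1-|T(w)|^2) = 2\log|z-w| - \log(1+|z|^2) - \log(1+|w|^2),
\]
where the last equality uses \eqref{relation2}. Integrating against $d(T^*\mu)(x)d(T^*\mu)(y) = d\mu(z)d\mu(w)$ and multiplying by $-\tfrac12$ (careful: $\mathcal{E}_{\mathcal{S}^2}$ has $-\iint\log|x-y|$, not $\log|x-y|^2$) gives
\[
\mathcal{E}_{\mathcal{S}^2}(T^*\mu) = -\iint\Big(\log|z-w| - \tfrac12\log(1+|z|^2) - \tfrac12\log(1+|w|^2)\Big)d\mu(z)d\mu(w),
\]
which is exactly the double-integral part of $I(\mu)$. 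One must be slightly careful that this manipulation is legitimate when the integrals are infinite: since the integrand $-\log|z-w| + \tfrac12\log(1+|z|^2) + \tfrac12\log(1+|w|^2)$ is bounded below (it equals $-\log|x-y|$ on a compact sphere, hence $\geq -\log(\mathrm{diam}\,\mathcal{S}^2)$), both sides are well-defined in $(-\infty,+\infty]$ and the identity holds in that extended sense.

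Next the supremum term. For $x = T(z)$ with $z \in S$ (so $x$ ranges over the equator $T(S)$ and $|T(z)|^2 = 1/2$), again by \eqref{relations} and \eqref{relation2},
\[
\log|x-y|^2 = \log|z-w|^2 + \log(1-|T(z)|^2) + \log(1-|T(w)|^2) = \log|z-w|^2 - \log 2 - \log(1+|w|^2),
\]
using $1 - |T(z)|^2 = 1/2$ on the equator. Integrating against $d(T^*\mu)(y) = d\mu(w)$ and adding $\log 2$:
\[
\int\log|x-y|^2 d(T^*\mu)(y) + \log 2 = \int\big(\log|z-w|^2 - \log(1+|w|^2)\big)d\mu(w).
\]
Taking the supremum over $x \in T(S)$ on the left (equivalently over $z \in S$ on the right, since $T$ is a bijection $S \to T(S)$) yields $J_{\mathcal{S}^2}(T^*\mu) = \sup_{z\in S}\int(\log|z-w|^2 - \log(1+|w|^2))d\mu(w)$, the second piece of $I(\mu)$. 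Adding the two identities gives $I_{\mathcal{S}^2}(T^*\mu) = I(\mu)$.

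The main obstacle is not conceptual but a matter of care: one must make sure all the logarithmic rearrangements are valid even when $\int\log(1+|w|^2)d\mu(w) = +\infty$ or when $\mathcal{E}_{\mathcal{S}^2}(T^*\mu) = +\infty$, so that no $\infty - \infty$ arises. This is handled as noted above by observing that in every splitting the kernels involved ($-\log|x-y|$ on the sphere, and the "regularized" kernel $-\log|z-w| + \tfrac12\log(1+|z|^2) + \tfrac12\log(1+|w|^2)$ on $\CC$) are bounded below, so each term is a well-defined element of $(-\infty,+\infty]$ and the additions are unambiguous; the simplified ("finite-energy") forms of $I$ and $I_E$ stated in the theorems are recovered precisely when the extra term $\int\log(1+|w|^2)d\mu(w)$ is finite, in which case one may split the integrals freely. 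I would also remark that the same computation, verbatim, gives $I_{\mathcal{S}^2}(T^*\mu) = I_E(\mu)$ in the elliptic case once the supremum is taken over all of $\CC$ rather than $S$, and gives the orthogonal analogue with the extra $-\phi(z)$ term carried along unchanged.
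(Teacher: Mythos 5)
Your proof is correct and follows essentially the same route as the paper: use relations \eqref{relations} and \eqref{relation2} to transport the energy term and the supremum term separately through $T$, with the $\log 2$ arising from $1-|T(z)|^2 = 1/2$ on the equator. You are slightly more explicit than the paper about why the manipulations are safe when the integrals are infinite (the sphere kernel is bounded below, so no $\infty - \infty$ occurs), which is a worthwhile clarification but not a different argument.
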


\begin{proof}[Proof of Proposition \ref{equivalence}]
\begin{multline*}
-\iint \log|z-w|d\mu(z)d\mu(w) + 2 \sup_{z\in S^1} \int \log |z-w|d\mu(w)  \\  
\qquad= -\iint \left(\log|T(z)-T(w)|-\frac{1}{2}(\log(1-|T(z)|^2)+ \log(1-|T(w)|^2) \right) d\mu(z)d\mu(w) \\
\qquad + 2 \sup_{z\in S^1} \int \left( \log|T(z)-T(w)| -\frac{1}{2}(\log(1-|T(z)|^2)+ \log(1-|T(w)|^2) \right) d\mu(w).
\end{multline*}
Hence we obtain:
\begin{align*}
I(\mu)  & = -\iint \log|T(z)-T(w)|d\mu(z)d\mu(w) +2 \sup_{z\in S}\int \log |T(z)-T(w)|d\mu(w) +\log 2 \\
& = I_{\mathcal{S}^2}(T^*\mu).
\end{align*}
\end{proof}
From the large deviation principles proved on the sphere, we can now deduce  the large deviation principles on the plane for the complex case.

\begin{proof}[Proof of Theorem \ref{LDPKac}]
	Thanks to the inclusion principle \cite[lemma 4.1.5]{dembozeitouni}, the random sequence $(T^*\mu_n)_{n\in \NN}$ satisfies a large deviation principle in $$\{ \mu \in \mathcal{M}_1(\mathcal{S}^2) \mid \mu(\{N\})=0\}$$ with speed $\beta_n$ and good rate function $I_{\mathcal{S}^2}$. Then by  the contraction principle \cite[Theorem 4.2.1]{dembozeitouni} along $(T^{-1})^*$, the sequence $(\mu_n)_{n \in \NN}$ satisfies a large deviation principle with the same speed and good rate function $I$ thanks to Proposition \ref{equivalence}. The function $I$ is a good rate function as we have already proved that $I_{\mathcal{S}^2}$ is a good rate function.
\end{proof}

\begin{proof}[Proof of Theorem \ref{LDPKacreel}]
	In the real case, the proof is exactly the same. We use the inclusion principle and the contraction principle to obtain a large deviation principle with speed $\beta_n$ and good rate function $\tilde{I}$ (using Proposition \ref{equivalence} again).
\end{proof}

\section{Large deviations for Elliptic polynomials}

In the last section we saw a large deviation principle for the empirical measures of zeros of random Kac polynomials. In this section, we study the gases \eqref{ellipticcomplexe} and \eqref{elliptiquereel}. We prove Theorems \ref{LDPelliptique} and \ref{LDPelliptiquereel} following the same steps as previously.

\subsection{Step 1: Distribution of the roots.}
\begin{theorem}[Distribution of the roots of elliptic polynomials] \label{loielliptique}
	The family of polynomials $\sqrt{n+1}\binom{n}{k}^{1/2}X^k$ are an orthonormal basis in $\CC_n[X]$ for the scalar product:
	$$ \langle P,Q \rangle = \int P(z)\overline{Q(z)} \frac{1}{ (1+|z|^2)^{n}} \frac{d\ell_{\CC}(z)}{\pi(1+|z|^2)^2}.$$
In the complex case the distribution of the roots of $ P_n= \sum_{k=1}^{n} \binom{n}{k}^{1/2} a_k X^k$ is given by:
$$\frac{1}{Z_n}	\frac{\prod_{i<j} |z_i-z_j|^2}{\left(\int \prod_{k=1}^{n}|z-z_i|^2\frac{1}{ (1+|z|^2)^{n}} \frac{d\ell_{\CC}(z)}{\pi(1+|z|^2)^2} \right)^{n+1}}d\ell_{\CC^n}(z_1,\dots,z_n)$$
where $$Z_n = \frac{\pi^{n-1}}{n! |A_n|^2}$$
is a normalizing constant and where $|A_n|^2$ is the Jacobian of the change of variables from the canonical basis of $\CC_n[X]$ to the orthonormal basis $(R_0,\dots,R_n)$. This distribution can be written:
$$\frac{1}{Z_n}\exp \left( -\beta_n H_E(z_1,\dots,z_n)\right)d\ell_{\CC^n}(z_1,\dots,z_n).$$
In the real case, the distribution of the roots is given by:
\begin{equation*}
\sum_{k=0}^{\lfloor n/2 \rfloor} \frac{1}{Z_{n,k}} \frac{\prod_{i<j} |z_i-z_j|}{\left(\int \prod_{k=1}^{n}|z-z_i|^2\frac{1}{ (1+|z|^2)^{n}} \frac{d\ell_{\CC}(z)}{\pi(1+|z|^2)^2} \right)^{(n+1)/2}} d\ell_{n,k}(z_1,\dots,z_n)
\end{equation*}
where \begin{equation*} Z_{n,k}=\frac{k!(n-2k)!\pi^{\frac{n+1}{2}}}{2^k \Gamma(\frac{n+1}{2}) |A_n|}
\end{equation*}
and $|A_n|$ is the Jacobian of the changes of variables from the canonical basis of $\RR_n[X]$ to the orthonormal basis $(R_0, \dots, R_n)$. This distribution can also be written
\begin{equation*}
\sum_{k=0}^{\lfloor n/2 \rfloor} \frac{1}{Z_{n,k}}  \exp\left( -\beta_n\frac{1}{2}H_E(z_1,\dots,z_n)\right)  d\ell_{n,k}(z_1,\dots,z_n).
\end{equation*}
\end{theorem}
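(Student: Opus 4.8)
The plan is to prove the three assertions of the statement in turn --- orthonormality of the family $\sqrt{n+1}\binom{n}{k}^{1/2}X^k$, the density of the roots in the complex case, and the density in the real case --- following the template of the Kac computation, the only genuinely new ingredient being the orthonormality step. For orthonormality I would evaluate $\langle X^j, X^k\rangle = \int z^j\bar z^k\,\frac{d\ell_{\CC}(z)}{\pi(1+|z|^2)^{n+2}}$ in polar coordinates $z=re^{i\theta}$: the angular integral $\int_0^{2\pi}e^{i(j-k)\theta}\,d\theta$ forces $j=k$, so the family is orthogonal, and for $j=k$ the substitution $u=r^2$ reduces the radial part to a Beta integral, $\int_0^\infty u^k(1+u)^{-(n+2)}\,du = B(k+1,n+1-k) = \frac{k!\,(n-k)!}{(n+1)!} = \frac{1}{(n+1)\binom{n}{k}}$. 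Hence $\|X^k\|^2 = \frac{1}{(n+1)\binom{n}{k}}$ and the stated prefactor makes each $R_k$ a unit vector.

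For the complex case, since multiplying a polynomial by a constant does not move its zeros, I would work with $\tilde P_n = \sum_{k=0}^n a_k R_k$, which has the same zeros as $P_n$. Orthonormality of the $R_k$ gives the analogue of \eqref{eq:trick}, namely $\sum_{k=0}^n|a_k|^2 = \int |\tilde P_n(z)|^2\,\frac{d\ell_{\CC}(z)}{\pi(1+|z|^2)^{n+2}}$; writing $\tilde P_n = c\prod_i(X-z_i)$ with $c$ its leading monomial coefficient, the right-hand side equals $|c|^2\int \prod_i|z-z_i|^2\,\frac{d\ell_{\CC}(z)}{\pi(1+|z|^2)^{n+2}}$. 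Exactly as for Kac polynomials, the change of variables $(a_0,\dots,a_n)\mapsto (z_1,\dots,z_n,a_n)$ factors through the linear change from $R_k$-coordinates to monomial coordinates, whose Jacobian is $|A_n|^2$, and the map from monomial coordinates to $(z_1,\dots,z_n,c)$, whose Jacobian is $|c|^{2n}\prod_{i<j}|z_i-z_j|^2$ by \cite[Lemma 1.1.1]{houghtkris}. Substituting into the Gaussian density $\pi^{-(n+1)}e^{-\sum|a_k|^2}$ and integrating out the remaining variable via $\int_{\CC}|c|^{2n}e^{-s|c|^2}\,d\ell_{\CC}(c) = \pi\,n!\,s^{-(n+1)}$, with $s = \int \prod_i|z-z_i|^2\,\frac{d\ell_{\CC}(z)}{\pi(1+|z|^2)^{n+2}}$, yields both the claimed density and the value of $Z_n$.

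For the real case the coefficients are real Gaussians, so I would instead invoke Zaporozhets' formula \cite{zaporozhets} for the joint law of the roots of a real Gaussian polynomial, as in the proof of Theorem \ref{the:rootsexpo}; it produces the mixture over $k\in\{0,\dots,\lfloor n/2\rfloor\}$, with the Vandermonde factor to the first power and the exponent halved. The same orthonormality identity and the same change of basis --- now with Jacobian $|A_n|$, since we are over $\RR^{n+1}$ rather than $\CC^{n+1}$ --- followed by integration of the leading coefficient, give the stated density and $Z_{n,k} = \frac{k!(n-2k)!\pi^{(n+1)/2}}{2^k\Gamma(\frac{n+1}{2})|A_n|}$.

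The delicate point, and where I would be most careful, is purely the bookkeeping of the multiplicative constants: the factor $|A_n|^2$ (resp. $|A_n|$) from the change of basis, the powers of $n+1$ produced by the normalization $\sqrt{n+1}\binom{n}{k}^{1/2}$, the factors $\pi\,n!$ (resp. $\Gamma(\frac{n+1}{2})$, $2^k$, $k!$, $(n-2k)!$) coming from the Gaussian integral in the leading coefficient and from Zaporozhets' combinatorial weights, and the $1/\pi$ carried by the reference measure $\frac{d\ell_{\CC}(z)}{\pi(1+|z|^2)^2}$, all of which must collapse to exactly the announced $Z_n$ and $Z_{n,k}$. No new idea beyond the Kac computation is required.
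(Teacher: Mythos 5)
Your proposal is correct and follows essentially the same route as the paper: the same decomposition of the change of variables through the monomial basis with the Jacobian factoring as $|A_n|^2\cdot|c|^{2n}\prod_{i<j}|z_i-z_j|^2$, the same orthonormality identity $\sum|a_k|^2=\int|\tilde P_n|^2\,d\nu$, and the same appeal to Zaporozhets' formula for the real-coefficient mixture. The only cosmetic difference is that you compute $\|X^k\|^2$ directly via the Beta integral $B(k+1,n+1-k)$ whereas the paper reaches the same value $\frac{1}{(n+1)\binom{n}{k}}$ by an integration-by-parts recursion; both are trivial variations on the same calculation.
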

These polynomials are handled by the article of Zeitouni and Zelditch \cite{zeitounizelditch} with the complex Cauchy (Fubini-Study) measure and the weight $\phi(z)= \log(1+|z|^2)$.

\begin{proof}[Proof of Theorem \ref{loielliptique}]
First we prove that the polynomials $\sqrt{n+1}\binom{n}{k}^{1/2}X^k$ are an orthonormal basis in $\CC_n[X]$. As the weight and the measure are radial, it is clear that this family is orthogonal. We only have to compute the norm of each polynomial:
\begin{align*}
\int_{\CC} \frac{|z|^{2k}}{(1+|z|^2)^{n+2}}d\ell_{\CC}(z) =&  2\pi \int_{\RR^+} \frac{r^{2k+1}}{(1+r^2)^{n+2}}dr \\
= & \pi \int_{\RR^+} \frac{u^{k}}{(1+u)^{n+2}}du \\
= & \pi \frac{k}{n+1} \int_{\RR^+} \frac{u^{k-1}}{(1+u)^{n+1}} \\
= & \pi (n+1) \binom{n}{k}^{-1}.
\end{align*}
The computation of the distribution of the roots of random elliptic polynomials with complex coefficients is a change of variables. 

Let $(z_1,\dots,z_n)$ be the zeros of $P_n= \sum_{k=0}^{n} a_k \binom{n}{k}^{1/2}X^k$, then we consider $$G(z_1,\dots,z_n,a_n)= (a_0, \dots,a_n).$$
To compute the Jacobian determinant of $G$, we use the following decomposition:

\[\xymatrix{ (z_1,\dots,z_n,a_n) \ar[rr]^{G} \ar[rd]_U && (a_0,\dots,a_n)  \\ & (b_0,\dots,b_n) \ar[ru]_V }\]
where $U$ is the function giving the coefficients in the canonical basis of $P_n$ from its roots and leading coefficient and $V$ is the change of basis from the canonical basis to the basis $\binom{n}{k}^{1/2}X^k$. We have already seen that $\text{Jac}(U)=|a_n|^{2n} \prod_{i<j}|z_i-z_j|^2$. We could compute the Jacobian determinant of $V$, but we will just call this quantity $|A_n|^2$.Hence, the real Jacobian determinant of $G$ is:
\begin{equation*}
	|\mathrm{Jac}(G)|^2= |A_n|^2 |a_n|^{2n} \prod_{i<j}|z_i-z_j|^2.
\end{equation*}

The end of the proof is the same as for Kac polynomials, the density of the random vector $(a_0, \dots,a_n)$ being $\frac{e^{-\|P\|^2}}{\pi^{n+1}}$, we only have to integrate the distribution of $(z_1,\dots,z_n,a_n)$ to obtain the announced distribution for the complex case.

In the real case, we use Zaporozhets' computation \cite{zaporozhets} to obtain the distribution of $(z_0,\dots,z_n,a_n)$ in function of the distribution of the coefficients in the canonical basis $(b_0,\dots,b_n)$ and we use the additional change of variables from the canonical basis to the basis $\sqrt{n+1}\binom{n}{k}^{1/2}X^k$. The real Jacobian determinant of this change of variables is $|A_n|$, so we obtain the distribution of $(z_1,\dots,z_n,a_n)$:
\begin{align*}
\sum_{k=0}^{\lfloor n/2 \rfloor} & \frac{|A_n|2^k}{k!(n-2k)!\pi^{\frac{n+1}{2}}}  |a_n|^n \prod_{i<j}|z_i-z_j| \\ & \qquad \times  \exp \left(-\int |a_n|^2 \frac{\prod_{j=1}^{n}|z-z_j|^2}{(1+|z|^2)^n}\frac{d\ell_{\CC}(z)}{\pi (1+|z|^2)^2}\right) d\ell_{n,k}(z_1,\dots,z_n) d\ell_{\CC}(a_n).
\end{align*}
and we integrate with respect the variable $a_n$.

\end{proof}

\subsection{Step 2: Large deviations in \texorpdfstring{$\mathcal{M}_1(\mathcal{S}^2)$}{M1(S2)}}
\begin{proposition}[Pushing elliptic polynomials on the sphere]
	Let $(z_1,\dots,z_n)$ be the zeros of $P_n$ in the complex case, then the law of $(T(z_1),\dots,T(z_n))$ is absolutely continuous with respect to the push forward by $T$ \eqref{stereo} of the Lebesgue measure on $\CC$ with density:
	
	$$\frac{\prod_{i<j} |x_i-x_j|^2}{(\int \prod_{j=1}^n |x-x_j|^2 d\nu_{\mathcal{S}^2}(x))^{n+1}}  \times \prod_{i=1}^{n} (1-|x_i|^2)^2$$
where $\nu_{\mathcal{S}^2}$ is the uniform measure on $\mathcal{S}^2$. Recall that $\kappa_n$ is defined in Proposition \ref{pushcomplexkac}.
This law can be written in the form:
	$$\frac{1}{Z_n}\exp \left(-\beta_n \left[  -\frac{1}{n^2}\sum_{i\neq j} \log|x_i-x_j| + \frac{n+1}{n^2}\log \int \prod_{j=1}^n |x-x_j|^2 d\nu_{\mathcal{S}^2}(x)\right]\right) d\kappa_n. $$

\end{proposition}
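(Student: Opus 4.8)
The statement is the exact analogue of Proposition~\ref{pushcomplexkac} for the weighted scalar product of Theorem~\ref{loielliptique}, and the plan is to run the same substitution $x_i=T(z_i)$ in the density, using only the identities \eqref{relations} and \eqref{relation2}. The one new feature compared with the Kac case is that the weight $\phi(z)=\log(1+|z|^2)$ is tailored to the geometry: by \eqref{relation2} one has $(1+|z|^2)^{-n}=(1-|T(z)|^2)^{n}$, and this factor exactly cancels the factor $(1-|T(z)|^2)^{-n}$ that \eqref{relations} produces in the denominator integral. Hence, unlike in the Kac case (where $|T(z)|^2\equiv 1/2$ on the unit circle left behind the constant $2^n$), no residual constant survives here, and the reference measure $T^*\nu_S$ on the equator is replaced by the uniform probability measure $\nu_{\mathcal{S}^2}$ on the whole sphere.

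Concretely, I would proceed in three steps. First, in the numerator, rewrite $\prod_{i<j}|z_i-z_j|^2$ by \eqref{relations}; since each $(1-|T(z_i)|^2)$ occurs in exactly $n-1$ of the pairs, this gives
\[
\prod_{i<j}|z_i-z_j|^2=\Big(\prod_{i<j}|T(z_i)-T(z_j)|^2\Big)\prod_{i=1}^n\big(1-|T(z_i)|^2\big)^{-(n-1)}.
\]
Second, in the denominator integral, combine \eqref{relations} with $(1+|z|^2)^{-n}=(1-|T(z)|^2)^{n}$ to obtain the cancellation
\[
\prod_{k=1}^n|z-z_k|^2\,\frac{1}{(1+|z|^2)^n}=\frac{\prod_{k=1}^n|T(z)-T(z_k)|^2}{\prod_{k=1}^n\big(1-|T(z_k)|^2\big)},
\]
and then push the complex Cauchy probability measure $\frac{d\ell_{\CC}(z)}{\pi(1+|z|^2)^2}$ forward by $T$; as noted in the remark following Proposition~\ref{pushcomplexkac}, this pushforward is exactly the uniform probability measure $\nu_{\mathcal{S}^2}$. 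Thus the denominator integral equals $\prod_{k=1}^n(1-|T(z_k)|^2)^{-1}\int\prod_{k=1}^n|x-T(z_k)|^2\,d\nu_{\mathcal{S}^2}(x)$. Raising this to the power $n+1$ and dividing the numerator by it, the powers of $(1-|T(z_i)|^2)$ combine as $-(n-1)+(n+1)=2$, which yields, after the substitution $x_i=T(z_i)$, the announced density
\[
\frac{\prod_{i<j}|x_i-x_j|^2}{\big(\int\prod_{j=1}^n|x-x_j|^2\,d\nu_{\mathcal{S}^2}(x)\big)^{n+1}}\ \prod_{i=1}^n(1-|x_i|^2)^2
\]
with respect to $T^*\ell_{\CC^n}=dL_\CC^n$.

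Finally, to put the law in Gibbs form, note that the factor $\prod_{i=1}^n(1-|x_i|^2)^2$ absorbed into $dL_\CC^n$ is exactly $d\kappa_n$ (Proposition~\ref{pushcomplexkac}), while for the remaining ratio one writes $\prod_{i<j}|x_i-x_j|^2=\exp(-n^2\mathcal{E}_{\neq}(\bar\mu_n))$ and $\big(\int\cdots\big)^{n+1}=\exp\big(n^2\cdot\frac{n+1}{n^2}\log\int\cdots\big)$, so that with $\beta_n=n^2$ the density becomes $\frac{1}{Z_n}\exp\big(-\beta_n[\,-\frac{1}{n^2}\sum_{i\neq j}\log|x_i-x_j|+\frac{n+1}{n^2}\log\int\prod_j|x-x_j|^2\,d\nu_{\mathcal{S}^2}(x)\,]\big)\,d\kappa_n$; the constant $Z_n$ is unchanged, as it came only from the integration in $a_n$ in Theorem~\ref{loielliptique}. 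There is no real obstacle here: the only point requiring care is keeping track of the exponents of $(1-|T(z_i)|^2)$ and of the normalization of $\nu_{\mathcal{S}^2}$, everything else being the same bookkeeping as in Proposition~\ref{pushcomplexkac}.
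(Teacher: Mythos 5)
Your proposal is correct and follows essentially the same route as the paper: apply the identities \eqref{relations} and \eqref{relation2} to the numerator and to the weighted denominator integral, observe that the pushforward of the Cauchy measure is $\nu_{\mathcal{S}^2}$, and keep careful track of the exponents of $(1-|T(z_i)|^2)$, which combine to give the factor $\prod_i(1-|x_i|^2)^2$ defining $\kappa_n$.
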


\begin{proof}
The proof is nearly the same as the proof of Proposition \ref{pushcomplexkac}. We use the relations \eqref{relations} and \eqref{relation2}:
 \begin{equation*}
\prod_{i<j}|z_i-z_j|^2=\prod_{i<j} \frac{|T(z_i)-T(z_j)|^2}{(1-|T(z_i)|^2)(1-|T(z_j)|^2)}  
 \end{equation*}
 and
 \begin{equation*}
\int \prod_{k=1}^{n}|z-z_i|^2\frac{1}{ (1+|z|^2)^{n}} \frac{d\ell_{\CC}(z)}{\pi(1+|z|^2)^2} =\frac{\displaystyle\int\prod_{i=1}^n |T(z)-T(z_i)|^2 \frac{(1-|T(z)|^2)^2d\ell_{\CC}(z)}{\pi} }{\prod_{i=1}^{n}(1-|T(z_i)|^2)}
 \end{equation*}
 to obtain the density:
 \begin{equation*}
\frac{\prod_{i<j} |T(z_i)-T(z_j)|^2}{(\int\prod_{i=1}^n |T(z)-T(z_i)|^2  d\nu_{\mathcal{S}^2})^{n+1}} \prod_{i=1}^{n} (1-|T(z_i)|^2)^2.
 \end{equation*}
\end{proof}

\begin{proposition}[Pushing the real case on the sphere]\label{pushellipticcomplex}
Let $(z_1,\dots,z_n)$ be the zeros of $P_n$ in the real case, then the law of $(T(z_1),\dots,T(z_n))$ is:
$$\sum_{k=0}^{\lfloor n/2 \rfloor} \frac{1}{Z_{n,k}} \frac{ \prod_{i<j}|x_i-x_j|\times \prod_{i=1}^{n} (1-|x_i|^2)}{ (\int\prod_{i=1}^N |x-x_i|^2 d\nu_{\mathcal{S}^2}(x))^{(n+1)/2}}  dL_{n,k}(x_1,\dots,x_n).$$
Where $\rho_{n,k}$ is like in Proposition \ref{pushreelkac}.
This law can be written:
\begin{multline*}
\sum_{k=0}^{\lfloor n/2 \rfloor} \frac{1}{Z_{n,k}}  \exp \left( -\beta_n\left[  \frac{1}{2}	\mathcal{E}_{\neq}(\mu_n)  -\frac{n+1}{2n^2} \log \int\prod_{i=1}^N |x-x_i|^2 d\nu_{\mathcal{S}^2}(x) \right]\right) d\rho_{n,k}.
\end{multline*}
	
\end{proposition}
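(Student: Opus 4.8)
The plan is to repeat verbatim the computation of the complex elliptic case above, which itself copies the proof of Proposition~\ref{pushcomplexkac}. First I would start from the real-case density of the root vector $(z_1,\dots,z_n)$ supplied by Theorem~\ref{loielliptique}, namely the mixture whose $k$-th term is
\[
\frac{1}{Z_{n,k}}\,\frac{\prod_{i<j}|z_i-z_j|}{\left(\int \prod_{i=1}^n|z-z_i|^2\,\dfrac{d\ell_{\CC}(z)}{\pi(1+|z|^2)^{n+2}}\right)^{(n+1)/2}}\,d\ell_{n,k}(z_1,\dots,z_n),
\]
and push it forward by $T$. Since $T^*\ell_{\RR}=L_{\RR}$ and $T^*\ell_{\CC}=L_{\CC}$, we have $T^*\ell_{n,k}=L_{n,k}$, and the relations \eqref{relations}, \eqref{relation2} hold for every pair of complex numbers, real ones included, so the mixed real/complex structure of $\ell_{n,k}$ causes no difficulty.

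Next, with $x_i=T(z_i)$, relation \eqref{relations} gives $\prod_{i<j}|z_i-z_j|=\prod_{i<j}|x_i-x_j|\,\prod_{i=1}^n(1-|x_i|^2)^{-(n-1)/2}$, and the same manipulation as in the complex elliptic case (apply \eqref{relations} and \eqref{relation2}, and cancel the factor $(1-|T(z)|^2)^n$ against $1/(1+|z|^2)^n$) yields
\[
\int \prod_{i=1}^n|z-z_i|^2\,\frac{d\ell_{\CC}(z)}{\pi(1+|z|^2)^{n+2}}=\frac{1}{\prod_{i=1}^n(1-|x_i|^2)}\int\prod_{i=1}^n|x-x_i|^2\,d\nu_{\mathcal{S}^2}(x),
\]
where I use that the complex Cauchy probability measure $\frac{1}{\pi(1+|z|^2)^2}\,d\ell_{\CC}(z)$ pushes forward under $T$ to the (normalized) uniform measure $\nu_{\mathcal{S}^2}$ on $\mathcal{S}^2$.

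Raising the last integral to the power $(n+1)/2$ produces a factor $\prod_{i=1}^n(1-|x_i|^2)^{-(n+1)/2}$, while the Vandermonde contributes $\prod_{i=1}^n(1-|x_i|^2)^{-(n-1)/2}$ in the denominator, so the net power of $(1-|x_i|^2)$ that appears is $(n+1)/2-(n-1)/2=1$, which is exactly the factor $\prod_{i=1}^n(1-|x_i|^2)$ in the announced density. It then remains to absorb $\prod_{i=1}^n(1-|x_i|^2)\,dL_{n,k}$ into $\rho_{n,k}$ and, using $\beta_n=n^2$ together with $\prod_{i<j}|x_i-x_j|=\exp\big(-\tfrac{\beta_n}{2}\,\mathcal{E}_{\neq}(\mu_n)\big)$, to rewrite the resulting density in the stated exponential form.

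The only step that needs attention is the bookkeeping of the half-integer powers of $(1-|x_i|^2)$ coming respectively from the first-power Vandermonde and from the $(n+1)/2$-th power of the integral; once one checks that they collapse to a single power, everything else is the line-by-line repetition of the proof of Proposition~\ref{pushcomplexkac}, so I do not expect any genuine obstacle here.
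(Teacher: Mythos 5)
Your proof is correct and follows the paper's own (one-line) proof, which simply defers to the complex-case computation; you just carry it out explicitly. The bookkeeping is right: the first-power Vandermonde contributes $\prod_i(1-|x_i|^2)^{-(n-1)/2}$, the $(n+1)/2$-th power of the normalizing integral contributes $\prod_i(1-|x_i|^2)^{(n+1)/2}$ (after recognizing $T^*(\frac{d\ell_\CC}{\pi(1+|z|^2)^2})=\nu_{\mathcal{S}^2}$ and cancelling the weight), and the difference of exponents is exactly $1$, giving the factor absorbed into $\rho_{n,k}$. One small remark: when you actually pass to the exponential form you will find the exponent is $-\beta_n\bigl[\tfrac12\mathcal{E}_{\neq}(\bar\mu_n)+\tfrac{n+1}{2n^2}\log\int\prod_i|x-x_i|^2\,d\nu_{\mathcal{S}^2}\bigr]$ with a plus sign in front of the log term (as in Proposition \ref{pushcomplexkac}); the minus sign in the paper's displayed statement of the proposition is a typo, so do not try to match it.
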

The proof of this proposition is the same as Proposition \ref{pushellipticcomplex}.
We can now state theorem on the sphere.

\begin{proposition}[Large deviation principle in $\mathcal{M}_1(\mathcal{S}^2)$]\label{LDPelliptiquesphere}
	In the complex case, the sequence of empirical measures satisfy a large deviation principle with speed $n^2$ in $\mathcal{M}_1(\mathcal{S}^2)$ with good rate function $I_{E,\mathcal{S}^2}-\inf I_{E,\mathcal{S}^2}$ where $$I_{E,\mathcal{S}^2}(\mu)= -\iint \log|x-y|d\mu(x)d\mu(y) + \sup_{x \in \mathcal{S}^2} \int \log|x-y|^2 d\mu(y) .$$
	In the real case, the sequence of empirical measures also satisfies a large deviation principle with speed $n^2$ and good rate function:
$$
	\tilde{I}_{E,\mathcal{S}^2}(\mu)= \begin{cases}
	 \frac{1}{2}(I_{E,\mathcal{S}^2}(\mu)-\inf I_{E,\mathcal{S}^2}) &\text{if $\mu$ is invariant under the map }z \mapsto \bar{z} \\
  \infty &\text{otherwise}.
\end{cases}
	$$
\end{proposition}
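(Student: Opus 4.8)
We sketch the argument; the plan is to run the four-step scheme of Section~3 essentially verbatim. Step~1 is Theorem~\ref{loielliptique} and Step~2 is the two push-forward propositions above, so the only remaining work is Step~3, the large deviation principle on $\mathcal{M}_1(\mathcal{S}^2)$. The single structural change compared with the Kac case is that the reference measure on the sphere is now $\nu_{\mathcal{S}^2}$, the uniform measure on the whole sphere, instead of $T^*\nu_S$, the uniform measure on the equator. Consequently the factors $2^n$ and $\log 2$ of the Kac computation disappear, the confinement term is $\tfrac{n+1}{n^2}\log\int\prod_j|x-x_j|^2\,d\nu_{\mathcal{S}^2}(x)$, and in $J_{E,\mathcal{S}^2}(\mu)=\sup_{x\in\mathcal{S}^2}\int\log|x-y|^2\,d\mu(y)$ the supremum runs over all of $\mathcal{S}^2$ (with no $+\log 2$). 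So it suffices to redo the three analytic ingredients of Step~3 — study of the rate function, the Bernstein--Markov inequality, the lower bound for the pushed-back Coulomb gas — in this modified setting, and then to treat the normalizing constants.

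\textbf{Rate function.} Since $\mathcal{S}^2$ has diameter $1$, $\log|x-y|\le 0$ on $\mathcal{S}^2\times\mathcal{S}^2$, hence $\mathcal{E}_{\mathcal{S}^2}\ge 0$ and $J_{E,\mathcal{S}^2}\le 0$; integrating against $\nu_{\mathcal{S}^2}$, whose logarithmic potential on $\mathcal{S}^2$ is bounded, and using Fubini gives $J_{E,\mathcal{S}^2}(\mu)>-\infty$ for every $\mu$. Upper semicontinuity of $J_{E,\mathcal{S}^2}$ follows from the $\log_M$-truncation argument of Proposition~\ref{ratefunction} word for word. For lower semicontinuity I would repeat the potential-theoretic argument of that same proof, whose only input is that $\mathrm{supp}(\nu_{\mathcal{S}^2})=\mathcal{S}^2$ is non-thin at every point — which is immediate, $\mathcal{S}^2$ being connected with more than one point \cite[Theorem 3.8.3]{ransford1995potential}. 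Thus $J_{E,\mathcal{S}^2}$ is continuous and bounded on the compact $\mathcal{M}_1(\mathcal{S}^2)$, $\mathcal{E}_{\mathcal{S}^2}$ is lower semicontinuous as in Proposition~\ref{ratefunction}, so $I_{E,\mathcal{S}^2}=\mathcal{E}_{\mathcal{S}^2}+J_{E,\mathcal{S}^2}$ is a lower semicontinuous, hence good, rate function, and it is strictly convex via the decomposition $\mathcal{E}_{\mathcal{S}^2}(\mu)=\mathcal{E}((T^{-1})^*\mu)+\int\log(1+|z|^2)\,d(T^{-1})^*\mu$ used before. In particular $\inf I_{E,\mathcal{S}^2}\in\RR$, so the shifted rate function is well defined.

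\textbf{Upper bound.} The pair $(\phi,\nu)$ with $\phi(z)=\log(1+|z|^2)$ and $\nu=\tfrac{d\ell_{\CC}(z)}{\pi(1+|z|^2)^2}$ satisfies the Bernstein--Markov property: expanding $P\in\CC_n[X]$ as $P=\sum_k c_k\sqrt{n+1}\binom{n}{k}^{1/2}X^k$ in the orthonormal basis and using Cauchy--Schwarz and the binomial theorem,
\[
\frac{|P(z)|^2}{(1+|z|^2)^n}\ \le\ \Big(\sum_k|c_k|^2\Big)\,\frac{(n+1)\sum_k\binom{n}{k}|z|^{2k}}{(1+|z|^2)^n}\ =\ (n+1)\,\|P\|_{L^2}^2
\]
for all $z\in\CC$. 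Rewriting this as an integral inequality against $\nu$, as in Lemma~\ref{bernmarko}, and lifting it to the sphere through \eqref{relations}--\eqref{relation2} exactly as Lemma~\ref{bernmarko2} was obtained from Lemma~\ref{bernmarko}, yields: for every $\varepsilon>0$ and all $n$ large,
\[
\tfrac1n\log\int e^{2n\int\log|x-y|\,d\bar\mu_n(y)}\,d\nu_{\mathcal{S}^2}(x)\ \ge\ 2\sup_{x\in\mathcal{S}^2}\int\log|x-y|\,d\bar\mu_n(y)-\varepsilon .
\]
With this inequality, the large deviations upper bound for the non-normalized measures — in the complex case and uniformly in $k$ in the real case — is the computation that proved Propositions~\ref{thmcomplexe} and~\ref{thmreel}, with $\nu_{\mathcal{S}^2}$ in place of $T^*\nu_S$ and $J_{E,\mathcal{S}^2}$ in place of $J_{\mathcal{S}^2}$.

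\textbf{Lower bound and constants.} I would use $\tfrac1n\log\int\prod_j|x-x_j|^2\,d\nu_{\mathcal{S}^2}(x)\le J_{E,\mathcal{S}^2}(\bar\mu_n)$ together with the continuity of $J_{E,\mathcal{S}^2}$ to peel off the confinement, and then push the remaining factor $\exp(-\beta_n\mathcal{E}_{\neq}(\bar\mu_n))\,d\kappa_n$ (resp.\ $\exp(-\beta_n\tfrac12\mathcal{E}_{\neq}(\bar\mu_n))\,d\rho_{n,\lfloor n/2\rfloor}$) back to $\CC$ via $T^{-1}$; since $\mathcal{E}_{\neq}$, $\kappa_n$ and $\rho_{n,k}$ are literally the same objects as in the Kac section, this produces exactly the weakly confining Coulomb gas in $\CC$ with potential $\log(1+|z|^2)$ and inverse temperature $\beta_n$ already treated there — by the rectangle construction in the complex case and its conjugation-symmetric variant in the real case — so the lower bounds for the non-normalized measures follow verbatim. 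Finally, taking $A=\mathcal{M}_1(\mathcal{S}^2)$ in the non-normalized bounds forces $\tfrac1{\beta_n}\log Z_n\to -\inf I_{E,\mathcal{S}^2}$, with no direct estimate of the Jacobian $|A_n|$ needed, which upgrades the non-normalized principle to the announced one with rate $I_{E,\mathcal{S}^2}-\inf I_{E,\mathcal{S}^2}$; in the real case the $k$-dependent part of $Z_{n,k}$ is exactly $\tfrac{k!(n-2k)!}{2^k}$ up to the $k$-independent factor $|A_n|$, so Proposition~\ref{limits} still gives $\tfrac1{\beta_n}\big(\log\max_k Z_{n,k}-\log\min_k Z_{n,k}\big)\to 0$ and the real large deviation principle with good rate function $\tilde I_{E,\mathcal{S}^2}$ follows as in Section~\ref{getridofZ}. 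I do not expect a serious obstacle: the only point that genuinely has to be checked rather than copied is the Bernstein--Markov inequality above (and the trivial non-thinness of $\mathcal{S}^2$), while the lower bound and the treatment of the Jacobian come for free from the Kac machinery.
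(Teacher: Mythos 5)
Your proof follows essentially the same route as the paper: the Bernstein--Markov bound via Cauchy--Schwarz on the orthonormal-basis coefficients is the paper's reproducing-kernel computation in different packaging (the kernel being $K_n(z,w)=(n+1)\sum_k\binom{n}{k}z^k\bar w^k$ with $K_n(z,z)=(n+1)(1+|z|^2)^n$), and the non-thinness of $\mathcal{S}^2$, the unchanged Coulomb-gas lower bound, and the treatment of the normalizing constants via the complex-case asymptotics of $Z_n$ all match Section~4. One small imprecision worth fixing: for the real case it is not enough to note that $\tfrac1{\beta_n}(\log\max_k Z_{n,k}-\log\min_k Z_{n,k})\to 0$; you need to identify the common limit $\lim_n\tfrac1{\beta_n}\log Z_{n,k}=-\tfrac12\inf I_{E,\mathcal{S}^2}$, which comes from reading off $\tfrac1{\beta_n}\log|A_n|\to\tfrac12\inf I_{E,\mathcal{S}^2}$ from the complex-case relation $Z_n=\pi^{n-1}/(n!\,|A_n|^2)$, exactly as in \eqref{limdetE} and \eqref{assumption} — this is what the argument of Section~\ref{getridofZ} actually consumes.
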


\begin{definition}
We define $J_E: \mathcal{M}_1(\CC) \rightarrow \RR $ by:
\begin{equation}
J_E(\mu)= \sup_{z\in \CC} \{ \int \log |z-w|^2 d\mu(w) - \log(1+|z|^2) \}
\end{equation}
and $J_{E,\mathcal{S}^2}: \mathcal{M}_1(\mathcal{S}^2) \rightarrow \RR$ by:
\begin{equation}
J_{E,\mathcal{S}^2}(\mu)= \sup_{x \in \mathcal{S}^2} \int \log|x-y|^2 d\mu(y).
\end{equation}
\end{definition}

\subsection{Step 3: Proof of the large deviation principles}

\begin{proposition}[Rate function $I_{E,\mathcal{S}^2}$] \label{ratefunctionE}
	\item
	1) The function $J_{E,\mathcal{S}^2}$ is a continuous function for the weak topology of $\mathcal{M}_1(\mathcal{S}^2)$ and is bounded.
	\item
	2) The function $I_{E,\mathcal{S}^2}$ is well defined on $\mathcal{M}_1(\mathcal{S}^2)$, takes its values in $[0,\infty]$ and is finite as soon as the logarithmic energy is finite.
	\item
	3) $I_{E,\mathcal{S}^2}$ is lower semi-continuous.
	\item
	4) $I_{E,\mathcal{S}^2}$ is strictly convex.
\end{proposition}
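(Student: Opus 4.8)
The plan is to reproduce, almost verbatim, the proof of Proposition~\ref{ratefunction}, replacing the equator $T(S)$ by the full sphere $\mathcal{S}^2$ everywhere. The first remark is that this substitution only makes things easier: the non-thinness hypothesis used in that proof is now automatic, since $\mathcal{S}^2$ is connected and hence non-thin at each of its points by \cite[Theorem~3.8.3]{ransford1995potential}, and there is no longer any need to intersect level sets with a proper closed subset. Throughout, write $J_{E,\mathcal{S}^2}(\mu)=\sup_{x\in\mathcal{S}^2}\bigl(-2U^{\mu}_{\mathcal{S}^2}(x)\bigr)$.

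For item~1) I would first prove that $J_{E,\mathcal{S}^2}$ is continuous. Truncating by $\log_M(t)=\log(t)\vee(-M)$, the map $(x,\mu)\mapsto\int\log_M|x-y|^2\,d\mu(y)$ is jointly continuous, because $(x,y)\mapsto\log_M|x-y|$ is uniformly continuous on the compact set $\mathcal{S}^2\times\mathcal{S}^2$; hence $J^M_{E,\mathcal{S}^2}(\mu)=\sup_{x\in\mathcal{S}^2}\int\log_M|x-y|^2\,d\mu(y)$ is continuous on $\mathcal{M}_1(\mathcal{S}^2)$, being a supremum of a continuous function over a compact set. The integrands decrease pointwise to $\int\log|x-y|^2\,d\mu(y)$ (monotone convergence), so the elementary lemma on decreasing sequences of continuous functions on a compact set --- established inside the proof of Proposition~\ref{ratefunction} --- gives $J^M_{E,\mathcal{S}^2}\downarrow J_{E,\mathcal{S}^2}$ pointwise, so that $J_{E,\mathcal{S}^2}=\inf_M J^M_{E,\mathcal{S}^2}$ is upper semicontinuous. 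For lower semicontinuity I would carry over the potential-theoretic argument of Proposition~\ref{ratefunction}: given $\mu_n\to\mu$ with $J_{E,\mathcal{S}^2}(\mu)$ finite, pick $x_0\in\mathcal{S}^2$ attaining the supremum, and set $A_\varepsilon=\{x\in\mathcal{S}^2:-2U^{\mu}_{\mathcal{S}^2}(x)\ge J_{E,\mathcal{S}^2}(\mu)-\varepsilon\}$, which is closed by upper semicontinuity of $-U^{\mu}_{\mathcal{S}^2}$. By fine continuity of the logarithmic potential, $\mathcal{S}^2\setminus A_\varepsilon$ is thin at $x_0$; hence if $A_\varepsilon$ had zero capacity it would also be thin at $x_0$ by \cite[Theorem~3.8.2]{ransford1995potential}, making $\mathcal{S}^2$ thin at $x_0$ --- impossible. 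So $A_\varepsilon$ has positive capacity and supports a measure $\nu$ of positive finite mass with continuous potential \cite[Chapter~1, Corollary~6.11]{safftotik}. Fubini's theorem then gives $\int -2U^{\mu_n}_{\mathcal{S}^2}\,d\nu=\int -2U^{\nu}_{\mathcal{S}^2}\,d\mu_n\to\int -2U^{\nu}_{\mathcal{S}^2}\,d\mu=\int -2U^{\mu}_{\mathcal{S}^2}\,d\nu\ge\bigl(J_{E,\mathcal{S}^2}(\mu)-\varepsilon\bigr)\nu(\mathcal{S}^2)$, while $\nu(\mathcal{S}^2)\,J_{E,\mathcal{S}^2}(\mu_n)\ge\int -2U^{\mu_n}_{\mathcal{S}^2}\,d\nu$; dividing by $\nu(\mathcal{S}^2)>0$, taking $\varliminf_n$ and then $\varepsilon\to0$ yields $\varliminf_n J_{E,\mathcal{S}^2}(\mu_n)\ge J_{E,\mathcal{S}^2}(\mu)$. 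Being lower and upper semicontinuous on the compact $\mathcal{M}_1(\mathcal{S}^2)$, $J_{E,\mathcal{S}^2}$ is continuous and hence bounded.

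Items~2)--4) are then formal. Since $\operatorname{diam}\mathcal{S}^2=1$, $\log|x-y|\le0$ on $\mathcal{S}^2$, so $\mathcal{E}_{\mathcal{S}^2}\ge0$; as $J_{E,\mathcal{S}^2}$ is bounded, $I_{E,\mathcal{S}^2}=\mathcal{E}_{\mathcal{S}^2}+J_{E,\mathcal{S}^2}$ is well defined on $\mathcal{M}_1(\mathcal{S}^2)$, bounded below, equal to $+\infty$ exactly when $\mathcal{E}_{\mathcal{S}^2}=+\infty$, and therefore finite precisely when the logarithmic energy is finite (so that $I_{E,\mathcal{S}^2}-\inf I_{E,\mathcal{S}^2}$ takes its values in $[0,\infty]$). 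For item~3), $\mathcal{E}_{\mathcal{S}^2}$ is the increasing limit of the continuous functionals $\nu\mapsto-\iint\log_M|x-y|\,d\nu(x)\,d\nu(y)$, hence lower semicontinuous, and $I_{E,\mathcal{S}^2}$ is lower semicontinuous as the sum of a lower semicontinuous and a continuous function. For item~4), the relations~\eqref{relations}--\eqref{relation2} give $\mathcal{E}_{\mathcal{S}^2}(\mu)=\mathcal{E}\bigl((T^{-1})^*\mu\bigr)+\int\log(1+|z|^2)\,d(T^{-1})^*\mu$ for every $\mu$ of finite energy, i.e.\ the classical logarithmic energy (strictly convex, \cite{deift}, \cite[Proposition~5.3.2]{hiaipetz}) plus a linear functional, hence $\mathcal{E}_{\mathcal{S}^2}$ is strictly convex; since $J_{E,\mathcal{S}^2}$ is a supremum of affine functionals of $\mu$, it is convex, and $I_{E,\mathcal{S}^2}$ is strictly convex.

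The only point that is not soft, exactly as in Proposition~\ref{ratefunction}, is the lower semicontinuity of $J_{E,\mathcal{S}^2}$, which requires the potential-theoretic input (thinness, and existence of a measure with continuous potential on a set of positive capacity); I expect this to be the main obstacle. It is, however, marginally more comfortable here than in the Kac case, since the supremum runs over the whole compact connected set $\mathcal{S}^2$: non-thinness comes for free and one argues directly with $A_\varepsilon$ rather than with $A_\varepsilon\cap T(S)$.
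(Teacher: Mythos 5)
Your proposal is correct and takes essentially the same approach as the paper: the paper's own proof is precisely the one-line reduction ``same as Proposition~\ref{ratefunction}, with $T(S)$ replaced by $\mathcal{S}^2$, which is non-thin at all its points,'' and you carry out exactly that substitution, correctly observing that non-thinness now comes for free so that only the potential-theoretic lower-semicontinuity step retains any content. The extra care you take about ``values in $[0,\infty]$'' (reading it through $I_{E,\mathcal{S}^2}-\inf I_{E,\mathcal{S}^2}$, consistent with the rate function in Theorem~\ref{LDPelliptique} and with the $(-\infty,\infty]$ actually established in the proof of Proposition~\ref{ratefunction}) is a reasonable reading of what the paper intends.
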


\begin{proof}[Proof of Proposition \ref{ratefunctionE}]
The proof is exactly the same as the proof of Proposition \ref{ratefunction}. We only have to check that $\mathcal{S}^2$ is a compact set in  $\mathcal{S}^2$, which is non-thin at all his points, which is true.
\end{proof}

\subsubsection{Large deviations upper bound.}
The only thing we need to import the proof of the large deviation principle for non-normalized measures in the Kac case is the Bernstein-Markov inequality that was crucial to prove the upper bound.

\begin{lemma}[Bernstein-Markov for elliptic polynomials]\label{BMelliptique}
	Let $n\in \NN$, then for all $P \in \CC_n[X]$ we have:
	$$ \sup_{\CC}\frac{|P(z)|^2}{(1+|z|^2)^{n}} \leq (n+1) \|P\|^2_{L^2}$$ where $\|P\|_{L^2}^2 = \int |P(z)|^2\frac{1}{(1+|z|^2)^{n}}\frac{d\ell_{\CC}(z)}{\pi(1+|z|^2)^2}$.
\end{lemma}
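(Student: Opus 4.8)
The plan is to mimic the argument used for Lemma~\ref{bernmarko} (the Bernstein--Markov inequality for $\nu_S$), replacing the orthonormal basis of the Kac scalar product by the orthonormal basis of the elliptic scalar product exhibited in Theorem~\ref{loielliptique}. Concretely, I would write any $P\in\CC_n[X]$ in the form
\[
P(z)=\sum_{k=0}^{n} c_k\,\sqrt{n+1}\,\binom{n}{k}^{1/2}z^k ,
\]
so that, by orthonormality of the family $\bigl(\sqrt{n+1}\binom{n}{k}^{1/2}X^k\bigr)_{0\le k\le n}$ for the scalar product of Theorem~\ref{loielliptique}, the Parseval identity gives $\|P\|_{L^2}^2=\sum_{k=0}^{n}|c_k|^2$.

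Next I would apply the Cauchy--Schwarz inequality to the sum defining $P(z)$, together with the binomial theorem:
\[
|P(z)|^2\le\Bigl(\sum_{k=0}^{n}|c_k|^2\Bigr)\Bigl(\sum_{k=0}^{n}(n+1)\binom{n}{k}|z|^{2k}\Bigr)
=(n+1)\,\|P\|_{L^2}^2\,(1+|z|^2)^n .
\]
Dividing both sides by $(1+|z|^2)^n$ and taking the supremum over $z\in\CC$ yields exactly the claimed bound $\sup_{\CC}\frac{|P(z)|^2}{(1+|z|^2)^n}\le(n+1)\|P\|_{L^2}^2$.

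An equivalent route, parallel to the reproducing‑kernel identity $P_n(z)=\int P_n(w)\sum_{k=0}^n z^k\bar w^k\,d\nu_S(w)$ used in the Kac case, would be to note that the reproducing kernel of $\CC_n[X]$ for the elliptic scalar product is $K(z,w)=(n+1)(1+z\bar w)^n$, so that $P(z)=\langle P,K(\cdot,z)\rangle$ and Cauchy--Schwarz gives $|P(z)|^2\le\|P\|_{L^2}^2\,K(z,z)=(n+1)\|P\|_{L^2}^2(1+|z|^2)^n$. Either way the computation is elementary; there is no real obstacle here, the only point requiring care being the correct normalisation constant $n+1$ coming from the norm computation $\int_{\CC}\frac{|z|^{2k}}{(1+|z|^2)^{n+2}}\,d\ell_{\CC}(z)=\pi(n+1)\binom{n}{k}^{-1}$ already established in the proof of Theorem~\ref{loielliptique}. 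This lemma then plays, for elliptic polynomials, the role that Lemma~\ref{bernmarko2} plays for Kac polynomials, and it is the only new ingredient needed to transport the large deviations upper bound of Section~3 to the elliptic setting.
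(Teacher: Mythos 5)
Your proof is correct and takes essentially the same approach as the paper: the paper's argument also uses the reproducing kernel $K_n(z,w)=(n+1)(1+z\bar w)^n$ together with the Cauchy--Schwarz inequality, which is exactly the second route you describe. Your first route (Parseval in the orthonormal basis $\sqrt{n+1}\binom{n}{k}^{1/2}X^k$, Cauchy--Schwarz on the finite sum of coefficients, then the binomial theorem) is a hands-on restatement of the same estimate, since $K_n(z,z)$ is precisely $\sum_k(n+1)\binom{n}{k}|z|^{2k}=(n+1)(1+|z|^2)^n$.
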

\begin{proof}[proof of Lemma \ref{BMelliptique}]
	Let $K_n(z,w)= \sum_{i=0}^{n} (n+1) \binom{n}{k} z^k \bar{w}^k$. Then we have:
\begin{equation*}
	 \forall P \in \CC_n[X], P(z)=\int P(w) K(z,w) \frac{1}{(1+|w|^2)^{n}}\frac{dw}{\pi(1+|w|^2)^2}=\langle P,K(. ,w)\rangle
\end{equation*}
	Then by the Cauchy-Schwarz inequality we get, for all $z \in \CC$:
\begin{equation*}
|P(z)|^2 \leq \|P\|_{L^2} \int |K_n(z,w)|^2 \frac{1}{(1+|w|^2)^{n}}\frac{dw}{\pi(1+|w|^2)^2}= \|P\|_{L^2} (n+1) K_n(z,z).
\end{equation*}
	Considering that $\|K_n(z,)\|^2_{L^2}=(n+1)K_n(z,z)= (n+1)(1+|z|^2)^n$ we get:
\begin{equation*}
\sup_{\CC} \frac{|P(z)|^2}{(1+|z|^2)^n}\leq (n+1) \|P\|^2_{L^2}.
\end{equation*}
\end{proof}
From this Bernstein-Markov inequality, we deduce the analogue for this model of Lemma \ref{bernmarko} and Lemma \ref{bernmarko2} and we can easily prove the upper bound for non-normalized measures for elliptic polynomials in both real and complex cases.

\subsubsection{Large deviations lower bound}

As we know that $J_{E,\mathcal{S}^2}$ is a continuous function in $\mathcal{M}_1(\mathcal{S}^2)$, the proof of the lower bound is exactly the same. The same inequalities hold and we can reduce the problem to the classical lower bound for a Coulomb gas with confining potential $\log(1+|z|^2)$.

\subsubsection{Large deviation principles for normalized measures.}
In the complex case, we use the same trick of using the inequalities for the whole space (see \eqref{trick}) to obtain:
\begin{equation*}
\varliminf_{n\to\infty} \frac{1}{\beta_n} \log Z_n = \varlimsup_{n\to\infty} \frac{1}{\beta_n} \log Z_n  = - \inf I_{E,\mathcal{S}^2}
\end{equation*}
so we obtain the full large deviation principle for normalized measures in $\mathcal{M}_1(\mathcal{S}^2)$.
Due to the definition of $Z_n$ given in Theorem \ref{loielliptique}, we have:
\begin{equation} \label{limdetE}
\varliminf_{n\to\infty} \frac{1}{\beta_n} \log |A_n|^2 = \varlimsup_{n\to\infty} \frac{1}{\beta_n} \log |A_n|^2 = - \inf I_{E,\mathcal{S}^2}.
\end{equation}
In the real case, we need a uniform estimate of the $Z_{n,k}$. Thanks to the formula given in Theorem \ref{loielliptique} and the limits given in Proposition \ref{limits}, we notice that equation \eqref{limdetE} implies that:
\begin{equation}\label{assumption}
\lim_{n\to\infty} \frac{1}{\beta_n} \min Z_{n,k} = \lim_{n\to\infty} \frac{1}{\beta_n} \max  Z_{n,k} = \lim_{n\to \infty} \frac{1}{\beta_n}\log |A_n| = -\frac{1}{2} \inf I_{E,\mathcal{S}^2}
\end{equation}
and this allows us to prove the large deviation principle in the real case for normalized measures like in \eqref{trick2}. For general $\beta_n$ we need to assume that \eqref{assumption} is true as we canot obtain a uniform controlin $k$ of the constants $Z_{n,k}$. To control the constants $Z_n,k$ we use \eqref{limdetE} which comes from the analysis of the complex case. This proof relies the explicit formulae for the constants $Z_{n,k}$ which are not available in general.

\subsection{Step 4: Going back on the plane.}
The only thing to check is that the rate function given by the contraction principle is the rate function that was announced in the theorem. Using the relations \eqref{relations} and \eqref{relation2} in the definition of the rate function $I_{E,\mathcal{S}^2}$ easily ends the proof.

We end the proof of the large deviations principles as in Section \ref{getridofZ}, using the uniform estimates on the $Z_{n,k}$.

\section{General result of Zeitouni and Zelditch}
In this section, we give the general statement of the result obtained by Zeitouni and Zelditch in \cite{zeitounizelditch} and we extend it to the case of real coefficients. We deal with the gases \eqref{gazgeneral} and \eqref{gazgeneral2} associated to the orthogonal polynomials \eqref{polyortho}.
\subsection{Step 1: Distribution of the roots}

\begin{theorem}[Distribution of the roots of $P_n$]\label{generalroots}
In the complex case, the distribution of the random vector $(z_1,\dots,z_n)$ is:
\begin{equation*}
\frac{1}{Z_n}\exp \left(-\beta_n \left[ \mathcal{E}_{\neq}(\mu_n) + \frac{n+1}{n^2}\log \int \prod_{i=1}^{n}|z-z_i|^2 e^{-n\phi(z)}d\nu(z) \right]\right)d\ell_{\CC^n}(z_1,\dots,z_n)
\end{equation*}
where $$Z_n = \frac{\pi^{n-1}}{n! |A_n|^2}$$ and $|A_n|^2$ is the Jacobian of the change of variables from the canonical basis of $\CC_n[X]$ and the orthonormal basis $(R_O,\dots,R_n)$.

When the polynomials $R_0, \dots, R_n$ have real coefficients, in the real case, the distribution of $(z_1,\dots,z_n)$ is given by:
\begin{equation*}
\sum_{k=0}^{\lfloor n/2 \rfloor} \frac{1}{Z_{n,k}}  \exp\left( -\beta_n\frac{1}{2}H_O(z_1,\dots,z_n)\right) \\   d\ell_{n,k}(z_1,\dots,z_n)
\end{equation*}
where
\begin{equation*} Z_{n,k}=\frac{k!(n-2k)!\pi^{\frac{n+1}{2}}}{2^k \Gamma(\frac{n+1}{2}) |A_n|}.
\end{equation*}
\end{theorem}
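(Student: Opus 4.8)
The plan is to repeat, in this abstract setting, the two changes of variables already used for Kac and elliptic polynomials (Theorem \ref{the:rootsexpo} and Theorem \ref{loielliptique}): the passage from the roots together with the leading coefficient to the monomial coefficients, and the linear passage from the monomial basis to the orthonormal basis $(R_0,\dots,R_n)$. The only genuinely new object is the Jacobian of this linear map, which is abbreviated $|A_n|^2$ in the complex case and $|A_n|$ in the real case, exactly as for elliptic polynomials.

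\textbf{Complex case.} First I would write $P_n=\sum_{k=0}^n a_k R_k=c_n\prod_{i=1}^n(z-z_i)$, where $c_n$ is the leading coefficient of $P_n$ in the monomial basis; $c_n$ differs from $a_n$ only by the fixed nonzero scalar $\mathrm{lead}(R_n)$, which is absorbed into $Z_n$. Factor the map $G:(z_1,\dots,z_n,a_n)\mapsto(a_0,\dots,a_n)$ as the roots-and-leading-coefficient to monomial-coefficients map (real Jacobian $|a_n|^{2n}\prod_{i<j}|z_i-z_j|^2$, by \cite[Lemma 1.1.1]{houghtkris}) composed with the linear change of basis from monomial to $R$-coordinates (real Jacobian $|A_n|^{-2}$, up to the above scalar). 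Hence $\mathrm{Jac}(G)$ equals, up to a harmless constant, $|A_n|^2|a_n|^{2n}\prod_{i<j}|z_i-z_j|^2$. The law of $(a_0,\dots,a_n)$ has density $\pi^{-(n+1)}\exp(-\sum_{k=0}^n|a_k|^2)$, and since $(R_k)$ is orthonormal for \eqref{produitscal},
\[
\sum_{k=0}^n |a_k|^2=\|P_n\|_{L^2}^2=|a_n|^2\int\prod_{i=1}^n|z-z_i|^2 e^{-n\phi(z)}\,d\nu(z).
\]
Substituting, the law of $(z_1,\dots,z_n,a_n)$ has density proportional to $|a_n|^{2n}\prod_{i<j}|z_i-z_j|^2\exp\!\left(-|a_n|^2\int\prod_{i=1}^n|z-z_i|^2 e^{-n\phi(z)}\,d\nu(z)\right)$. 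Integrating out $a_n\in\CC$ via $\int_\CC|a|^{2n}e^{-c|a|^2}\,d\ell_\CC(a)=\pi\,n!\,c^{-(n+1)}$, with $c=\int\prod_{i=1}^n|z-z_i|^2 e^{-n\phi(z)}\,d\nu(z)$, yields the announced density with $Z_n=\pi^{n-1}/(n!\,|A_n|^2)$; rewriting the $c^{-(n+1)}$ factor as an exponential through \eqref{hamiltonianortho} gives the Gibbs form.

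\textbf{Real case.} Now the $a_k$ are real Gaussians $\mathcal{N}(0,\tfrac12)$, so the density of $(a_0,\dots,a_n)$ is $\pi^{-(n+1)/2}\exp(-\sum_{k=0}^n a_k^2)$, and the roots consist of $n-2k$ real points and $k$ conjugate pairs. I would invoke Zaporozhets' computation \cite{zaporozhets} for the joint law of $(z_1,\dots,z_n,a_n)$ in terms of the monomial coefficients — this is exactly what produced the combinatorial weights $2^k/(k!(n-2k)!)$ in Theorem \ref{the:rootsexpo} — then insert the linear change of basis to $(R_0,\dots,R_n)$ (real Jacobian $|A_n|$, as for elliptic polynomials), replace $\sum_k a_k^2$ using \eqref{produitscal} and orthonormality as above, and integrate out $a_n$ over $\RR$ using $\int_\RR |a|^n e^{-ca^2}\,da=\Gamma(\tfrac{n+1}{2})\,c^{-(n+1)/2}$ (which simultaneously produces the exponent $(n+1)/2$ and the factor $\Gamma(\tfrac{n+1}{2})$). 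Collecting the $\pi^{(n+1)/2}$ from the Gaussian normalisation and $|A_n|$ from the change of basis gives the stated mixture with $Z_{n,k}=k!(n-2k)!\pi^{(n+1)/2}/(2^k\Gamma(\tfrac{n+1}{2})|A_n|)$, and the Hamiltonian form follows from the definition of $H_O$.

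\textbf{Main obstacle.} The analytic work is routine; the delicate part is the bookkeeping of multiplicative constants — the scalar $\mathrm{lead}(R_n)$, the appearance of $|A_n|^2$ versus $|A_n|$ in the two settings, and the precise combinatorial and $\Gamma$-function factors carried over from Zaporozhets' formula — so that $Z_n$ and $Z_{n,k}$ come out exactly as stated. One should also record that $c=\int\prod_{i=1}^n|z-z_i|^2 e^{-n\phi}\,d\nu$ is finite and strictly positive for $\nu^{\otimes n}$-a.e. configuration (finite since $\phi$ is continuous and $\nu$ a probability measure; positive since $\nu$ is non-polar, so not supported on a finite set), which is what makes the $a_n$-integrals converge.
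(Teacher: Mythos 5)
Your proposal is correct and follows essentially the same route as the paper: factor $G$ through the roots-and-leading-coefficient to monomial-coefficients map and the linear change of basis with Jacobian $|A_n|^2$ (resp.\ $|A_n|$), use orthonormality to rewrite $\sum_k|a_k|^2=\|P_n\|_{L^2}^2$, and integrate out the last coefficient over $\CC$ (resp.\ $\RR$) to produce the stated $Z_n$ and $Z_{n,k}$. The only addition is your explicit tracking of the scalar $\mathrm{lead}(R_n)$, which the paper silently absorbs into the constant; this is a harmless refinement, not a different argument.
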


\begin{proof}[Proof of Theorem \ref{generalroots}]
The proof is the same as the proof of Theorem \ref{loielliptique}. We consider $G(z_1,\dots,z_n,a_n)= (a_0,\dots,a_n)$ where the $a_i$ are the coefficients in the orthonormal basis. 
Then we use the same decomposition:
$$\xymatrix{ (z_1,\dots,z_n,a_n) \ar[rr]^{G} \ar[rd]_U && (a_0,\dots,a_n)  \\ & (b_0,\dots,b_n) \ar[ru]_V }$$
and the same calculation holds to obtain: $$|\mathrm{Jac}(G)|^2= |A_n|^2 |a_n|^{2n} \prod_{i<j}|z_i-z_j|^2$$
where $|A_n|^2$ is the real Jacobian determinant of the change of basis of $\CC_n[X]$.
In the real case, when the $R_k$'s are real polynomials, we can also do the same calculations, using \cite{zaporozhets}.
\end{proof}

\subsection{Step 2: Large deviations on the sphere}

\begin{proposition}[Pushing orthogonal polynomials on the sphere]
	Let $(z_1,\dots,z_n)$ be the zeros of $P_n$ in the complex case, then the law of $(T(z_1),\dots,T(z_n))$ is absolutely continuous with respect to the push forward by $T$ \eqref{stereo} of the Lebesgue measure on $\CC$ with density:
	
	$$\frac{\prod_{i<j} |x_i-x_j|^2}{(\int \prod_{j=1}^n |x-x_j|^2 e^{-n\tilde{\phi}(x)} dT^*\nu(x))^{n+1}}  \times \prod_{i=1}^{n} (1-|x_i|^2)^2.$$
where $\tilde{\phi}(x)= \phi(T^{-1}(x))  +\log(1-|x|^2)$. If $\kappa_n$ is defined as in Proposition \ref{pushcomplexkac} then we can write this law in the form:
	$$\frac{1}{Z_n}\exp \left(-\beta_n \left[  \mathcal{E}_{\neq}(\bar{\mu}_n) + \frac{n+1}{n^2}\log \int \prod_{j=1}^n |x-x_j|^2 e^{-n\tilde{\phi}(x)} dT^*\nu(x)\right]\right) d\kappa_n. $$

\end{proposition}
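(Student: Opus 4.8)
The plan is to repeat, essentially verbatim, the argument of Proposition~\ref{pushcomplexkac}. I would start from the density on $\CC^n$ given by Theorem~\ref{generalroots}: in the complex case $(z_1,\dots,z_n)$ has density $\frac{1}{Z_n}\prod_{i<j}|z_i-z_j|^2\big(\int\prod_{i=1}^n|z-z_i|^2e^{-n\phi(z)}d\nu(z)\big)^{-(n+1)}$ with respect to $\ell_{\CC^n}$. Since $T$ is a bijection, the image under $T$ of a density $f$ against a measure $\mu$ is the density $f\circ T^{-1}$ against $T^*\mu$, so the whole task reduces to rewriting this expression in the variables $x_i=T(z_i)$ using the identity \eqref{relations}.

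First I would transform the Vandermonde, obtaining from \eqref{relations}
\[
\prod_{i<j}|z_i-z_j|^2=\frac{\prod_{i<j}|T(z_i)-T(z_j)|^2}{\prod_{i=1}^n(1-|T(z_i)|^2)^{\,n-1}}.
\]
Then I would transform the confinement integral: applying \eqref{relations} with second variable $z_i$ gives $\prod_{i=1}^n|z-z_i|^2=(1-|T(z)|^2)^{-n}\big(\prod_{i=1}^n(1-|T(z_i)|^2)\big)^{-1}\prod_{i=1}^n|T(z)-T(z_i)|^2$, and the change of variables $x=T(z)$ turns $\int(\cdot)\,d\nu$ into $\int(\cdot)\,dT^*\nu$. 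The one point requiring care — the main obstacle, such as it is — is the bookkeeping here: the factor $(1-|T(z)|^2)^{-n}$ has to be absorbed into the weight, and this is precisely what forces the definition $\tilde\phi(x)=\phi(T^{-1}(x))+\log(1-|x|^2)$, since $e^{-n\tilde\phi(x)}=(1-|x|^2)^{-n}e^{-n\phi(T^{-1}(x))}$. One thus gets
\[
\int\prod_{i=1}^n|z-z_i|^2e^{-n\phi(z)}d\nu(z)=\frac{1}{\prod_{i=1}^n(1-|T(z_i)|^2)}\int\prod_{i=1}^n|x-T(z_i)|^2e^{-n\tilde\phi(x)}dT^*\nu(x).
\]

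Raising the last display to the power $n+1$ and dividing the transformed Vandermonde by it, the powers of $(1-|T(z_i)|^2)$ collect to $(1-|T(z_i)|^2)^{(n+1)-(n-1)}=(1-|T(z_i)|^2)^2$ per point, which is the factor $\prod_{i=1}^n(1-|x_i|^2)^2$ of the statement and, after relabelling, the measure $\kappa_n$. Finally, since $\beta_n=n^2$, I would write $\prod_{i<j}|x_i-x_j|^2=\exp(-\beta_n\mathcal{E}_{\neq}(\bar{\mu}_n))$ and the confinement power as $\exp\big(-\beta_n\tfrac{n+1}{n^2}\log\int\prod_{j=1}^n|x-x_j|^2e^{-n\tilde\phi(x)}dT^*\nu(x)\big)$, which puts the density against $\kappa_n$ into the announced Gibbs form with the same $Z_n$. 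As the computation is purely algebraic and, once $\tilde\phi$ has been identified, word for word that of Proposition~\ref{pushcomplexkac}, I would just refer to that proof and omit the repetition.
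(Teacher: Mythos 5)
Your proof is correct and follows exactly the route the paper (implicitly) uses: start from the density on $\CC^n$ from Theorem~\ref{generalroots}, apply the identity \eqref{relations} to both the Vandermonde and the confinement integral, absorb the extra $(1-|x|^2)^{-n}$ from the sphere metric into the weight via $\tilde\phi$, and observe that the surviving powers $(1-|T(z_i)|^2)^{(n+1)-(n-1)}=(1-|T(z_i)|^2)^2$ give the factor defining $\kappa_n$. The bookkeeping (the exponent $n-1$ from the pair count, the $e^{-n\tilde\phi}=(1-|x|^2)^{-n}e^{-n\phi\circ T^{-1}}$ identity, and the rewriting of the Gibbs form using $\beta_n=n^2$) is all accurate, so there is nothing to add.
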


\begin{proposition}[Pushing the real case on the sphere]
	Let $(z_1,\dots,z_n)$ be the zeros of $P_n$ in the real case, then the law of $(T(z_1),\dots,T(z_n))$ is:
\begin{multline*}
\sum_{k=0}^{\lfloor n/2 \rfloor} \frac{1}{Z_{n,k}} \frac{ \prod_{i<j}|x_i-x_j|\times \prod_{i=1}^{n} (1-|x_i|^2)}{ (\int \prod_{j=1}^n |x-x_j|^2 e^{-n\tilde{\phi}(x)} dT^*\nu(x))^{(n+1)/2}}  dL_{n,k}(x_1,\dots,x_n).
\end{multline*}
If we define $\rho_{n,k}$ as in Proposition \ref{pushreelkac}, this distribution can be written:
\begin{multline*}
	\sum_{k=0}^{\lfloor n/2 \rfloor} \frac{1}{Z_{n,k}}  \exp \left( -\beta_n\left[ \frac{1}{2}\mathcal{E}_{\neq}(\bar{\mu}_n)  -\frac{n+1}{2n^2} \log \int \prod_{j=1}^n |x-x_j|^2 e^{-n\tilde{\phi}(x)} dT^*\nu(x) \right]\right) d\rho_{n,k}.
	\end{multline*}
\end{proposition}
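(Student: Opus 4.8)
The plan is to follow verbatim the computations of Propositions~\ref{pushcomplexkac} and~\ref{pushreelkac}, starting from the real-case density of the roots given in Theorem~\ref{generalroots} and transporting every factor through the inverse stereographic projection~$T$ by means of the identities~\eqref{relations} and~\eqref{relation2}. The only model-dependent ingredient is the weight $e^{-n\phi(z)}$ sitting inside the confinement integral, and this is exactly why $\tilde{\phi}$ was defined by $\tilde{\phi}(x)=\phi(T^{-1}(x))+\log(1-|x|^2)$: it makes the combination $(1-|T(z)|^2)^{-n}e^{-n\phi(z)}$ equal to $e^{-n\tilde{\phi}(T(z))}$.

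First I would rewrite the Vandermonde-type factor. By~\eqref{relations}, and since each factor $1-|T(z_i)|^2$ occurs in exactly $n-1$ of the pairs $\{i,j\}$,
\[
\prod_{i<j}|z_i-z_j|=\frac{\prod_{i<j}|T(z_i)-T(z_j)|}{\prod_{i=1}^{n}(1-|T(z_i)|^2)^{(n-1)/2}}.
\]
Next I would transform the confinement integral. Again by~\eqref{relations},
\[
\int \prod_{i=1}^{n}|z-z_i|^2 e^{-n\phi(z)}d\nu(z)=\frac{1}{\prod_{i=1}^{n}(1-|T(z_i)|^2)}\int\prod_{i=1}^{n}|T(z)-T(z_i)|^2\,(1-|T(z)|^2)^{-n}e^{-n\phi(z)}d\nu(z),
\]
and replacing $(1-|T(z)|^2)^{-n}e^{-n\phi(z)}$ by $e^{-n\tilde{\phi}(T(z))}$ and pushing the integral forward by $T$ shows it equals $\bigl(\prod_{i=1}^{n}(1-|T(z_i)|^2)\bigr)^{-1}\int\prod_{i=1}^{n}|x-T(z_i)|^2 e^{-n\tilde{\phi}(x)}dT^*\nu(x)$. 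Raising to the power $(n+1)/2$ then contributes a factor $\prod_{i}(1-|T(z_i)|^2)^{(n+1)/2}$ in the denominator.

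Combining the two computations, the net power of $\prod_{i}(1-|T(z_i)|^2)$ appearing in the density is $-(n-1)/2+(n+1)/2=1$, so writing $x_i=T(z_i)$ the law of $(T(z_1),\dots,T(z_n))$ with respect to $dL_{n,k}$ is exactly the announced expression, the constants $Z_{n,k}$ being unaffected since they are mere multiplicative scalars. Absorbing the single surviving factor $\prod_{i}(1-|x_i|^2)$ into $\rho_{n,k}$ and recognising $-\frac{1}{n^2}\sum_{i\neq j}\log|x_i-x_j|=\mathcal{E}_{\neq}(\bar{\mu}_n)$ yields the Gibbs form; this is word for word the proof of Propositions~\ref{pushcomplexkac} and~\ref{pushreelkac}. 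I anticipate no genuine obstacle: the computation is structurally identical to the Kac and elliptic cases already treated, and the only point demanding care is the exponent bookkeeping for $1-|T(z_i)|^2$, the cancellation $\tfrac{n+1}{2}-\tfrac{n-1}{2}=1$ being precisely what leaves one power of $\prod(1-|x_i|^2)$, namely the one carried by the reference measure $\rho_{n,k}$.
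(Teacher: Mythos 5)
Your proof is correct and is exactly the route the paper intends: push each factor through $T$ via \eqref{relations}, \eqref{relation2} and the definition of $\tilde{\phi}$, with the exponent bookkeeping $-(n-1)/2+(n+1)/2=1$ leaving the single surviving power of $\prod_i(1-|x_i|^2)$ that is absorbed into $\rho_{n,k}$, precisely as in the paper's (omitted, ``same as Kac'') argument. One minor remark: your computation correctly yields $+\tfrac{n+1}{2n^2}\log\int$ inside the bracket, consistent with $\exp(-\beta_n\tfrac{1}{2}H_O)$; the minus sign printed in the statement (and repeated in the other real-case pushforward propositions) is a typo in the paper, not a flaw in your argument.
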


\begin{definition}
	We define $J_O: \mathcal{M}_1(\CC) \rightarrow \RR $ by:
	\begin{equation}
		J_O(\mu)= \sup_{z\in \CC} \{ \int \log |z-w|^2 d\mu(w) - \phi(z) \}
	\end{equation}
	and $J_{O,\mathcal{S}^2}: \mathcal{M}_1(\mathcal{S}^2) \rightarrow \RR$ by:
	\begin{equation}
		J_{O,\mathcal{S}^2}(\mu)= \sup_{x \in \mathcal{S}^2}  \{\int \log|x-y|^2 d\mu(y)-\tilde{\phi}(x)\}.
	\end{equation}
\end{definition}

\subsection{Steps 3 and 4}
\begin{itemize}
\item \emph{Rate function.} If we look at the proof of the large deviation principles for Kac polynomials and elliptic polynomials on the sphere, we see that the good definition of the rate function relies on the continuity of the function $J_{O,\mathcal{S}^2}$.  The proof of the continuity of this function is the same as in the previous cases under the assumptions that the support of $T^*\nu$ non-thin at all its points, which is one of our hypothesis. We replace the function $-U^{\mu}$ by the function $-U^{\mu} +\bar{\phi}$. Note that the set $A_{\varepsilon}$ would be replaced in general by the set:
\[	A_{\varepsilon}= \{ x \in \mathcal{S}^2 \mid -2 U^{\mu}(x) + \bar{\phi} \geq J_{O, \mathcal{S}^2}(\mu) - \varepsilon \}.\]

\item \emph{Upper Bound.} The proof of the large deviations upper bound relies on the Bernstein-Markov property \eqref{bersteinmarkov}, which is assumed to be true.

\item\emph{Lower Bound.} As $J_{O,\mathcal{S}^2}$ is continuous, we can reproduce exactly the proof of the lower bound for Kac polynomials.
\end{itemize}
In order to prove the large deviation principle for the normalized measures, we use the same technique as in \eqref{trick} and \eqref{limdetE}. We prove asymptotics for $Z_n$ and we deduce a uniform control over the constants $Z_{n,k}$. This control is necessary to mimic the work of Section \ref{getridofZ}.
Using the large deviations principle for non-normalized measures with the set $\mathcal{M}_1(\mathcal{S}^2)$, we have:
\begin{equation}
\varliminf_{n\to\infty} \frac{1}{\beta_n} \log Z_n = \varlimsup_{n\to\infty} \frac{1}{\beta_n} \log Z_n  = - \inf I_{O,\mathcal{S}^2}.
\end{equation}
Hence, doing exactly as in  we obtain the full large deviation principle for normalized measures in $\mathcal{M}_1(\mathcal{S}^2)$.
We also notice that, due to the definition of $Z_n$ given in Theorem \ref{generalroots}, we have:
\begin{equation} \label{limdetO}
\varliminf_{n\to\infty} \frac{1}{\beta_n} \log |A_n|^2 = \varlimsup_{n\to\infty} \frac{1}{\beta_n} \log |A_n|^2 = - \inf I_{O,\mathcal{S}^2}.
\end{equation}
Remembering the definition of $Z_{n,k}$ from Theorem \ref{generalroots} and the estimates given in Proposition \ref{limits}, we have:
\begin{equation}\label{asssumption2}
\lim_{n\to\infty} \frac{1}{\beta_n} \min Z_{n,k} = \lim_{n\to\infty} \frac{1}{\beta_n} \max  Z_{n,k} = -\frac{1}{2} \inf I_{O,\mathcal{S}^2}.
\end{equation}
This allows us to prove the large deviation principle for normalized measures as in Section \ref{getridofZ}. For general $\beta_n$ we need to assume a uniform control over the constants $Z_{n,k}$ given in \eqref{asssumption2}.

We end the proof of the large deviations principle exactly in the same way as we did in Section \ref{getridofZ}.

Once the large deviation principle proved in $\mathcal{M}_1(\mathcal{S}^2)$, we can prove Theorem \ref{LDPgeneral} and Theorem \ref{LDPgeneralreel}.

\begin{proof}[Proof of Theorem \ref{LDPgeneral}]
	Thanks to the inclusion principle \cite[Lemma 4.1.5]{dembozeitouni}, the random sequence $(T^*\mu_n)_{n\in \NN}$ satisfies a large deviation principle in $$\{ \mu \in \mathcal{M}_1(\mathcal{S}^2) \mid \mu(\{N\})=0\}$$ with speed $\beta_n$ and good rate function $I_{O,\mathcal{S}^2}$. Then by the contraction principle \cite[Theorem 4.2.1]{dembozeitouni} along $T^{-1}$, the sequence $(\mu_n)_{n \in \NN}$ satisfies a large deviation principle with the same speed and good rate function $I_O$ thanks to Proposition \ref{equivalence}. The contraction principle ensures that the function $I_O$ is a good rate function as $I_{O,\mathcal{S}^2}$ is a good rate function.
\end{proof}

\begin{proof}[Proof of Theorem \ref{LDPgeneralreel}]
	In the real case, the proof is exactly the same as Theorem \ref{LDPgeneral}. We use the inclusion principle and the contraction principle to obtain a large deviation principle with speed $\beta_n$ and good rate function $\tilde{I}_O$ (using again Proposition \ref{equivalence}).
\end{proof}

\section{Acknowledgements}
We would like to thank Ofer Zeitouni for his very helpful comments and remarks, as well as Djalil Chafaï for his valuable help in the construction of this article.
\bibliographystyle{alpha} 
\bibliography{biblio} 
\end{document}